\newtheorem{theorem}{Theorem}
\newtheorem{lemma}{Lemma}
\newtheorem{remark}{Remark}
\newtheorem{proposition}{Proposition}
\newtheorem{conjecture}{Conjecture}
\renewcommand*\backref[1]{}
\renewcommand*\backrefalt[4]{ \ifcase #1 \or (cited on page #2) \else (cited on pages #2) \fi}
\newcommand{\be}{\begin{equation}}
\newcommand{\ee}{\end{equation}}
\newcommand{\bea}{\begin{eqnarray}}
\newcommand{\eea}{\end{eqnarray}}
\newcommand{\vs}{\vspace{0.5cm}}
\def\XXint#1#2#3{{\setbox0=\hbox{$#1{#2#3}{\int}$ }
\vcenter{\hbox{$#2#3$ }}\kern-.6\wd0}}
\begin{document}

\title[On Gauduchon K\"ahler-like manifolds]{On Gauduchon K\"ahler-like manifolds }
	
\author{Quanting Zhao}
\address{Quanting Zhao. School of Mathematics and Statistics \&
Hubei Key Laboratory of Mathematical Sciences, Central China Normal
University, Wuhan, 430079, P.R.China.} \email{zhaoquanting@126.com;zhaoquanting@mail.ccnu.edu.cn}
\thanks{Zhao is partially supported by National Natural Science Foundations of China with the grant No.11801205.
Zheng is partially supported by National Natural Science Foundations of China
with the grant No.12071050 and Chongqing Normal University.}

\author{Fangyang Zheng}
\address{Fangyang Zheng. School of Mathematical Sciences, Chongqing Normal University, Chongqing 401331, China}
\email{20190045@cqnu.edu.cn} \thanks{}

\subjclass[2010]{53C55 (primary), 53C05 (secondary)}
\keywords{K\"ahler-like; Gauduchon connections; Chern connection; Riemannian connection; Strominger connection.}

\begin{abstract}
In a paper by Angella, Otal, Ugarte, and Villacampa, the authors conjectured that on a compact Hermitian manifold, if a Gauduchon connection other than Chern or Strominger is K\"ahler-like, then the Hermitian metric must be K\"ahler. They also conjectured that if two Gauduchon connections are both K\"ahler-like, then the metric must be K\"ahler. In this paper, we discuss some partial answers to the first conjecture, and give a proof to the second conjecture. In the process, we discovered an interesting `duality' phenomenon amongst Gauduchon connections, which seems to be intimately tied to the question, though we do not know if there is any underlying reason for that from physics.
\end{abstract}

\maketitle

\tableofcontents

\markleft{Quanting Zhao and Fangyang Zheng}
\markright{Gauduchon K\"ahler-like}

\section{Introduction and statement of results}\label{intro}

The study of non-K\"ahler geometry has generated a lot of interests in recent years, partly due to the need from physics (see for instance \cite{GHR, IP, Strominger,  TsengYau}). As a sample, we refer the readers to the work \cite{AI, AU, Belgun, Belgun1, Gauduchon, Gauduchon1, Bismut}, \cite{EFV, FinoTomassini, FinoVezzoni, OV, Popovici}, \cite{Fu, FuYau, FuLiYau, LiYau}, \cite{LiuYang, LiuYang1, LiuYang2}, \cite{S, STW, T}, \cite{KYZ, VYZ, WYZ, Zheng}, and the references therein for more discussions. Here we concern ourselves with a specific topic: the rigidity problem for K\"ahler-like Gauduchon connections.

Given a Hermitian manifold $(M^n,g)$, there are three distinguished metric connections that are well-studied: the Riemannian (Levi-Civita) connection $\nabla$, the Chern connection $\nabla^c$, and the Strominger connection $\nabla^s$ (also known as the Bismut connection). When $g$ is K\"ahler, all three coincides, and when $g$ is not K\"ahler, they are mutually distinct.

The line joining $\nabla^c$ and $\nabla^s$ gives a one-parameter family of canonical Hermitian connections known as the {\em Gauduchon connections} \cite{Gauduchon1}, denoted by
\begin{equation}
D^{r} = \frac{1+r}{2}\nabla^c+   \frac{1-r}{2} \nabla^s, \ \ \ {r} \in {\mathbb R}.
\end{equation}
In this notation, $D^{1}=\nabla^c$, $D^{-1}=\nabla^s$, and $D^0=\frac{1}{2}(\nabla^c+\nabla^s)$ is the Hermitian projection of the Riemannian (Levi-Civita) connection $\nabla$, which in some literature is called the {\em Lichnerowicz connection}. Note that when $g$ is K\"ahler, all $D^r$ are equal (to the Riemannian connection), while when $g$ is not K\"ahler, all $D^{r}$ are distinct.

Recall that a metric connection $D$ (that is, $Dg=0$) on a Hermitian manifold $(M^n,g)$ is called {\em K\"ahler-like,} if its curvature tensor $R^D$ satisfies the symmetry conditions
\begin{gather}
R^D(x,y,z,w) + R^D(y,z,x,w) + R^D(z,x,y,w)=0, \label{bnc}\\
R^D(Jx,Jy,z,w) = R^D(x,y,z,w) = R^D(x,y,Jz,Jw), \label{type_1}
\end{gather}
for any real tangent vectors $x$, $y$, $z$, $w$ in $M^n$, where $J$ is the almost complex structure.

When the connection $D$ is {\em Hermitian,} meaning $Dg=0$ and $DJ=0$, it is shown in \cite[Remark 6]{AOUV} that $D$ is K\"ahler-like if and only if
$$ R^D(X,Y,Z, \overline{W}) = 0, \ \ \ R(X,\overline{Y},Z,\overline{W}) = R(Z,\overline{Y},X,\overline{W})$$
for any type $(1,0)$ tangent vectors $X$, $Y$, $Z$, $W$ in $M^n$, since $R^D(\ast , \ast , Z, W)=0$ always holds by the assumption $DJ=0$. So for Hermitian connections, the K\"ahler-like condition simply means that the only possibly non-zero components of the curvature tensor are $R^D(X,\overline{Y},Z,\overline{W})$, and $R^D$ is symmetric when the first and the third positions are interchanged.

Note that in an earlier version of our paper \cite{ZZ}, we made a mistake in stating an equivalent description of K\"ahler-likeness, which was kindly pointed out to us by Fino and Tardini. In \cite{FT} they examined the definition in more details and clarified the subtleties, constructed interesting new examples of Strominger K\"ahler-like manifolds and studied the preservation problem of this curvature condition under the pluriclosed flow.

This notion of K\"ahler-likeness was introduced in \cite{YZ} in 2016 for the Riemannian and Chern connections, following the pioneer work of Gray \cite{Gray} and others. Angella, Otal, Ugarte and Villacampa \cite{AOUV} generalized it to all metric connection, and they particularly studied it for the  one-parameter family of Gauduchon connection $D^r$. They proposed the following two conjectures:

\begin{conjecture}[AOUV\cite{AOUV}]\label{cj1}
For a compact Hermitian manifold $(M^n,g)$, if the Gauduchon connection $D^{r}$ is K\"ahler-like and ${r}\neq \pm 1$, then $g$ must be K\"ahler.
\end{conjecture}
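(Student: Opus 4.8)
The plan is to translate the curvature symmetries defining K\"ahler-likeness into tensor identities on the Chern torsion, and then to extract a global integral identity whose forced vanishing on a compact manifold pins down the torsion. Concretely, I would first fix a local unitary frame $\{e_1,\dots,e_n\}$ with associated Chern torsion components $T^k_{ij}$, and write the Gauduchon connection as a torsion-twist of the Chern connection, $D^r = \nabla^c + \tfrac{1-r}{2}\,\gamma$, where the difference tensor $\gamma = \nabla^s - \nabla^c$ is built entirely from $T$ and $\overline{T}$. This is the natural base point: the Chern connection has torsion purely of type $(2,0)$, so its curvature is as close to K\"ahler as possible and its deviations are governed by $\nabla^c T$ together with quadratic expressions in $T$.

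Next I would compute $R^{D^r} = R^{c} + \tfrac{1-r}{2}\,d^{\nabla^c}\gamma + \big(\tfrac{1-r}{2}\big)^2\,\gamma\wedge\gamma$, which exhibits $R^{D^r}$ as the Chern curvature plus a term linear in $\nabla^c T$ and a term quadratic in $T$, the latter carrying the $r$-dependent weight $\big(\tfrac{1-r}{2}\big)^2$. I would then impose the two conditions from the characterization of K\"ahler-likeness for Hermitian connections recalled above. The vanishing condition $R^{D^r}(X,Y,Z,\overline{W})=0$ kills the part of $R^{D^r}$ that is of pure type in its first two slots; since the corresponding Chern components $R^{c}_{ijk\bar l}$ are themselves expressible through $\nabla^c T$ (up to quadratic torsion) by the Chern Bianchi identity, this yields an identity of the schematic shape $a(r)\,\nabla^c T = b(r)\,T\ast\overline{T}$. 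The symmetry condition $R^{D^r}(X,\overline{Y},Z,\overline{W}) = R^{D^r}(Z,\overline{Y},X,\overline{W})$ produces a second identity relating the Hermitian-symmetric defect of $R^{c}$ to another quadratic torsion expression.

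The decisive step is the global one. Assuming $g$ merely Hermitian and $M$ compact, I would take suitable traces of these identities so that the linear term becomes a divergence $\nabla^c(\text{trace of }T)$, integrate over $M$, and use Stokes' theorem to annihilate it. What survives is a sum of the scalar quadratic invariants of the torsion — typically $\sum_{i,j,k}|T^k_{ij}|^2$ together with the squared norm $\sum_k|\sum_i T^i_{ik}|^2$ of the torsion $(1,0)$-form — each weighted by an explicit rational function of $r$. The conjecture then reduces to showing that, for $r\neq\pm1$, these weights force every quadratic invariant to vanish, whence $T\equiv 0$ and $g$ is K\"ahler.

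The main obstacle is precisely the simultaneous control of these several quadratic torsion invariants. A single integrated identity constrains only one linear combination of them, and there is no a priori reason for that combination to be sign-definite; the exceptional values $r=\pm1$, where genuine non-K\"ahler examples exist (Chern-flat manifolds for $r=1$, and Strominger K\"ahler-like manifolds for $r=-1$), show that the weights must degenerate exactly there, so any successful argument has to use $r\neq\pm1$ in an essential, quantitative way. I expect a full resolution to require either a second independent integral identity — produced by also integrating the symmetry condition, or by invoking a duality within the one-parameter family $\{D^r\}$ that relates the linear and quadratic torsion contributions — so that the two combine into a positive-definite bound on all invariants, or else an auxiliary structural hypothesis (vanishing of the torsion $(1,0)$-form, or a balanced or pluriclosed assumption) under which only $\sum_{i,j,k}|T^k_{ij}|^2$ appears and the conclusion is immediate. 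This is why I anticipate a clean proof in such special cases, with the general statement remaining the hard, genuinely global part of the problem.
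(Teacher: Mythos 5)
The statement you set out to prove is recorded in the paper only as a conjecture and is not proved there either; what the paper offers is the partial answer of Theorem \ref{thm1}, and your plan is essentially the paper's route to that theorem. The paper converts K\"ahler-likeness of $D^r$ into torsion identities under a local unitary frame (Lemmas \ref{lemma2}--\ref{lemma5}), whose trace yields
$$n(2r-1)\sqrt{-1}\,\partial\overline{\partial}\,\omega^{n-1}=\big\{(r-1)^2|T|^2+(r^2+6r-3)|\eta|^2\big\}\,\omega^n,$$
and then integrates over the compact manifold exactly as you propose, killing the left side by Stokes. Your diagnosis of the obstruction is also precisely what happens: the surviving combination of quadratic invariants is sign-definite only when $r^2+6r-3\ge 0$, i.e.\ for $r$ outside $(-3-2\sqrt{3},\,-3+2\sqrt{3})$ together with $r\neq 1$, which is exactly the restriction appearing in case (ii) of Theorem \ref{thm1}; for the remaining values of $r$ the conjecture is still open, so your honest assessment of where the method stops is accurate. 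The one point where the paper goes beyond this scheme --- and which your sketch anticipates only as an ``auxiliary structural hypothesis'' to be assumed --- is the case $r=0$: instead of assuming balancedness, the paper \emph{proves} it (Proposition \ref{GKLr=0}) by a purely pointwise, non-integral argument. There, the $r=0$ specialization of Lemma \ref{lemma3} gives $A=B$ and $\phi=\phi^{\ast}$, and a rank-and-parity analysis of the skew-symmetric matrix $\{T^n_{j\ell}\}$, organized by an equivalence relation on indices, forces $\eta\equiv 0$; the integral identity above then yields $T\equiv 0$. So your proposal reproduces the paper's partial theorem and correctly locates the open part, but it misses that the extra input needed at $r=0$ can be established rather than hypothesized --- and that this requires an algebraic pointwise argument which no amount of further integration would supply.
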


\begin{conjecture}[AOUV\cite{AOUV}]\label{cj2}
For a compact Hermitian manifold $(M^n,g)$, if there are two real values $r\neq r'$ such that $D^{r}$ and $D^{r'}$ are both K\"ahler-like, then $g$ must be K\"ahler.
\end{conjecture}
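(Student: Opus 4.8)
The plan is to exploit the fact that, writing $\rho=\frac{1-r}{2}$ for the running parameter and $\psi=\nabla^s-\nabla^c$ for the (fixed, torsion-type) difference tensor, every Gauduchon connection is $D^r=\nabla^c+\rho\,\psi$, so by the standard formula for the curvature of a perturbed connection
$$R^{D^r}=R^c+\rho\,d^{\nabla^c}\psi+\rho^2[\psi,\psi]$$
is a tensor-valued quadratic polynomial in $\rho$ (equivalently in $r$), whose coefficients are assembled from the Chern curvature $R^c$, the Chern torsion $T$, and $\nabla^c T$. Since all $D^r$ are Hermitian, by the discussion after \eqref{type_1} the K\"ahler-like condition for $D^r$ is the vanishing of the two defect tensors $\mathcal A^{D^r}(X,Y,Z,\ol W):=R^{D^r}(X,Y,Z,\ol W)$ and $\mathcal B^{D^r}(X,\ol Y,Z,\ol W):=R^{D^r}(X,\ol Y,Z,\ol W)-R^{D^r}(Z,\ol Y,X,\ol W)$. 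Each defect is again quadratic in $\rho$, and the decisive structural point is that, because $R^c$ is of type $(1,1)$ in its first two slots, the constant term of $\mathcal A$ vanishes: $\mathcal A^{D^r}=\rho\,a_1+\rho^2 a_2$, whereas $\mathcal B^{D^r}=b_0+\rho\,b_1+\rho^2 b_2$ with $b_0=\mathcal B^{\nabla^c}$ a first-order torsion expression by the Chern first Bianchi identity.

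I would then feed in the hypothesis. Let $\alpha\neq\beta$ be the parameters attached to $r,r'$. In the main case where neither connection is Chern ($\alpha,\beta\neq0$), the polynomial $\mathcal A^{D^r}=\rho(a_1+\rho a_2)$ vanishes at the three distinct values $\rho=0$ (structurally), $\rho=\alpha$ and $\rho=\beta$ (by K\"ahler-likeness); a degree $\le2$ polynomial with three roots is identically zero, so $a_1=a_2=0$ and the $(2,0)$-curvature $R^{D^r}(X,Y,Z,\ol W)$ vanishes for the \emph{entire} family. In parallel, $\mathcal B^{D^r}=0$ at $\rho=\alpha,\beta$ yields, via Vieta, $\alpha\beta=b_0/b_2$ and $\alpha+\beta=-b_1/b_2$ when $b_2\neq0$. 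This pairing of the two parameters is, I believe, the ``duality'' of the abstract: it ties the K\"ahler-likeness of $D^r$ to that of a dual connection $D^{r^\vee}$, and I would use it both to organize the relations coming from $\mathcal B$ and to fold the edge cases back into the main case.

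The heart of the matter is then to convert the pointwise algebraic identity $a_2=0$ into the vanishing of $T$. Here $a_2$ is the $(2,0)$-part of $[\psi,\psi]$, a pointwise quadratic form in $T$ and $\ol T$, and the decisive computation is to show that a suitable double contraction of $a_2$ equals a positive multiple of $\|T\|^2$, so that $a_2=0$ forces $T\equiv0$ and hence $g$ is K\"ahler. I expect this contraction --- verifying that the $T*\ol T$ terms survive with a definite sign rather than cancelling against the $T*T$ terms --- to be the main obstacle, precisely because this is where the geometry, and not merely the linear algebra in $\rho$, enters. Should the pointwise contraction degenerate, the fallback is to combine the first-order identity $a_1=0$ with the symmetry relations $\mathcal B^{D^r}=0$, producing a $\nabla^c$-divergence whose integral over the compact manifold $M$ vanishes by Stokes, leaving a nonnegative integrand; this is the step that would use compactness.

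Finally I would dispose of the degenerate parameter values. If one of $r,r'$ equals $1$ (so Chern is K\"ahler-like and $b_0=0$) or $-1$ (Strominger K\"ahler-like), then $\mathcal A$ supplies only the single relation $a_1+\beta a_2=0$, which must be supplemented by the two relations from $\mathcal B$ and by the duality pairing to recover enough equations to again force $a_2=0$; the Strominger case can alternatively be short-circuited using the totally skew-symmetric torsion of $\nabla^s$. Assembling all cases gives $T\equiv0$, i.e.\ the Chern connection is torsion-free and $g$ is K\"ahler, proving Conjecture \ref{cj2}.
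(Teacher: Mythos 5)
Your polynomial-in-$\rho$ setup is sound, and in your main case it does correctly yield $a_1=a_2=0$; this recovers what the paper establishes in Lemma~\ref{lemma7}. But the step you yourself call the heart of the matter is unworkable. The coefficient $a_2$, being the $(2,0)$-part of $[\psi,\psi]$, is quadratic in the \emph{holomorphic} torsion components alone: in a unitary frame its vanishing reads $\sum_q \bigl( T^q_{ik}T^j_{q\ell} - T^q_{i\ell}T^j_{qk} \bigr)=0$, which together with the skew-symmetry $T^q_{ik}=-T^q_{ki}$ is equivalent to $\sum_q T^q_{ik}T^j_{q\ell}=0$. No conjugate components occur (your description of $a_2$ as a quadratic form in $T$ \emph{and} $\ol T$ is incorrect), so the condition is complex-bilinear, not sesquilinear, and no contraction of it can dominate $\|T\|^2$. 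Indeed it admits nonzero pointwise solutions: for $n=3$ take $T^3_{12}=-T^3_{21}=1$ and all other components zero (the Chern torsion of the Iwasawa manifold); then $\sum_q T^q_{ik}T^j_{q\ell}\equiv 0$ while $T\neq 0$. So $a_2=0$ cannot force $T=0$, and the ``decisive computation'' you hope for does not exist.

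What closes the argument in the paper is precisely the information you relegate to Vieta-style bookkeeping: the $(1,1)$-symmetry conditions $\mathcal{B}=0$ of the \emph{two} connections must be played against each other. The paper's case division is also different from yours: whenever $2rr'-r-r'\neq 0$ (which includes your ``degenerate'' Chern case $r=1$), a one-line trace identity (Lemma~\ref{lemma6}) already gives $|T|=0$; the only hard case is the duality pair $r'=r/(2r-1)$, which sits inside your main case. There, comparing the two expressions for $\eta_{k,\ol{\ell}}$ coming from the $(1,1)$-parts (Lemmas~\ref{lemma3}, \ref{lemma4}, \ref{lemma8}) forces $A=B$ and the Hermitian symmetry $\phi=\phi^{\ast}$ of $\phi^{\ell}_k=\sum_i T^{\ell}_{ki}\ol{\eta_i}$. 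Only then does $a_2=0$ bite: contracting $\sum_q T^q_{ik}T^j_{q\ell}=0$ with $\ol{\eta}_k\ol{\eta}_{\ell}$ gives $\phi^2=0$, a Hermitian nilpotent matrix vanishes, hence $\eta=0$, then $A=0$ from the $\eta_{k,\ol{\ell}}$ formula, then $T=0$. The positivity you were seeking lives in the $(1,1)$-part, not in $a_2$. Your fallback (a divergence identity integrated over compact $M$) is left entirely unconstructed and is in any case a detour: the paper's argument is pointwise, which is exactly why Theorem~\ref{thm2} holds without compactness.
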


Note that for each $n\geq 3$, there are examples of compact Hermitian manifold $(M^n,g)$ with $g$ non-K\"ahler such that its Riemannian (or Chern, or Strominger) connection is K\"ahler-like. In the Strominger connection case there are such examples of $n=2$ as well. Hence, the assumption ${r}\neq \pm 1$ in Conjecture \ref{cj1} is necessary.

For Conjecture \ref{cj1}, we observe that the following partial result is established:

\begin{theorem}\label{thm1}
Let $(M^n,g)$ be a compact Hermitian manifold of complex dimension $n$, whose Gauduchon connection $D^{r}$ is K\"ahler-like.
If one of the following occurs
\begin{enumerate}
\item $n=2$ and $r\neq -1$,
\item $n\geq 3, r\not\in ( -3-2\sqrt{3},0) \cup(0, \,  -3+2\sqrt{3})$ and $r\neq 1$,
\end{enumerate}
then $g$ is K\"ahler.
\end{theorem}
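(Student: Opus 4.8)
The plan is to reduce the K\"ahler-like hypothesis on $D^{r}$ to a pointwise algebraic identity among the Chern torsion components, and then to extract from it a global integral identity whose integrand is, in the stated range of $r$, a definite quadratic form in the torsion. Vanishing of that integral then forces the torsion to vanish, i.e.\ $g$ is K\"ahler. Throughout I use that $D^{r}$ is a Hermitian connection, so the equivalent description of K\"ahler-likeness recalled in the excerpt applies to it.

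First I would fix a local unitary frame, write the Chern connection matrix, its torsion with components $T^{i}_{jk}$ (antisymmetric in $j,k$) and torsion trace $\eta_i=\sum_j T^{j}_{ji}$, and the Chern curvature $R^{c}$. Since $D^{r}=\nabla^{c}+\frac{1-r}{2}(\nabla^{s}-\nabla^{c})$ and the difference $A=\nabla^{s}-\nabla^{c}$ is a torsion-type tensor, the structure equation gives
\[
R^{D^{r}} = R^{c} + \tfrac{1-r}{2}\, D^{c}A - \left(\tfrac{1-r}{2}\right)^{2} A\wedge A ,
\]
so every curvature component of $D^{r}$ is expressed through $R^{c}$, the covariant derivative $\nabla^{c}T$, and quadratic torsion terms $T\ast\overline{T}$, with coefficients polynomial in $r$. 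Because $r\neq\pm1$ the coefficient $\frac{1-r}{2}$ is nonzero, which is exactly what makes the two K\"ahler-like conditions informative rather than vacuous.

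Next I would impose the two K\"ahler-like conditions. The vanishing condition $R^{D^{r}}(X,Y,Z,\overline{W})=0$, combined with the Chern first Bianchi identity (which already relates $R^{c}(X,Y,Z,\overline{W})$ to a derivative of $T$ and to $T\overline{T}$), yields one relation expressing a covariant derivative of $T$ in terms of quadratic torsion; the symmetry condition $R^{D^{r}}(X,\overline{Y},Z,\overline{W})=R^{D^{r}}(Z,\overline{Y},X,\overline{W})$ yields a second. Eliminating the curvature and solving for $\nabla^{c}T$ produces a closed pointwise formula $\nabla^{c}T=(\text{quadratic in }T)$ with $r$-dependent coefficients.

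The decisive step is global. I would contract the resulting identity into a real scalar, integrate over the compact $M$, and integrate by parts so that all divergence terms drop out, arriving at
\[
\int_{M} Q_{r,n}(T,\overline{T})\, dV = 0 ,
\]
where $Q_{r,n}$ is a Hermitian quadratic form in the torsion whose coefficients are rational in $r$ and $n$. The hard part is the \emph{positivity analysis} of $Q_{r,n}$: splitting $T$ into its trace part $\eta$ and its tracefree part, one finds that definiteness of the tracefree block is governed by the sign of $r^{2}+6r-3$, whose roots are exactly $-3\pm2\sqrt{3}$; this is precisely the excluded interval in case (2), and $r=0$ survives because the offending block degenerates there. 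In complex dimension $n=2$ the tracefree part of $T$ vanishes identically, so only the $\eta$-block remains and the requirement relaxes to $r\neq-1$, giving case (1). Once $Q_{r,n}$ is (semi)definite, the integral identity forces $T\equiv 0$, hence $g$ is K\"ahler.
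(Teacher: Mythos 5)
Your overall strategy---derive a pointwise identity expressing covariant derivatives of the Chern torsion in terms of quadratic torsion expressions, trace it, integrate over the compact manifold so that the divergence term drops out, and then run a sign analysis on the resulting quadratic form---is essentially the paper's route (Lemmas \ref{lemma2}--\ref{lemma5} there, feeding into the machinery of \cite{YZ1}). The integral identity you are after is, in the paper's notation,
\begin{equation*}
n(2r-1)\sqrt{-1}\,\partial\overline{\partial}\,\omega^{n-1} \;=\; \bigl\{ (r-1)^2|T|^2 + (r^2+6r-3)|\eta|^2 \bigr\}\,\omega^n,
\end{equation*}
and Stokes kills the left-hand side upon integration. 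However, your positivity analysis contains one cosmetic error and one fatal one. Cosmetic: $r^2+6r-3$ multiplies $|\eta|^2$, i.e.\ the \emph{trace} part of the torsion, while the full norm $|T|^2$ carries the always-nonnegative coefficient $(r-1)^2$; you have the roles of the two blocks reversed. Fatal: your claim that ``$r=0$ survives because the offending block degenerates there'' is false. At $r=0$ the coefficient of $|\eta|^2$ is $-3$, not $0$, so the integrand is $|T|^2 - 3|\eta|^2$, which is genuinely indefinite (in dimension $2$, for instance, $|T|^2 = 2|\eta|^2$, so it equals $-|\eta|^2 \leq 0$), and the integral identity alone yields nothing at $r=0$. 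This is exactly why the paper needs a separate, substantial argument there: Proposition \ref{GKLr=0}, which shows by a purely pointwise algebraic analysis of the $r=0$ specialization of Lemma \ref{lemma3} (a rank and equivalence-class argument on the skew-symmetric matrix $\{T^n_{j\ell}\}$) that a K\"ahler-like Lichnerowicz connection forces $\eta \equiv 0$, i.e.\ the metric is balanced; only after that does the integral identity give $T=0$. The paper explicitly flags this $r=0$ case as the main novelty of Theorem \ref{thm1}, and your proposal has no substitute for it.

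A smaller gap of the same nature occurs in your case (1): in dimension $2$, substituting $|T|^2 = 2|\eta|^2$ collapses the integrand to $(3r-1)(r+1)|\eta|^2$, so the integral argument is empty not only at $r=-1$ but also at $r=\tfrac{1}{3}$. Since case (1) as stated includes $r=\tfrac{1}{3}$, that value needs a separate treatment as well (the paper delegates it to the surface argument of \cite{YZ1}); your assertion that the requirement simply ``relaxes to $r\neq -1$'' glosses over this.
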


\begin{remark}
The second case (except when $r=0$) of Theorem \ref{thm1}, namely, $r\not\in ( -3-2\sqrt{3}, \,  -3+2\sqrt{3})$ and $r\neq 1$ implies the K\"ahlerness of $g$ when $n\geq3$, is due to Fu and Zhou \cite[Corollary 5.7]{Fu-Zhou}, where they proved the result by using properties on  total scalar curvatures.
\end{remark}

Both cases in Theorem \ref{thm1}, except $r=0$ in the second case, were proved in \cite{YZ1} under the stronger assumption that $D^r$ is flat. However, we note that the same proof works in the case of K\"ahler-like $D^r$ as well, since \cite[Lemma 3.1]{YZ1} still holds under the K\"ahler-like $D^r$ assumption. So the main novelty here is the $r=0$ case, namely, if a  compact Hermitian manifold has K\"ahler-like Lichnerowicz connection $D^0$, then the metric  must be K\"ahler.

Through a detailed case by case analysis, the authors of \cite{AOUV} verified the validity of Conjecture \ref{cj1} for all complex nilmanifolds and Calabi-Yau type solvmanifolds of complex dimension $n=3$.

The main purpose of this article is to confirm Conjecture \ref{cj2}. Actually, one could even drop the compactness assumption in this case.

\begin{theorem} \label{thm2}
Given a Hermitian manifold $(M^n,g)$, if there are two real values $r\neq r'$ such that $D^{r}$ and $D^{r'}$ are both K\"ahler-like,
then $g$ is K\"ahler.
\end{theorem}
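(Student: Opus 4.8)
The plan is to exploit that the whole Gauduchon family is an affine line of connections, so that each component of the curvature of $D^r$ is a quadratic polynomial in $r$, and then to use the two given values as two common roots of these polynomials. Writing $\Psi=\nabla^s-\nabla^c$ for the (tensorial) difference of the Strominger and Chern connections and $t=\frac{1-r}{2}$, one has $D^r=\nabla^c+t\,\Psi$, hence $R^r = R^c + t\,d^{\nabla^c}\Psi + t^2\,\Psi\wedge\Psi$. Thus every component of $R^r$ has the form $C_0+tC_1+t^2C_2$, where $C_1$ is of type $\nabla^c T$ (a first covariant derivative of the Chern torsion) and $C_2$ is algebraic of type $T\!\ast\!T$. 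By the reduction recalled in the Introduction, $D^r$ being K\"ahler-like is equivalent to two tensor conditions: (I) the $(2,0)$-components $R^r(X,Y,Z,\overline W)$ vanish, and (II) the $(1,1)$-components satisfy $R^r(X,\overline Y,Z,\overline W)=R^r(Z,\overline Y,X,\overline W)$.

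First I would dispose of condition (I). Since the Chern curvature is of type $(1,1)$, its $(2,0)$-part vanishes, so (I) reads $t\,P_1+t^2P_2=0$ with $P_1=(d^{\nabla^c}\Psi)^{(2,0)}$ and $P_2=(\Psi\wedge\Psi)^{(2,0)}$. Evaluating at the two distinct values and assuming for the moment $r,r'\neq 1$ (so $t,t'\neq 0$), dividing by $t$ and $t'$ and subtracting forces $P_1=P_2=0$ separately; the crucial feature is the absence of a constant term, which is exactly what makes two values suffice. For condition (II), writing the defect polynomial as $Q_0+tQ_1+t^2Q_2$, the two values only yield the Vieta relations $Q_1=-(t+t')Q_2$ and $Q_0=t t'\,Q_2$, because here the constant term $Q_0$, the Chern symmetry defect, is nonzero in general.

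The endgame is a positivity argument. A direct computation of the purely algebraic coefficient $Q_2$, i.e.\ the symmetry defect of $(\Psi\wedge\Psi)^{(1,1)}$, shows that a suitable double trace (identifying the two pairs of conjugate indices) returns $\sum_{i,j,k}|T^k_{ij}|^2+\sum_p|\eta_p|^2$, where $\eta_p=\sum_i T^i_{ip}$ is the torsion trace. Hence it suffices to prove $Q_2=0$: positivity then gives $T\equiv 0$, i.e.\ $g$ is K\"ahler. Every step so far is pointwise and algebraic, which is why no compactness is needed, matching the statement.

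The main obstacle is precisely to force $Q_2=0$. Condition (I) by itself is not enough: one can write down nonzero Chern torsions that annihilate the $(2,0)$ coefficient $P_2$ (for $n\ge 3$, take the torsion with only $T^n_{12}\neq0$ and all other components zero), so the two conditions must be made to interlock. This is where I expect the `duality' among the Gauduchon connections to be decisive: it relates the K\"ahler-like condition at $r$ to a companion condition at a dual parameter, effectively furnishing a third root (or an extra linear relation) for the quadratic defect polynomial $Q$, which then collapses the Vieta relations to $Q\equiv0$ and in particular $Q_2=0$. Finally I would treat separately the boundary cases where $r$ or $r'$ equals $\pm1$, since there one of the polynomials degenerates (the Chern curvature is automatically of the correct type in the $(2,0)$ slot, and neither Chern- nor Strominger-K\"ahler-likeness implies K\"ahler on its own); in those cases one still has the surviving linear relation together with the Chern first Bianchi identity, which expresses $Q_0$ through $\nabla^c T$, to pin down the torsion.
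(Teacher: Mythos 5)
Your framework is sound, but only its ``easy half'' actually works, and that half coincides with the paper's Lemma \ref{lemma6}. Take the double trace ($i=j$, $k=\ell$, summed) of your defect polynomial: the $t^2$-coefficient is indeed a positive multiple of $S:=|T|^2+|\eta|^2$ (your positivity claim is correct), while the traced K\"ahler-like condition at parameter $r$ reduces to the single scalar equation $2(2r-1)\chi_c=-(r-1)^2S$, where $\chi_c=\sum_k\eta_{k,\bar k}$ is taken with respect to the Chern connection. Eliminating the one unknown $\chi_c$ between the two parameter values gives exactly $(2rr'-r-r')S=0$. So your Vieta argument proves the theorem for every pair with $r'\neq\xi(r)=r/(2r-1)$, i.e.\ every non-dual pair. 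But for dual pairs the relation $2rr'-r-r'=0$ makes the Vieta system self-consistent with $S\neq0$: the two root conditions merely determine $\chi_c$ in terms of $S$ and impose no constraint on $S$ itself, so no manipulation of the defect polynomial can yield $Q_2=0$ there. Your proposed fix is backwards: the duality does not ``furnish a third root''; it labels precisely the pairs for which the two-root argument degenerates. Worse, if a third K\"ahler-like parameter $r''$ did exist then, $\xi$ being an involution, at least one of the pairs $(r,r'')$, $(r',r'')$ would be non-dual, and the easy case would already finish the proof --- so positing a third root begs the question.

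What closes the dual case in the paper is a pointwise linear-algebra argument with no counterpart in your sketch. First, your condition (I) gives only $P_2=(\Psi\wedge\Psi)^{2,0}=0$, i.e.\ $\sum_q(T^q_{ik}T^j_{q\ell}-T^q_{i\ell}T^j_{qk})=0$, the $k\ell$-skew part of the needed identity; contracted against $\bar\eta_k\bar\eta_\ell$ this collapses to $0=0$ and feeds nothing into the endgame. The full, unsymmetrized identity $\sum_q T^q_{ik}T^j_{q\ell}=0$ (Lemma \ref{lemma7}) requires in addition the $(3,0)$-part of the first Bianchi identity at both parameters, namely \eqref{eq:rTTcyclic} and \eqref{eq:rTderi} --- an ingredient you invoke only for the boundary cases. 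Second, comparing the $(1,1)$-conditions at $r$ and at $r'=\xi(r)$ through the change-of-connection formula \eqref{eq:deltaeta}, $\eta_{k|\bar\ell}-\eta_{k,\bar\ell}=(r'-r)\overline{\phi^k_\ell}$, yields $A=B$ and, crucially, $\phi=\phi^{\ast}$ (Lemma \ref{lemma8}). The endgame is then: contracting Lemma \ref{lemma7} with $\bar\eta_k\bar\eta_\ell$ gives $\sum_q\phi_i^q\phi_q^j=0$, a Hermitian nilpotent matrix vanishes, so $\phi=0$; the trace gives $\eta=0$; Lemma \ref{lemma8} then forces $A=0$, hence $|T|^2=\mathrm{tr}(A)=0$. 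This is not a positivity-of-trace argument, and nothing in your scheme substitutes for it. (Your worry about $r,r'=\pm1$ is also unnecessary: since $\xi(1)=1$ and $\xi(-1)=\frac13$, any pair containing $1$, and any pair containing $-1$ other than $\{-1,\frac13\}$, is non-dual and already covered by the easy case.)
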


In the proof of this theorem, we found an interesting ``duality" phenomenon amongst the Gauduchon connections $D^r$. It is given by the expression $\xi (r) = r/(2r-1)$. Note that it is a diffeomorphism from ${\mathbb R}\setminus \{ \frac{1}{2} \}$ onto itself satisfying $\xi (\xi (r)) =r$. Hence, for any $r\neq \frac{1}{2}$, one can consider $D^{\xi (r)}$ as the {\em dual} of $D^r$. Note that $D^{\frac{1}{2}}$ is special in the sense that its K\"ahler-likeness will imply that $g$ is K\"ahler. This was observed in \cite{YZ1} already when $D^{\frac{1}{2}}$ is assumed to be flat.

Note that the only fixed points of $\xi$ are $r=1$ and $r=0$, so the Chern connection $D^1$ and Lichnerowicz connection $D^0$ are ``self-dual" in some sense, while the other Gauduchon connections form duality pairs. In particular, the dual of Strominger conneciton $D^{-1}$ is $D^{\frac{1}{3}}$, which was called the {\em minimal connection} in \cite{YZ1} since it has the smallest torison norm amongst all Gauduchon connections. Also, for $r=-3+2\sqrt{3}$, $\xi (r)= -3-2\sqrt{3}$, so the two boundary points appeared in the second case of Theorem \ref{thm1} also form a duality pair.

While we do not know if this pairing phenomenon amongst the Gauduchon connections has any deeper implication in geometry or physics, we do observe here that, if $D^r$ and $D^{r'}$ are both K\"ahler-like, for two distinct real values $r$, $r'$ that are not a pair, namely, $r'\neq \xi (r)$, then one can easily show that $g$ must be K\"ahler, in a relatively straight forward way. Therefore, the main part of the proof of Theorem \ref{thm2} hinges on the case when $r'=\xi (r)$.

Next let us  consider the K\"ahler-like condition for the plane of connections spanned by the Gauduchon line and the Riemannian connection $\nabla$ on a Hermitian manifold $(M^n,g)$. When $g$ is not K\"ahler, $\nabla$ does not lie on the Gauduchon line. Hence $\nabla$ and $D^r$ together span a plane of canonical metric connections on $(M^n,g)$:
\begin{equation}
D^{r}_{s} = (1-s)D^r + s\nabla, \ \ \ r,s \in {\mathbb R}.
\end{equation}
We will call $D^{r}_{s}$ the {\em canonical $(r,s)$-connection.} Note that the parameter space for this plane of canonical connections is actually
$$\Omega = {\mathbb R}^2 \setminus L_1^{\ast}, \ \ \ L_1^{\ast} = \{ (r,1) \mid r\neq 0\} ,  $$
namely, the complement of the punctured horizontal line $L_1^{\ast}$ in the $rs$-plane, since $D^r_{1}=\nabla$ for any $r
\in \mathbb{R}$.

As $D^0_1=\nabla$ is the Riemannian connection, denote by $D^0_{\!-\!1}=\nabla'$ the mirror reflection of $\nabla$ with respect to the Gauduchon line
$L_0 = \{(r,s) \big| s=0\}$. We will call $\nabla'$ the {\em anti-Riemannian connection}. It turns out that $\nabla'$ is K\"ahler-like if and only if $\nabla$ is K\"ahler-like. Hence, for any $n\geq 3$, there are examples of compact Hermitian manifolds which are non-K\"ahler but have K\"ahler-like $\nabla$ and $\nabla'$. Another pair of points in $\Omega$ also turns out to be quite special, namely $(-1,2)$ and $(\frac{1}{3}, -2)$. Let us denote these two special connections by
$$ \nabla^+ = D^{\!-\!1}_2\quad \text{and} \quad \nabla^- = D^{\frac{1}{3}}_{\!-\!2}.$$
These two connections $\nabla^+$ and $\nabla^-$ can be expressed in terms of the Riemannian, Chern, and Strominger connections as
$$\nabla^+ =2\nabla - \nabla^s\quad \text{and} \quad\nabla^-=2\nabla^c - \nabla^+=2\nabla^c+\nabla^s -2 \nabla = 2\nabla'-\nabla^s.$$

A natural question is when will $D^r_s$ be K\"ahler-like? For the two special connections $\nabla^+$ and $\nabla^-$ above, by use of the main result in \cite{ZZ}, we will show that

\begin{theorem} \label{thm3}
For any Hermitian manifold $(M^n,g)$, $\nabla^+$ or $\nabla^-$ is K\"ahler-like if and only if the Strominger connection $\nabla^s$ is K\"ahler-like.
\end{theorem}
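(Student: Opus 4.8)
The plan is to compare the curvature tensors of $\nabla^+$ and $\nabla^-$ with that of the Strominger connection $\nabla^s$ through their difference tensors, and then to match the resulting Kähler-like equations against the characterization of Strominger Kähler-like metrics established in \cite{ZZ}. First I would record the difference tensors. Writing the Bismut torsion as the totally skew-symmetric $3$-tensor $H$, one has $\nabla^s = \nabla + \tfrac12 H$, so from $\nabla^+ = 2\nabla - \nabla^s$ we immediately get $\nabla^+ = \nabla - \tfrac12 H$. Thus $\nabla^+$ is the metric connection whose skew torsion is $-H$; equivalently, it is exactly the Strominger connection of the conjugate Hermitian manifold $(M^n, g, -J)$. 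Since the conditions \eqref{bnc}--\eqref{type_1} are manifestly invariant under $J \mapsto -J$ (and so is K\"ahlerness), this already recasts the $\nabla^+$ statement as a comparison between the Strominger K\"ahler-likeness of $(M,g,J)$ and of its conjugate. For $\nabla^-$, the identity $\nabla^- = \nabla^s + 2(\nabla^c - \nabla)$ exhibits $\nabla^-$ as $\nabla^s$ plus twice the Chern difference tensor; here the relevant projection onto the Gauduchon line is the dual parameter $\xi(-1) = \tfrac13$, matching $\nabla^- = D^{\frac13}_{-2}$, so I expect the $\nabla^-$ analysis to run parallel to the $\nabla^+$ one via the duality $r \mapsto r/(2r-1)$.

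Next I would apply the standard curvature formula for a metric connection $\nabla + A$, namely $R^{\nabla+A}(x,y)z = R(x,y)z + (\nabla_x A)_y z - (\nabla_y A)_x z + [A_x, A_y]z$, with $\nabla$ the Levi-Civita connection. Taking $A = \pm\tfrac12 H$ shows that $R^s$ and $R^+$ carry the same quadratic-in-$H$ term, while their terms linear in $\nabla H$ differ only by an overall sign; hence $R^s$ and $R^+$ differ by precisely the antisymmetrized covariant derivative $(\nabla_x H)(y,z,w) - (\nabla_y H)(x,z,w)$. The same formula applied with the Chern difference tensor gives the analogous comparison for $R^-$. Consequently the K\"ahler-like conditions for $\nabla^+$ (resp. $\nabla^-$) reduce to the K\"ahler-like conditions for $\nabla^s$ together with correction equations that are linear in $\nabla H$ (resp. in the covariant derivative of $\nabla^c - \nabla$).

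Finally I would feed in the main result of \cite{ZZ}, which gives a complete tensorial characterization of Strominger K\"ahler-like metrics, in particular forcing the torsion to be parallel and the metric to be pluriclosed. For the ``if'' direction, assuming $\nabla^s$ is K\"ahler-like, \cite{ZZ} supplies the parallel-torsion identity that forces the linear correction terms above either to vanish or to respect \eqref{bnc}--\eqref{type_1}, which yields the K\"ahler-likeness of $\nabla^+$ and $\nabla^-$. For the ``only if'' direction, I would decompose the K\"ahler-like equations for $\nabla^+$ (resp. $\nabla^-$) into their $J$-types and contract suitably, isolating first the pure-type symmetry of $R^s$ and then the parallel-torsion condition, so as to recover the full system of \cite{ZZ}.

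I expect the main obstacle to be exactly this ``only if'' direction. The subtlety is that the Strominger connection and its conjugate differ not merely in the sign of $\nabla H$: once one passes from the $\nabla^s$-parallel condition to $\nabla H$, a genuinely \emph{quadratic} torsion term appears, so the naive conjugation symmetry does not by itself identify the two sets of structural equations. The crux will therefore be to show that the K\"ahler-like hypothesis on $\nabla^+$ (resp. $\nabla^-$) is strong enough to annihilate this quadratic ambiguity and return precisely the system characterizing Strominger K\"ahler-likeness in \cite{ZZ}. Controlling that quadratic term — equivalently, verifying that conjugation of the complex structure cannot destroy the parallel-torsion structure once the curvature symmetry is imposed — is where the detailed type-by-type computation will be concentrated.
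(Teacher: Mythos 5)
There is a genuine error at the very first step of your reduction. You claim that $\nabla^+ = \nabla - \tfrac12 H$ ``is exactly the Strominger connection of the conjugate Hermitian manifold $(M^n,g,-J)$.'' This is false: the Strominger connection is characterized as the unique connection satisfying $\nabla g=0$, $\nabla J=0$ and totally skew torsion, and the first and third conditions do not involve $J$ while the second is identical for $J$ and $-J$. Hence the Strominger connection of $(M^n,g,-J)$ is $\nabla^s$ itself (equivalently, the Bismut torsion $3$-form is invariant under $J\mapsto -J$), not $\nabla-\tfrac12 H$. Indeed, were your identification correct, it would force $\nabla^+=\nabla^s$, i.e. $\nabla=\nabla^s$, i.e. $g$ K\"ahler. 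The underlying point is that $\nabla^+$ is a metric connection with skew torsion $-H$ but it is \emph{not} Hermitian: $\nabla^+J = 2\nabla J - \nabla^s J = 2\nabla J \neq 0$ whenever $g$ is non-K\"ahler, so it is not the Strominger connection of any compatible complex structure, and no conjugation symmetry can recast the statement. (A related conflation: the equivalence between K\"ahler-likeness of $\nabla^+$ and of $\nabla^-$ comes from the reflection $\Psi(r,s)=(\tfrac{1-s}{1+s}r,-s)$ in the $(r,s)$-plane following Lemma \ref{lemma9}, which flips only the $\Theta^{\!D}_2$-block of the curvature, not from the Gauduchon-line duality $\xi(r)=r/(2r-1)$.)

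Your curvature comparison formula $R^{+}=R^{s}-\bigl((\nabla_xH)_y-(\nabla_yH)_x\bigr)$ is correct, and the overall strategy of matching against the characterization of Strominger K\"ahler-like metrics in \cite{ZZ} is the right one (it is what the paper does). But the substance of the theorem lies exactly in the part you leave open: both in your ``if'' direction (where you assert without proof that the quadratic leftover, after trading $\nabla H$ for $\nabla^sH$, ``respects'' \eqref{bnc}--\eqref{type_1}) and in your ``only if'' direction, which you explicitly flag as the main obstacle and do not carry out. The paper closes precisely this gap by componentwise computation: specializing Lemma \ref{lemma10} to $t=0$, $s^2=4$ gives the four identities \eqref{eq:skl1}--\eqref{eq:skl4} equivalent to K\"ahler-likeness of $\nabla^\pm$, from which \eqref{eq:skl5} follows; then the difference formulas \eqref{eq:deltaT} and \eqref{eq:deltaTbar} (with $r=1$, $r'=-1$) convert Chern covariant derivatives into Strominger ones and show that \eqref{eq:skl1}--\eqref{eq:skl5} are equivalent to $T^j_{ik|\ell}=T^j_{ik|\overline{\ell}}=0$ together with the quadratic identities \eqref{eq:skl2}, \eqref{eq:skl5} --- which by \cite{ZZ} is exactly Strominger K\"ahler-likeness. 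In other words, the ``quadratic ambiguity'' you worry about is not annihilated by any symmetry argument; it is resolved only by these explicit torsion identities, which your proposal does not produce. As it stands, the proposal is a plausible plan with a false shortcut and the decisive computation missing.
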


Note that for each $n\geq 2$, there are compact Hermitian manifolds that are Strominger K\"ahler-like but non-K\"ahler. Such manifolds are rather restrictive and interesting. In \cite{ZZ} and \cite{ZZ1}, we showed that such manifolds are always pluriclosed and classified them amongst all compact complex nilmanifolds endowed with nilpotent complex structures. Then a classification theorem \cite{YZZ} was also proved for all compact Strominger K\"ahler-like manifolds of dimension $3$. Let
$$\Omega'=\Omega \setminus \big( \{ s=0\} \cup \{ (0,1), (0,-1), (-1,2), (\frac{1}{3}, -2)\} \big) $$
 for the complement of Gauduchon line and the four special connections $\nabla$, $\nabla'$, $\nabla^+$, $\nabla^-$. Then we have the following:

\begin{theorem} \label{thm4}
Let $(M^n,g)$ be a Hermitian manifold such that $D^r_{s}$ is K\"ahler-like for some $(r,s)\in \Omega'$. Then  $g$ is K\"ahler.
\end{theorem}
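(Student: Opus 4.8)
The plan is to expand the curvature of the canonical $(r,s)$-connection relative to the Chern connection and to read off, component by component, what the K\"ahler-like conditions \eqref{bnc}--\eqref{type_1} impose on the Chern torsion. Writing $D^r_s=\nabla^c+\gamma$, the difference tensor is the explicit combination
\[
\gamma=(1-s)\tfrac{1-r}{2}\,(\nabla^s-\nabla^c)+s\,(\nabla-\nabla^c),
\]
whose coefficients are polynomials in $(r,s)$ and in which $\nabla^s-\nabla^c$ and $\nabla-\nabla^c$ are the two universal tensors built algebraically from the Chern torsion $T$ and its conjugate $\bar T$. The essential structural point is that, when $g$ is non-K\"ahler, $\nabla$ does not preserve $J$, so the summand $s(\nabla-\nabla^c)$ carries a type-mixing part; consequently $D^r_s$ is \emph{not} a Hermitian connection and I must use the full conditions \eqref{bnc}, \eqref{type_1} rather than the Hermitian simplification of \cite{AOUV}.

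Next I would compute $R^{D^r_s}$ from $\gamma$, its Chern-covariant derivative, and the quadratic bracket terms. Since the Chern curvature already satisfies \eqref{type_1} automatically (its two-form is type $(1,1)$ and its endomorphism part is $\mathfrak u(n)$-valued), the entire obstruction to \eqref{type_1} for $D^r_s$ sits in the $\gamma$-contribution, which I would organize by bidegree so that each component appears as a polynomial in $(r,s)$ times a tensor built from $\nabla^c T$ (first order) and from the quadratic terms $T\!\ast\!T,\,T\!\ast\!\bar T,\,\bar T\!\ast\!\bar T$ (zeroth order). The condition \eqref{type_1} then says that every ``impure'' component — anything not of the form $R(X,\bar Y,Z,\bar W)$ — must vanish, yielding a system of equations of the shape $c_a(r,s)\,\Phi_a[T]=0$. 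Feeding the surviving $R(X,\bar Y,Z,\bar W)$ components into \eqref{bnc}, and using the Chern first Bianchi identity (which itself expresses the symmetry defect of $R^c$ through $\nabla^c T$ and quadratic torsion), produces the remaining relations, again with polynomial-in-$(r,s)$ coefficients.

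The heart of the matter is to isolate, from this system, a purely \emph{pointwise algebraic} identity in $T$ for every $(r,s)\in\Omega'$. The decisive new input off the Gauduchon line is that \eqref{type_1} is no longer automatic: its impure-component equations are present only when $s\neq 0$, and a suitable combination of them should cancel the first-order $\nabla^c T$ terms and leave an identity of the form $q(r,s)\,|T|^2=0$ at each point (for a specific norm-squared contraction of the torsion), with $q$ a fixed polynomial. This is exactly why compactness can be dropped, in contrast to the single-Gauduchon-connection situation of Theorem~\ref{thm1}, where only an integral identity is available. The claim I must verify is that the degeneracy locus of the relevant coefficients within $\Omega$ is precisely the excluded set $\{s=0\}\cup\{(0,1),(0,-1),(-1,2),(\tfrac13,-2)\}$: for $(r,s)\in\Omega'$ the coefficients are nonzero, the torsion expressions vanish, and $T\equiv 0$, i.e.\ $g$ is K\"ahler. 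The geometry of the four special points is consistent with this: along $s=0$ the type-mixing terms disappear and the system collapses to the Gauduchon case; at $\nabla^{\pm}=(-1,2),(\tfrac13,-2)$ the critical coefficient factors through the Strominger data and, by Theorem~\ref{thm3}, K\"ahler-likeness only forces $\nabla^s$ to be K\"ahler-like; and at $\nabla,\nabla'$ the equations reduce to the (non-rigid) Riemannian K\"ahler-like condition.

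The step I expect to be the main obstacle is the exact control of the bivariate coefficients $c_a(r,s)$ and $q(r,s)$ through the curvature expansion of a non-Hermitian connection: the type-mixing part of $\gamma$ makes the curvature two-form acquire genuine $(2,0)$ and $(0,2)$ components and couples $T$ with $\bar T$, so the coefficients are honestly two-variable polynomials that must be tracked without error. Within this, the delicate point is to prove that the simultaneous degeneracy locus of these coefficients meets $\Omega$ in exactly the five excluded loci and nowhere else — equivalently, that the pointwise algebraic consequence $q(r,s)\,|T|^2=0$ genuinely has $q(r,s)\neq 0$ throughout $\Omega'$. Establishing this last non-vanishing, which is what upgrades the differential constraints to the algebraic conclusion $T\equiv0$, is the crux.
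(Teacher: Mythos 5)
Your setup coincides with the paper's: you write $D^r_s=\nabla^c+\gamma$, observe that the summand $s(\nabla-\nabla^c)$ mixes types so that $D^r_s$ is not a Hermitian connection and the full conditions \eqref{bnc}--\eqref{type_1} must be imposed, and you reduce K\"ahler-likeness to a system of identities on the Chern torsion with polynomial coefficients in $(r,s)$. This is exactly the content of Lemma \ref{lemma9} and Lemma \ref{lemma10} of the paper (where the bookkeeping is organized through $t=1-r+rs$, the Hermitian part of $D^r_s$ being the Gauduchon connection $D^{1-t}$).

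The gap is precisely in the step you call the crux. You propose that a suitable combination of these identities, after cancelling the first-order terms, leaves a pointwise \emph{scalar} identity $q(r,s)\,|T|^2=0$, so that the theorem reduces to checking that $q$ does not vanish on $\Omega'$. No such mechanism can work uniformly on $\Omega'$, and this is visible on each of the three strata into which the paper splits the proof. On the line $r=0$ (i.e.\ $t=1$), which lies in $\Omega'$ for $s\neq 0,\pm 1$, the zeroth-order content of the system is the tensorial identity \eqref{eq:t1}; its scalar traces give only $A=\phi^{\ast}$, $B=\phi$, hence $|T|^2=|\eta|^2$, never $|T|^2=0$, and the paper must instead run a frame-normalization argument ($\eta=|\eta|\varphi_n$, from which $A_{n\overline{n}}=B_{n\overline{n}}=0$ forces $\phi=0$, a contradiction); this is Lemma \ref{lemma15}. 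On the hyperbola $t=0$ with $s\neq 0,\pm 2$, tracing \eqref{eq:t0b} gives only $s^2|\eta|^2=(s^2-2)|T|^2$, and the pointwise algebra yields only $\eta=0$; the conclusion $T\equiv 0$ there is genuinely \emph{differential}, obtained from the first-order identity $\chi=|T|^2$ (the trace of \eqref{eq:29}) once $\eta\equiv 0$ — this is Lemma \ref{lemma16}, so on this stratum even a correct pointwise algebraic identity in $T$ alone cannot exist in the form you want. Finally, in the generic case $s\neq 0$, $t\neq 0,1$, the paper again does not conclude via a scalar identity: it extracts $\sum_q T^q_{ik}T^j_{\ell q}=0$, reads it as the nilpotency $A_XA_Y=0$ of the torsion operators, and invokes the common-kernel claim from the proof of Theorem 2 in \cite{YZ} plus an induction on the dimension, combined with \eqref{eq:key}, to kill $T$ (Lemma \ref{lemma13}). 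Your proposal contains none of these three arguments, and since the scalar-coefficient route stalls at $|T|^2=|\eta|^2$ (resp.\ at $\eta=0$) on two of the three strata, what is missing is not a computation to be tracked but the actual substance of the proof.
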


The proof of this theorem will be divided into two cases: (i) we deal with the $rs\neq 0$ and $(r,s)\neq (-1,2), (\frac{1}{3}, -2)$ in Lemma \ref{lemma13} and Lemma \ref{lemma16}; (ii) we deal with the vertical line case: $r=0$ and $s\neq 0,1,-1$ in Lemma \ref{lemma15}.

One may also consider the generalized question of Conjecture \ref{cj2} for the connections in $\Omega$, namely, if the canonical metric connections $D^r_s$ and $D^{r'}_{s'}$ are both K\"ahler-like, for two distinct points $(r,s)$, $(r',s')$ $\in \Omega$, then must $g$ be K\"ahler?

Obviously, the cases should be ruled out when the two connections happen to be the pair $\{ \nabla , \nabla'\}$ or any pair out of the set $\{ \nabla^s, \nabla^+, \nabla^-\}$, as $g$ doesn't have to be K\"ahler at that time. It turns out that, for the other cases, the answer to the question above is yes, where the compactness assumption is not needed just like in Theorem \ref{thm2}. 

\begin{theorem} \label{thm5}
Let $(M^n,g)$ be a Hermitian manifold whose connections $D^r_{s}$ and $D^{r'}_{s'}$ are K\"ahler-like,
where $(r,s)$ and $(r',s')$ are distinct points in $\Omega$. If the two connections above are not the pairs:
$ \{ \nabla , \nabla'  \}$, $\{ \nabla^+ , \nabla^-  \}$, $\{ \nabla^+ , \nabla^s  \}$, $ \{ \nabla^- , \nabla^s  \}$,
then $g$ must be K\"ahler.
\end{theorem}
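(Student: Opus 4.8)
The plan is to bootstrap entirely from the three preceding theorems plus the separate-quadraticity of the Kähler-like defect. Write any connection in the plane as $D^r_s = \nabla^c + \Psi(r,s)$, where $\Psi(r,s) = \tfrac{1-r}{2}(1-s)\,\mathbb{S} + s\,\mathbb{L}$ with $\mathbb{S} = \nabla^s - \nabla^c$ and $\mathbb{L} = \nabla - \nabla^c$; for fixed $r$ this is affine in $s$, and for fixed $s$ affine in $r$. Consequently $R^{D^r_s} = R^c + d^{\nabla^c}\Psi + \Psi\wedge\Psi$, and the ``Kähler-like defect'' $\mathcal Q(r,s)$, obtained by extracting from $R^{D^r_s}$ the components forbidden by \eqref{bnc}--\eqref{type_1}, is a tensor-valued polynomial of degree at most two in each variable separately. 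The whole argument exploits this together with the reductions already available.

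First I would use Theorem \ref{thm4} as a sieve: if either $(r,s)$ or $(r',s')$ lies in $\Omega'$, then $g$ is Kähler and there is nothing to prove. Hence I may assume both points lie in $\Omega\setminus\Omega' = \{s=0\}\cup\{\nabla,\nabla',\nabla^{+},\nabla^{-}\}$, so each connection is either a Gauduchon connection $D^{r}$ or one of the four special connections. A short case analysis disposes of most configurations. If both are Gauduchon, then two distinct points of the Gauduchon line are Kähler-like and Theorem \ref{thm2} applies. If one is $\nabla^{+}$ or $\nabla^{-}$ and the other is a Gauduchon connection $D^{r}$ with $r\neq -1$, then Theorem \ref{thm3} upgrades the $\nabla^{\pm}$ Kähler-likeness to that of $\nabla^s=D^{-1}$, again producing two distinct Kähler-like Gauduchon connections, so Theorem \ref{thm2} applies. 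The four excluded pairs are exactly the residues of this step where $D^{r}=\nabla^s$, namely $\{\nabla^{\pm},\nabla^s\}$ and (after Theorem \ref{thm3} collapses both special connections to $\nabla^s$) $\{\nabla^{+},\nabla^{-}\}$, together with $\{\nabla,\nabla'\}$.

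What survives are the configurations involving $\nabla$ or $\nabla'$. Using the equivalence that $\nabla'$ is Kähler-like if and only if $\nabla$ is, together with Theorem \ref{thm3}, each such pair reduces to a single statement: the Riemannian connection $\nabla$ and some Gauduchon connection $D^{r}$ are simultaneously Kähler-like, and one must deduce that $g$ is Kähler (here $r=-1$ covers the pairs built from $\nabla^{\pm}$, while $r$ arbitrary covers $\{D^r,\nabla\}$ and $\{D^r,\nabla'\}$). For this crux I would first record that, since $\nabla$ Kähler-like forces $\nabla'$ Kähler-like, both $s=1$ and $s=-1$ are zeros of the quadratic $s\mapsto \mathcal Q(0,s)$, whence $\mathcal Q(0,s) = \mathcal Q(0,0)\,(1-s^2)$. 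In the sub-case $r=0$ this already finishes: the hypothesis that $D^0$ is Kähler-like gives $\mathcal Q(0,0)=0$, so $\mathcal Q(0,s)\equiv 0$, and thus $D^0_{2}$, a connection lying in $\Omega'$, is Kähler-like; Theorem \ref{thm4} yields the conclusion.

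The remaining sub-case, $\nabla$ and $D^{r}$ Kähler-like with $r\neq 0$, is the main obstacle, and it is where the explicit tensorial form of the Kähler-like equations (as derived for Theorem \ref{thm4}) must be used. By the reduction above it suffices to prove that the Lichnerowicz defect $\mathcal Q(0,0)$ vanishes, for then $\mathcal Q(0,s)\equiv 0$ and Theorem \ref{thm4} applies at $(0,2)$ as before. I would expand both Kähler-like conditions --- that of the non-Hermitian connection $\nabla$, for which \eqref{bnc} is automatic (the Levi-Civita curvature satisfies the first Bianchi identity), so \eqref{type_1} carries all the information, and that of the Hermitian connection $D^{r}$, which reduces as in \cite{AOUV} to the vanishing of $R^{D^r}(X,Y,Z,\overline W)$ together with the interchange symmetry in the mixed components --- in terms of the Chern torsion and its covariant derivatives, and then combine the resulting identities. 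The delicate point is that neither condition alone is rigid, so one must find the right contractions of the two systems that force the torsion to vanish; I expect this to parallel, but to be more involved than, the $r'=\xi(r)$ analysis that constitutes the hard part of Theorem \ref{thm2}, the new feature being the genuinely non-Hermitian terms contributed by $\nabla$.
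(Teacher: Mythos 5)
Your reduction is sound and is in fact the same one the paper uses: Theorem \ref{thm4} confines both points to $\{s=0\}\cup\{\nabla,\nabla',\nabla^{+},\nabla^{-}\}$, Theorem \ref{thm2} kills the Gauduchon--Gauduchon pairs, Theorem \ref{thm3} converts $\nabla^{\pm}$ into $\nabla^{s}=D^{-1}$, and the $\nabla\leftrightarrow\nabla'$ equivalence leaves exactly one crux: $\nabla$ and a Gauduchon connection $D^{r}$ simultaneously K\"ahler-like must force $g$ K\"ahler. Your treatment of the sub-case $r=0$ is correct and is a genuinely different (and slick) argument: since $R^{D^{0}_{s}}$ is quadratic in $s$ and the K\"ahler-like defect is linear in the curvature, vanishing at $s=0,1,-1$ gives vanishing for all $s$, so $D^{0}_{2}$ is K\"ahler-like with $(0,2)\in\Omega'$ and Theorem \ref{thm4} finishes; the paper instead combines Lemma \ref{lemma5} with the symmetry $T^{j}_{ik,\bar\ell}=T^{\ell}_{ik,\bar j}$ to get $|\eta|^{2}=|T|^{2}$ and then invokes Proposition \ref{GKLr=0}.

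However, for the main case $r\neq 0$ you have no proof, only a declaration that one ``must find the right contractions,'' and this case is precisely where the real content of Theorem \ref{thm5} lies: it covers the pairs $\{\nabla,D^{r}\}$, the pairs $\{\nabla,\nabla^{\pm}\}$ (which your own reduction sends to $r=-1$), and the pair $\{\nabla,\nabla^{c}\}$ extending \cite[Theorem 2]{YZ} to the noncompact setting. Note also that your proposed target for this case --- showing the Lichnerowicz defect $\mathcal{Q}(0,0)$ vanishes --- is not obviously any easier than the original problem; nothing in the two hypotheses hands you $D^{0}$ K\"ahler-likeness without doing the tensor work. What the paper actually does here is: extract from Lemma \ref{lemma10} (at the Riemannian point, $t=1$) the identities \eqref{eq:RKLpT} and \eqref{eq:RKLdbarT}; combine them with Lemma \ref{lemma2} and the comparison formulas \eqref{eq:deltaT}, \eqref{eq:deltaTbar} to obtain $T^{j}_{ik,\ell}=0$, $\sum_{q}T^{q}_{ik}T^{j}_{\ell q}=0$, and then, via Lemma \ref{lemma5}, the scalar identity
\begin{equation*}
\bigl(r-(\sqrt{2}-1)\bigr)\bigl(r+\sqrt{2}+1\bigr)\,|\eta|^{2}+(r-1)^{2}\,|T|^{2}=0 ,
\end{equation*}
which immediately settles $r\leq-\sqrt{2}-1$ and $r\geq\sqrt{2}-1$, $r\neq 1$; for $-\sqrt{2}-1<r<\sqrt{2}-1$ one shows $\phi$ is nilpotent (hence $\eta=0$, hence $T=0$), and for $r=1$ one needs a further trace identity \eqref{eq:Trc} together with the rank-reduction argument with the operators $A_{X}$ from the proof of Lemma \ref{lemma13}. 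None of this is routine bookkeeping, and without it your proposal proves strictly less than the theorem.
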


Note that the result above can be regarded as an extension of \cite[Theorem 2]{YZ}, which shows that a compact Hermitian manifold which is both Chern K\"ahler-like and Riemannian K\"ahler-like must be K\"ahler, to the noncompact case.

Theorem \ref{thm2} and Theorem \ref{thm5} motivate us to propose the following

\begin{conjecture}
Let $(M^n,g,g')$ be a compact Hermitian manifold endowed with two possibly different Hermitian metrics $g$ and $g'$.
Assume that the connection $D^r_s$ associated to $g$ and the one $D^{r'}_{s'}$ associated to $g'$ are both K\"ahler-like,
where $(r,s)$ and $(r',s')$ are distinct points in $\Omega$ such that the pair is not one of the four exceptional pairs listed in Theorem \ref{thm5}. Then $M^n$ admits a K\"ahler metric.
\end{conjecture}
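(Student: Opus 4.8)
First I would collapse most of the parameter region using the single-metric results already established, and only then confront the genuinely new difficulty: the two K\"ahler-like hypotheses are imposed on two \emph{different} metrics, hence on connections whose torsions are unrelated. So a purely pointwise argument is hopeless, and compactness (built into the statement, unlike Theorems \ref{thm2} and \ref{thm5}) must be used in an essential, global way.

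\textbf{Step 1 (reduction via Theorems \ref{thm1}, \ref{thm3}, \ref{thm4}).} If $(r,s)\in\Omega'$ then Theorem \ref{thm4} makes $g$ itself K\"ahler and $M^n$ admits a K\"ahler metric; likewise if $(r',s')\in\Omega'$. Hence I may assume both parameter points lie in $\Omega\setminus\Omega'=\{s=0\}\cup\{\nabla,\nabla',\nabla^+,\nabla^-\}$. On the Gauduchon line $\{s=0\}$, Theorem \ref{thm1} forces $g$ to be K\"ahler except in the open range $r\in(-3-2\sqrt{3},0)\cup(0,-3+2\sqrt{3})$ and at $r=1$; and by Theorem \ref{thm3} the special points $\nabla^{\pm}$ are equivalent to the Strominger case $r=-1$. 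Thus each of $g,g'$ falls into one of the rigid non-K\"ahler classes: Chern K\"ahler-like (CKL, $r=1$), Strominger K\"ahler-like (SKL, $r=-1$, including $\nabla^{\pm}$), Riemannian K\"ahler-like (RKL, via $\nabla$ or $\nabla'$), or open-range Gauduchon. A short bookkeeping check shows that distinctness of the two points together with the exclusion of the four exceptional pairs rules out CKL+CKL, SKL+SKL and RKL+RKL, leaving the in-scope combined cases CKL+SKL, CKL+RKL, SKL+RKL, and the cases in which one connection is an open-range Gauduchon connection.

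\textbf{Step 2 (structural consequences and a global bridge).} For the surviving cases my strategy is to translate each K\"ahler-like hypothesis into a complex-analytic constraint on the \emph{shared} complex structure $J$, and then to couple the two constraints globally. The one consequence I may take for granted is that an SKL metric is pluriclosed ($\partial\bar\partial\omega=0$), as shown in \cite{ZZ,ZZ1}. The first real task is to establish analogous metric-type consequences for CKL and RKL metrics, the most useful target being balancedness ($d\omega^{n-1}=0$) or another special-metric condition. If a CKL (resp.\ RKL) metric can be shown to be balanced, then the mixed cases SKL+CKL and SKL+RKL reduce to the problem of a compact complex manifold carrying \emph{both} a pluriclosed and a balanced metric --- the Fino--Vezzoni problem --- whose affirmative answer (K\"ahler) is known in several settings, including nilmanifolds and low dimensions. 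As a first concrete milestone I would verify the conjecture outright on compact complex nilmanifolds and in dimension $3$, using the existing classifications of SKL manifolds \cite{ZZ,ZZ1,YZZ} together with the corresponding descriptions of CKL and RKL manifolds; this would at least confirm the conjecture wherever the geometry is fully understood.

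\textbf{Step 3 (the main obstacle).} The crux, and the reason compactness cannot be dropped here as it was in Theorems \ref{thm2} and \ref{thm5}, is that there is no pointwise algebraic identity relating the curvature $R^{D^r_s}$ of $g$ to $R^{D^{r'}_{s'}}$ of $g'$: the two conditions live on different metrics with different torsions, so the algebraic, pointwise mechanism driving Theorems \ref{thm2} and \ref{thm5} is simply unavailable. One is forced to couple the two metrics through a genuinely global argument --- integration, Hodge theory in Bott--Chern/Aeppli cohomology, Gauduchon's theorem, or the $\partial\bar\partial$-lemma --- in order to manufacture a closed positive $(1,1)$-form, i.e.\ a K\"ahler class. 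Two ingredients look hardest. First, in the open Gauduchon range $r\in(-3-2\sqrt{3},0)\cup(0,-3+2\sqrt{3})$ even the single-metric Conjecture \ref{cj1} is still open, so a complete two-metric result there cannot precede a resolution of Conjecture \ref{cj1}. Second, the CKL+RKL case appears to lie outside any pluriclosed-versus-balanced dichotomy, so the Fino--Vezzoni route need not apply and a new mechanism for coupling the two structures would be required. I therefore expect the full conjecture to demand both new structure theorems for CKL and RKL metrics and a global existence argument for a K\"ahler class, with the nilmanifold and three-dimensional cases, and the SKL-mixed cases via Fino--Vezzoni, as the natural first steps.
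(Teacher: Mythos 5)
The statement you are asked about is not a theorem of the paper but a conjecture: the paper offers no proof of it, only the remark that a few pairs are already confirmed, so there is no internal proof to measure your proposal against --- and your proposal, by your own admission in Step 3, is a research programme rather than a proof, so the conjecture remains exactly as open after your argument as before. Judged on its own terms, the plan contains one error of fact and one structural detour. The error: you present balancedness of CKL and RKL metrics as ``the first real task'' of Step 2, but this is already known --- a compact Chern or Riemannian K\"ahler-like metric is balanced by \cite{YZ}, and the Lichnerowicz K\"ahler-like case is Proposition \ref{GKLr=0}; the paper cites precisely these facts when listing its confirmed cases. The detour: for the mixed SKL pairs you route through the Fino--Vezzoni problem (a compact complex manifold carrying both a pluriclosed and a balanced metric must be K\"ahler), which is open in general and would confirm the conjecture only in restricted settings such as nilmanifolds. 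The paper's mechanism is stronger and unconditional: by \cite[Theorem 6]{ZZ}, a non-K\"ahler compact Strominger K\"ahler-like manifold admits \emph{no} balanced metric whatsoever, so if $g$ is SKL and $g'$ is CKL, RKL, or Lichnerowicz K\"ahler-like (hence balanced), then $g$ must already be K\"ahler. This settles the pairs $\{\nabla^s,\nabla^c\}$, $\{\nabla^s,\nabla\}$ and $\{\nabla^s,D^0\}$ outright, with no appeal to Fino--Vezzoni.

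On the positive side, your Step 1 reduction is sound: Theorem \ref{thm4} disposes of any parameter point in $\Omega'$, Theorem \ref{thm3} identifies $\nabla^{\pm}$ with the Strominger case, the compact Theorem \ref{thm1} leaves on the Gauduchon line only $r=1$, $r=-1$ and the open range $(-3-2\sqrt{3},0)\cup(0,-3+2\sqrt{3})$, and your bookkeeping that distinctness plus the four excluded pairs eliminates CKL+CKL, SKL+SKL and RKL+RKL is correct. Your diagnosis of the hard cases is also accurate and matches what the paper implicitly leaves open: the open-range Gauduchon cases subsume Conjecture \ref{cj1} and so cannot be closed first, and for pairs with no SKL member (for instance CKL together with RKL on two different metrics) the pointwise torsion identities driving Theorems \ref{thm2} and \ref{thm5} are unavailable and no global coupling mechanism is currently known. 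In short: correct framing and correct identification of the obstructions, one ``task'' that is already a cited theorem, one conditional route the paper's own partial confirmations avoid, and no proof --- which is the expected outcome for a statement the paper itself poses as a conjecture.
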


Several partial cases have already been confirmed in light of \cite[Theorem 6]{ZZ}, which shows that non-K\"ahler compact Strominger K\"ahler-like manifolds admit no balanced metric. Hence the cases for $\{\nabla^s,\nabla^c\}$, $\{\nabla^s,\nabla\}$ and $\{\nabla^s,D^0\}$ are established, as compact Chern, Riemannian or Lichnerowicz K\"ahler-like manifold is necessarily balanced by \cite{YZ} and Proposition \ref{GKLr=0} below.

\vspace{0.3cm}

\section{Properties of Gauduchon K\"ahler-like manifolds}\label{gkl}

Let $(M^n,g)$ be a Hermitian manifold with $\omega$ the associated K\"ahler form of $g$. Denote by $\nabla$ the Riemannian (Levi-Civita) connection, and by
$$D^r=\frac{1+r}{2}\nabla^c + \frac{1-r}{2}\nabla^s, \ \ \ r\in {\mathbb R}$$
the Gauduchon connections, where  $\nabla^c$ is the Chern connection and $\nabla^s$ is the Strominger (also known as the Bismut) connection.

Fix any $p \in M^n$, let $\{ e_1, \ldots , e_n\} $ be a frame of $(1,0)$-tangent vectors of $M^n$ in a neighborhood of $p$, with $\{ \varphi_1, \ldots , \varphi_n\}$ being its dual coframe of $(1,0)$-forms. The symbols $e=\,^t\!(e_1, \ldots , e_n)$ and $\varphi = \,^t\!( \varphi_1, \ldots , \varphi_n)$ are reserved for the column vectors. Let  $\langle \ , \rangle $ be the (real) inner product given by the Hermitian metric $g$, and extend it bilinearly over ${\mathbb C}$. Following the notations of  \cite{YZ,YZ1}, we may write under the frame $e$:
\[\begin{cases}\nabla e = \theta_1 e + \overline{\theta_2 }\,\overline{e} \\
\nabla \overline{e} = \theta_2 e + \overline{\theta_1}\,\overline{e}
\end{cases}\]
with the matrices of connection and curvature of $\nabla $ given by
$$ \hat{\theta } = \left[ \begin{array}{ll} \theta_1 & \overline{\theta_2 } \\ \theta_2 & \overline{\theta_1 }  \end{array} \right] , \ \  \  \hat{\Theta } = \left[ \begin{array}{ll} \Theta_1 & \overline{\Theta}_2  \\ \Theta_2 & \overline{\Theta}_1   \end{array} \right], $$
 where
\begin{eqnarray}
\Theta_1 & = & d\theta_1 -\theta_1 \wedge \theta_1 -\overline{\theta_2} \wedge \theta_2, \nonumber \\
\Theta_2 & = & d\theta_2 - \theta_2 \wedge \theta_1 - \overline{\theta_1 } \wedge \theta_2,  \nonumber \\
d\varphi & = & - \ ^t\! \theta_1 \wedge \varphi - \ ^t\! \theta_2 \label{eq:structure12}
\wedge \overline{\varphi } .
\end{eqnarray}
Similarly, let $\theta$ and $\tau$ be respectively the connection matrix and torsion column vector under $e$ for the Chern connection $\nabla^c$, then the structure equations and Bianchi identities are
\begin{eqnarray}
d \varphi & = & - \ ^t\!\theta \wedge \varphi + \tau,  \label{eq:structure1} \\
d  \theta & = & \theta \wedge \theta + \Theta. \nonumber \\
d \tau & = & - \ ^t\!\theta \wedge \tau + \ ^t\!\Theta \wedge \varphi, \label{eq:Bianchi1}\\
d  \Theta & = & \theta \wedge \Theta - \Theta \wedge \theta. \nonumber
\end{eqnarray}
Note that the entries of the curvature matrix $\Theta$ are all $(1,1)$-forms, while the entries of the column vector $\tau $ are all $(2,0)$-forms, under any frame $e$. Let $\gamma = \theta_1-\theta$, with $\gamma = \gamma' + \gamma ''$ regarded as its decomposition into $(1,0)$ and $(0,1)$-parts, then $\gamma$ is the matrix under $e$ of the tensor $D^0-\nabla^c$, while $\overline{\theta_2}$ is the matrix under $e$ of the tensor $\nabla - D^0$.  It follows from \cite{YZ} that when $e$ is unitary, the matrices $\gamma$ and $\theta_2$ amount to
\begin{equation}\label{eq:gm+tht2}
\gamma_{ij} = \sum_k \{ T^j_{ik}\varphi_k  - \overline{T^i_{jk}} \,\overline{\varphi}_k \} , \ \ \ (\theta_2)_{ij} = \sum_k \overline{T^k_{ij} } \,\varphi_k ,
\end{equation}
where $T^k_{ij}$, satisfying $T^k_{ji}=-T^k_{ij}$, are the components of the Chern torsion, given by
\[\tau_k=\sum_{i,j=1}^n T^k_{ij} \varphi_i \varphi_j = 2 \sum_{i<j} T^k_{ij} \varphi_i \varphi_j .\]

As to the Gauduchon connection $D^r$, the matrices of connection and curvature under $e$ are
$$ \theta^r = \theta + (1-r) \gamma , \ \ \ \ \ \Theta^r = d\theta^r - \theta^r \wedge \theta^r.$$
It is clear from the definition that the Gauduchon connection $D^r$ is K\"ahler-like if and only if $\,^t\!\varphi \, \Theta^r=0$. Also, by the same proof of \cite[Lemma 4]{YZ}, it follows that

\begin{lemma}\label{lemma1}
Let $(M^n,g)$ be a Hermitian manifold. Given any $r \in {\mathbb R}$ and $p\in M$, there exists a unitary frame $e$ of $(1,0)$-tangent vectors in a neighborhood of $p$, such that the connection matrix $\theta^r|_p=0$.
\end{lemma}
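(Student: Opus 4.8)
The plan is to reduce the statement to the standard fact that a Hermitian connection can be ``gauged away'' to first order at a single point by a unitary change of frame. First I would observe that $D^r$ is a Hermitian connection: it satisfies $D^r g=0$ and $D^r J=0$, being the affine combination $\frac{1+r}{2}\nabla^c+\frac{1-r}{2}\nabla^s$ of two Hermitian connections, and both metric compatibility and $J$-invariance are preserved under affine combinations of connections. Consequently, under any unitary frame $e$ of $(1,0)$-vectors near $p$, the connection matrix $\theta^r$ is skew-Hermitian. Indeed, writing $D^r e=\theta^r e$ and differentiating the defining relations $\langle e_i,\overline{e}_j\rangle=\delta_{ij}$ and $\langle e_i,e_j\rangle=0$ of a unitary frame, one obtains $(\theta^r)_{ij}+\overline{(\theta^r)_{ji}}=0$; that is, $\theta^r$ takes values in $\mathfrak{u}(n)$ as a matrix of (complex) $1$-forms.

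Next I would carry out the gauge transformation. Starting from an arbitrary unitary frame $e$ near $p$, replace it by $\tilde e=a\,e$, where $a$ is a $U(n)$-valued function with $a(p)=I$. Under this change the connection matrix transforms as
\[
\tilde\theta^{\,r}=a\,\theta^r a^{-1}+(da)\,a^{-1},
\]
so that at the point $p$ one has $\tilde\theta^{\,r}|_p=\theta^r|_p+da|_p$. It therefore suffices to produce such an $a$ with $da|_p=-\theta^r|_p$. To do this, fix real local coordinates $(x^1,\dots,x^{2n})$ centered at $p$ and write $\theta^r|_p=\sum_\mu A_\mu\,dx^\mu$; since $\theta^r$ is skew-Hermitian and the $dx^\mu$ are real, each coefficient matrix $A_\mu$ lies in $\mathfrak{u}(n)$. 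Then set
\[
a(x)=\exp\Big(-\sum_\mu A_\mu\,x^\mu\Big).
\]
Because $\mathfrak{u}(n)$ exponentiates into $U(n)$, the map $a$ is $U(n)$-valued, with $a(p)=I$ and $da|_p=-\sum_\mu A_\mu\,dx^\mu=-\theta^r|_p$. Hence $\tilde e=a\,e$ is again a unitary frame, and $\tilde\theta^{\,r}|_p=0$, as required.

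The substance of the argument, and the only point where care is needed, is the skew-Hermitian property of $\theta^r$: it is precisely this that matches the prescribed first-order data $-\theta^r|_p$ with the tangent space $\mathfrak{u}(n)=T_I U(n)$ of the allowed gauge group, so that the correcting gauge $a$ may be taken unitary and the new frame stays unitary. This is exactly the mechanism behind the proof of \cite[Lemma 4]{YZ} for the Chern connection; the same reasoning applies verbatim once one records that $\theta^r=\theta+(1-r)\gamma$ is the connection matrix of the Hermitian connection $D^r$ under the unitary frame $e$. I do not expect any genuine obstacle, as the lemma is a pointwise normalization requiring only the Hermitian (metric and $J$-preserving) nature of $D^r$.
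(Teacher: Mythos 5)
Your proof is correct and is essentially the paper's own argument: the paper proves Lemma \ref{lemma1} simply by invoking ``the same proof of \cite[Lemma 4]{YZ},'' which is precisely the gauge-fixing you spell out, with the key observation (also noted in the paper's remark that the statement holds for any Hermitian connection) being that $D^r$ is Hermitian, so $\theta^r$ is $\mathfrak{u}(n)$-valued under a unitary frame and can be killed at $p$ by a $U(n)$-valued gauge transformation $a=\exp\bigl(-\sum_\mu A_\mu x^\mu\bigr)$.
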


In other words, one can always choose a local unitary frame such that the connection matrix of $D^r$ vanishes at a given point. Of course, the same property holds for any Hermitian connection $D$ on $M$, not just the Gauduchon connections $D^r$. Some $r \in {\mathbb R}$ and $p\in M$ will be frequently fixed in the calculation below, where a local unitary frame $e$ such that $\theta^r|_p=0$ is applied.

Modifying the first part of \cite[Lemma 3.1]{YZ1} for the Gauduchon flat case, we have the following:

\begin{lemma}\label{lemma2}
Let $(M^n,g)$ be a Hermitian manifold such that the Gauduchon connection $D^r$ is K\"ahler-like, where $r \neq 1$.
Then, under any local unitary frame $e$, the Chern torsion components satisfy
\begin{eqnarray}
& & T_{ik,j}^{\ell } \ = \    -(1+r ) \sum_{q}  T_{ik }^q T_{jq}^{\ell}   , \label{eq:rTderi} \\
& & r \sum_q ( T_{ij }^q T_{kq}^{\ell} + T_{ki }^q T_{jq}^{\ell} + T_{jk }^q T_{iq}^{\ell}  ) \ = \ 0, \label{eq:rTTcyclic}
\end{eqnarray}
for any $i$, $j$, $k$, $\ell$, where the indices after comma mean covariant derivatives with respect to $D^r$.
\end{lemma}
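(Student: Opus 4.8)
\emph{Strategy.} The plan is to derive both identities from a single fact, namely the vanishing of the $(2,0)$-part of the Gauduchon curvature. By the characterization recalled in the introduction (for a Hermitian connection the only possibly nonzero curvature components are of type $R^{D^r}(X,\overline Y,Z,\overline W)$), the K\"ahler-like hypothesis forces
$$ (\Theta^r)^{(2,0)}=0, \qquad\text{and hence also}\qquad {}^t\!\varphi\,(\Theta^r)^{(2,0)}=0 .$$
These are precisely the ingredients of the ``Gauduchon flat'' computation in the first part of \cite[Lemma 3.1]{YZ1} that survive under the weaker K\"ahler-like assumption, so I expect the argument there to carry over. Throughout I would fix $p$ and use Lemma \ref{lemma1} to pick a unitary frame $e$ with $\theta^r|_p=0$, whence $\theta'|_p=-(1-r)\gamma'|_p$ and $\Theta^r|_p=d\theta^r|_p$.

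\emph{The master identity.} Expanding $\theta^r=\theta+(1-r)\gamma$, using the Chern structure equations, and recalling that the Chern curvature $\Theta$ is of type $(1,1)$, the $(2,0)$-part collapses to
$$ (\Theta^r)^{(2,0)}\big|_p=(1-r)\big(\partial\gamma'+(1-r)\,\gamma'\wedge\gamma'\big)\big|_p .$$
I would then substitute $\gamma'_{ij}=\sum_k T^j_{ik}\varphi_k$ and $\partial\varphi_k|_p=(1-r)\sum_i\gamma'_{ik}\wedge\varphi_i+\tau_k$, and identify $\partial T^j_{ik}|_p$ with $\sum_m T^j_{ik,m}\varphi_m$. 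After dividing by $(1-r)$ (here $r\neq1$ is used), the coefficient of $\varphi_a\wedge\varphi_b$ produces a master identity of the schematic shape
$$ T^j_{ib,a}-T^j_{ia,b}=-\,2r\sum_k T^j_{ik}T^k_{ab}-(1-r)\sum_m\big(T^m_{ia}T^j_{mb}-T^m_{ib}T^j_{ma}\big). $$

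\emph{Deriving \eqref{eq:rTTcyclic}.} To obtain the cyclic relation I would pass to the first Bianchi identity $d\tau^r=-\,{}^t\!\theta^r\wedge\tau^r+{}^t\!\Theta^r\wedge\varphi$ for $D^r$ (the analogue of \eqref{eq:Bianchi1}). Since ${}^t\!\varphi\,\Theta^r=0$ this gives $d\tau^r|_p=0$. The crucial computation is that the $(2,0)$-part of the $D^r$-torsion is $\tau^{r,(2,0)}=r\,\tau$, which follows at once from $\tau^r=\tau+(1-r)\,{}^t\!\gamma\wedge\varphi$ and ${}^t\!\gamma'\wedge\varphi=-\tau$. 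Thus the $(3,0)$-part of $d\tau^r|_p=0$ reads $r\,\partial\tau|_p=0$; combining this with the Chern Bianchi identity \eqref{eq:Bianchi1}, which at $p$ gives $\partial\tau_k|_p=(1-r)\sum_i\gamma'_{ik}\wedge\tau_i$, yields (after cancelling the factor $1-r$) exactly the purely quadratic cyclic identity \eqref{eq:rTTcyclic}, the surviving factor $r$ being the one produced by $\tau^{r,(2,0)}=r\tau$.

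\emph{Deriving \eqref{eq:rTderi} and the main obstacle.} The master identity only constrains $T^\ell_{ik,j}$ antisymmetrized in its differentiation slot against a torsion index, whereas \eqref{eq:rTderi} prescribes the full covariant derivative. I would recover it by a Koszul-type manoeuvre: since the Chern torsion is antisymmetric in its lower pair, $T^\ell_{ik}=-T^\ell_{ki}$, the derivative $T^\ell_{ik,j}$ is antisymmetric in $(i,k)$; writing the master identity for the three cyclic choices of which index serves as the differentiation slot and adding them with the signs dictated by this antisymmetry isolates each $T^\ell_{ik,j}$ as a quadratic expression in the torsion. Reducing that expression by means of \eqref{eq:rTTcyclic} should collapse it to $-(1+r)\sum_q T^q_{ik}T^\ell_{jq}$, giving \eqref{eq:rTderi}. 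I expect the main difficulty to lie exactly in these last two steps: keeping the three distinct torsion-contraction patterns apart while passing from the wedge-antisymmetrized master identity to the un-antisymmetrized formula \eqref{eq:rTderi}, and verifying that the leftover quadratic terms assemble precisely into the cyclic combination \eqref{eq:rTTcyclic}.
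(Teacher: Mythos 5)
Your proposal is correct and follows essentially the same route as the paper's proof: the same normalization $\theta^r|_p=0$ from Lemma \ref{lemma1}, the same master identity obtained from $(\Theta^r)^{2,0}=(1-r)\left(\partial\gamma'+(1-r)\gamma'\gamma'\right)=0$ (your ``master identity'' is exactly the paper's \eqref{eq:middle}), and the same cyclic-summation manoeuvre using $T^{\ell}_{ik}=-T^{\ell}_{ki}$ together with reduction by \eqref{eq:rTTcyclic} to arrive at \eqref{eq:rTderi}. The only (cosmetic) divergence is your derivation of \eqref{eq:rTTcyclic}: you combine $r\,\partial\tau|_p=0$, read off from the $(3,0)$-part of the Bianchi identity of $D^r$ via $\tau^{r,(2,0)}=r\tau$, with the $(3,0)$-part of the Chern Bianchi identity $\partial\tau|_p=(1-r)\,^t\!\gamma'\wedge\tau$, whereas the paper substitutes $\partial\gamma'=-(1-r)\gamma'\gamma'$ directly into the $(3,0)$-part of the Chern Bianchi identity; the two manipulations are equivalent and both correctly account for the surviving factor $r$.
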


\begin{proof}
Fix any $p\in M$ and the identities above will be verified at $p$. As both sides are tensors, without loss of generality, the unitary frame $e$ with the vanishing $\theta^r$ at $p$ is applied. The unitary frame $e$ leads to the equality $\gamma'_{ij} = \sum_k T^j_{ik}\varphi_k$, which implies that
\begin{equation}
\, ^t\!\gamma' \varphi = - \tau . \label{eq:taugamma}
\end{equation}
The choice of $e$ above forces $\theta=(r -1) \gamma$ at $p$, which yields that
\begin{equation}  \label{eq:taugammavarphi}
\partial \varphi = -r  \, ^t\!\gamma' \varphi  = r\tau , \ \ \overline{\partial} \varphi =(r -1) \overline{\gamma'}\varphi, \ \ \Theta^r -\Theta = (1-r )d\gamma + (1-r)^2 \gamma \gamma.
\end{equation}
Since $D^r$ is K\"ahler-like, which is equivalent to $\,^t\!\varphi \, \Theta^{r}=0$, hence, in particular, the $(0,2)$-part of $\Theta^r$, and thus the $(2,0)$-part of $\Theta^r$, vanishes. It follows that,
$$ 0 = (\Theta^{r})^{2,0} = (1-r ) \partial \gamma' + (1-r )^2\gamma' \gamma'. $$
The assumption $r \neq 1$ enables us to get $\partial \gamma' + (1-r )\gamma' \gamma' =0$, which implies that
\begin{equation}
 T_{ik,j}^{\ell} - T_{ij,k}^{\ell} = 2r\, T^q_{kj}T^{\ell}_{iq} + (1-r) T^q_{ij}T^{\ell}_{kq} - (1-r) T^q_{ik}T^{\ell}_{jq}. \label{eq:middle}
 \end{equation}
By the equality (\ref{eq:taugamma}) and the fact that $\theta = (r-1)\gamma$ at $p$, the $(3,0)$-part of the first Bianchi identity $d\tau = \,^t\!\Theta \varphi - \,^t\!\theta \tau$ gives us
$$  \,^t\!\varphi \, \partial \gamma' - \partial \,^t\!\varphi \, \gamma ' = (r-1) \,^t\!\varphi  \,\gamma' \gamma'.$$
Hence, from $\partial \gamma' =- (1-r )\gamma' \gamma' $ above, it yields that $r \, \,^t\!\varphi  \,\gamma' \gamma' =0$, which is the identity (\ref{eq:rTTcyclic}) in the lemma.  After (\ref{eq:rTTcyclic}) is plugged into (\ref{eq:middle}), it follows that
\begin{equation}
 T_{ik,j}^{\ell} - T_{ij,k}^{\ell} =  (1+r) T^q_{ij}T^{\ell}_{kq} - (1+r) T^q_{ik}T^{\ell}_{jq}. \label{eq:middle2}
 \end{equation}
The index $(ijk)$ can be replaced by $(jki)$ and $(kij)$ in the equality above and all the three equalities are summed up, which yields
\begin{equation}
 -2( T_{ki,j}^{\ell} + T_{ij,k}^{\ell} + T_{jk,i}^{\ell} ) =  2(1+r) ( T^q_{ij} T^{\ell}_{kq} + T^q_{ki}T^{\ell}_{jq} + T^q_{jk} T^{\ell}_{iq}  ) . \label{eq:middle3}
 \end{equation}
By the comparison of (\ref{eq:middle3}) with (\ref{eq:middle2}), the identity (\ref{eq:rTderi}) of the lemma follows.
\end{proof}

Similarly, the modification of the second part of \cite[Lemma 3.1]{YZ1} for the Gauduchon flat case with one step further yields:

\begin{lemma}\label{lemma3}
Let $(M^n,g)$ be a Hermitian manifold such that the Gauduchon connection $D^r$ is K\"ahler-like. If $r=\frac{1}{2}$, then $g$ is K\"ahler. In general, under any local unitary frame $e$, the Chern torsion components satisfy
\begin{eqnarray}
4r (2r-1) T_{ik,\,\overline{\ell }}^j  & = & 4r^2(r-1) \sum_q T_{ik}^q \overline{ T_{j\ell }^q } + (r-1)(5r^2-1) \sum_q \{ T_{iq}^j \overline{ T_{\ell q}^k }  -   T_{kq}^j \overline{ T_{\ell q}^i }  \} \nonumber \\
 & & \ - (r-1)^3 \sum_q \{ T_{iq}^{\ell} \overline{ T_{j q}^k }  - T_{kq}^{\ell} \overline{ T_{j q}^i }  \}, \label{eq:rTderibar}
\end{eqnarray}
for any $i$, $j$, $k$, $\ell$, where the indices after comma mean covariant derivatives with respect to $D^r$.
\end{lemma}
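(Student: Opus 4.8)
\subsection*{Proof proposal}

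The plan is to extract the barred-derivative identity from the $(1,1)$-part of the curvature, in parallel with the way Lemma \ref{lemma2} exploited the $(2,0)$-part. First I fix $p$ and, exactly as in Lemma \ref{lemma2}, choose the unitary frame with $\theta^r|_p=0$, so that $\theta=(r-1)\gamma$ and hence $\Theta^r=\Theta+(1-r)d\gamma+(1-r)^2\gamma\gamma$ at $p$. Splitting into types, the $(1,1)$-component is
$$(\Theta^r)^{1,1}=\Theta+(1-r)\big(\overline{\partial}\gamma'+\partial\gamma''\big)+(1-r)^2\big(\gamma'\gamma''+\gamma''\gamma'\big).$$
The part of the K\"ahler-like condition $\,^t\!\varphi\,\Theta^r=0$ not yet used in Lemma \ref{lemma2} is its $(2,1)$-part: it says precisely that the $(1,1)$-curvature components are symmetric under interchange of the first and third holomorphic slots, $R^r_{i\bar j k\bar\ell}=R^r_{k\bar j i\bar\ell}$, and, taking conjugates together with metric compatibility, also under interchange of the second and fourth slots.

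Next I would expand everything in torsion. Using $\gamma'_{ij}=\sum_k T^j_{ik}\varphi_k$, $\gamma''_{ij}=-\sum_k\overline{T^i_{jk}}\,\overline{\varphi}_k$, together with $\overline{\partial}\varphi=(r-1)\overline{\gamma'}\varphi$ from (\ref{eq:taugammavarphi}), the term $\overline{\partial}\gamma'$ produces the barred covariant derivatives $T^\bullet_{\bullet\bullet,\bar\bullet}$, while $\partial\gamma''$ produces their complex conjugates $\overline{T^\bullet_{\bullet\bullet,\bar\bullet}}$; the remaining pieces $\gamma'\gamma''+\gamma''\gamma'$ and the structure-equation corrections are torsion-quadratic. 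Thus the slot symmetry $R^r_{i\bar jk\bar\ell}=R^r_{k\bar ji\bar\ell}$ becomes a linear relation among (i) the barred derivatives of $T$, (ii) their conjugates, (iii) the antisymmetrization $R^c_{i\bar jk\bar\ell}-R^c_{k\bar ji\bar\ell}$ of the Chern curvature, and (iv) torsion-quadratic terms.

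The crux is to remove (iii) and (ii). The Chern-curvature antisymmetrization in (iii) is itself a combination of barred derivatives of $T$ and torsion-quadratic terms, by the $(2,1)$-part of the first Bianchi identity (\ref{eq:Bianchi1}) for $\nabla^c$; its conjugate does the same for $\overline{T}$. After this substitution the slot-$(1,3)$ symmetry and its conjugate, read off for the index placements that occur, close up into a linear system for the barred derivatives of $T$ and of $\overline{T}$. I expect this elimination to be the main obstacle: the $\partial\gamma''$ term forces the $\overline{T}$-derivatives into the equation, and only by pairing the symmetry with its conjugate and invoking both the Chern Bianchi identity and its conjugate do they drop out. The $2\times2$ system governing the elimination has determinant a nonzero multiple of $r^2(2r-1)$; after the resulting common factor of $r$ cancels against the torsion-quadratic side, and the remainder is normalized using the cyclic identity (\ref{eq:rTTcyclic}) and (\ref{eq:rTderi}) of Lemma \ref{lemma2}, one isolates $T^j_{ik,\bar\ell}$ with the coefficient $4r(2r-1)$, which is (\ref{eq:rTderibar}).

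Finally, the case $r=\tfrac12$ falls out of (\ref{eq:rTderibar}) for free: the left coefficient $4r(2r-1)$ vanishes, so the identity collapses to a purely algebraic relation among the torsion components. Contracting it by setting $\ell=i$ and $j=k$ and summing over $i,k$, every surviving term becomes a squared norm, and the relation reduces to $\tfrac14\sum_{i,k,q}|T^q_{ik}|^2+\tfrac14\sum_q\big|\sum_k T^k_{kq}\big|^2=0$. As both summands are nonnegative, the Chern torsion vanishes at $p$; since $p$ is arbitrary, $T\equiv0$ and $g$ is K\"ahler.
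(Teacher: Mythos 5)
Your route is essentially the paper's own: the new information beyond Lemma \ref{lemma2} is indeed the $(2,1)$-part of $\,^t\!\varphi\,\Theta^r=0$ (equivalently, the symmetry $R^r_{i\bar{j}k\bar{\ell}}=R^r_{k\bar{j}i\bar{\ell}}$), the Chern curvature is eliminated through the $(2,1)$-part of the first Bianchi identity, and the conjugate derivatives are removed by pairing the resulting relation with its $(ik)\leftrightarrow(j\ell)$-conjugate and solving a linear system. The paper organizes this as a coefficient identity $P^{j\ell}_{ik}=0$, whose $j\leftrightarrow\ell$ antisymmetrization and conjugate give the pair $rx+(r-1)y=Q$, $(r-1)x+ry=Q$; note that this system has determinant $r^2-(r-1)^2=2r-1$, not a multiple of $r^2(2r-1)$, so the elimination only breaks down at $r=\frac12$ and the identity is obtained directly for every $r\neq\frac12$, including $r=0$ (which matters later: the $r=0$ case of (\ref{eq:rTderibar}) is exactly the relation (\ref{eq:RrTbarT:r=0}) on which Proposition \ref{GKLr=0} rests). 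Your contraction at the end (setting $\ell=i$, $j=k$ and summing to get $\frac14|T|^2+\frac14|\eta|^2=0$) is correct as algebra.

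The genuine gap is in how you close the case $r=\frac12$. By your own account the elimination requires the determinant to be nonzero, so it establishes (\ref{eq:rTderibar}) only for those $r$ where the system is invertible; you then invoke the identity \emph{at} $r=\frac12$, where your derivation does not apply. That is circular. The degenerate value must be treated separately, and the fix is exactly what the paper does: at $r=\frac12$ the two equations of the (now singular) system read $\frac12(x-y)=Q$ and $\frac12(y-x)=Q$, and adding them forces the compatibility condition $Q=0$; this $Q=0$ is precisely the algebraic relation whose contraction yields $|T|^2+|\eta|^2=0$, hence $g$ K\"ahler, after which (\ref{eq:rTderibar}) at $r=\frac12$ holds trivially. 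Relatedly, if your system really carried the extra factor $r^2$, the same objection would apply at $r=0$, and the assertion that ``the common factor of $r$ cancels against the torsion-quadratic side'' is exactly the point that would need verification; with the paper's arrangement of the elimination no such factor appears and $r=0$ causes no trouble.
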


\begin{proof}
Fix any $p\in M$ and let $e$ be a local unitary frame such that $\theta^r|_p=0$. From (\ref{eq:taugamma}), (\ref{eq:taugammavarphi}) and the $(2,1)$-part of the first Bichani identity $d\tau = \,^t\!\Theta \varphi - \,^t\!\theta \tau$, the $(2,1)$-part of $\, ^t\!\varphi \, \Theta^r =0$ amounts to
$$ \, ^t\!\varphi \wedge(r \overline{\partial} \gamma' - (r-1) \partial \,^t\!\overline{\gamma'} + r(r-1)\gamma' \,^t\!\overline{\gamma'} + r(r-1) \,^t\!\overline{\gamma'} \,\gamma' ) =0. $$
In terms of coefficients, it follows that $  P_{ik}^{j\ell }=0$, where
\begin{equation}
P_{ik}^{j\ell }  =  2r T^j_{ik,\overline{\ell}} + (r-1)y  - 2r(r-1) w - 2r(r-1) ( v^j_i - v^j_k) + (r-1)^2 (v^{\ell}_i - v^{\ell}_k ),  \label{eq:P=0}
\end{equation}
with the other symbols denoted by
\begin{gather*}
x = T^j_{ik,\overline{\ell}} - T^{\ell }_{ik, \overline{j}},
\quad y = \overline{ T^i_{j\ell , \overline{k}}  -  T^k_{j\ell , \overline{i}} },
\quad w  =  \sum_q T^q_{ik}\overline{T^q_{j\ell } },  \\
v^j_i = \sum_q T^j_{iq} \overline{T^k_{\ell q}},
\quad v^{\ell}_k = \sum_q T^{\ell}_{kq} \overline{T^i_{j q}},
\quad v^j_k = \sum_q T^j_{kq} \overline{T^i_{\ell q}} ,
\quad v^{\ell}_i = \sum_q T^{\ell}_{iq} \overline{T^k_{j q}}.
\end{gather*}
It yields, from $ P_{ik}^{j\ell} - P_{ik}^{\ell j} =0$, that
\begin{equation}
 2r x +2(r-1)y  =   4r(r-1)w + (r-1)(3r-1) \big( v^j_i-v^j_k - v^{\ell }_i + v^{\ell }_k \big). \label{eq:xy1}
\end{equation}
After $(ik)$ is interchanged with $(j\ell )$ in the identity \eqref{eq:xy1} above and complex conjugation is taken, it follows that
\begin{equation}
 2r y +2(r-1)x  =   4r(r-1)w + (r-1)(3r-1) \big( v^{\ell}_k-v^j_k - v^{\ell }_i + v^j_i \big). \label{eq:xy2}
\end{equation}
Denote by $2Q$ the right hand side of the two equalities \eqref{eq:xy1} and \eqref{eq:xy2} above, then it yields that
$$ rx+ (r-1)y = (r-1)x+ry = Q. $$
So if $r=\frac{1}{2}$, then it follows that $x=y$ and $Q=0$, namely, $ -w -\frac{1}{4}(v^j_i-v^j_k - v^{\ell }_i + v^{\ell }_k  ) =0$. Let $i=j$, $k=\ell$ and sum up $i$ and $k$. This implies $|T|^2 + |\eta |^2 =0$, where $\eta_k = \sum_i T^i_{ik}$, yielding that $T=0$, hence $g$ is K\"ahler.

On the other hand, if $r\neq \frac{1}{2}$, then the above system of linear equations \eqref{eq:xy1} and \eqref{eq:xy2} of $x$ and $y$ imply  $x=y=\frac{Q}{2r-1}$. After it is plugged back into (\ref{eq:P=0}), the identity stated in the lemma follows.
\end{proof}

Denote by $\eta_k$ the summation $\sum_i T^i_{ik}$ and thus $\eta = \sum_k \eta_k \varphi_k$ is the Gauduchon's torsion $1$-form. Let $i=j$ in the identity \eqref{eq:rTderibar} of Lemma \ref{lemma3} and sum it up, we obtain:

\begin{lemma} \label{lemma4}
Let $(M^n,g)$ be a Hermitian manifold with K\"ahler-like Gauduchon connection $D^r$. Then it holds that
\begin{equation*}
 4r(2r-1) \eta_{k,\overline{\ell}} = 4r^2(r-1)A_{k\overline{\ell}} +(r-1)(5r^2-1) (\overline{\phi^k_{\ell}} - A_{k\overline{\ell}} ) -(r-1)^3 ( B_{k\overline{\ell}}  - \phi_k^{\ell} )
\end{equation*}
for any indices $k$, $\ell$, where the indices after comma mean covariant derivatives with respect to $D^r$.
\end{lemma}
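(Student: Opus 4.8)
The plan is to obtain Lemma \ref{lemma4} as a direct consequence of the pointwise identity \eqref{eq:rTderibar} of Lemma \ref{lemma3}: I would set $j=i$ there and sum over $i$, then recognize each resulting torsion bilinear as one of the four quantities appearing in the statement. Explicitly, I take
\[ A_{k\overline{\ell}} = \sum_{i,q} T^q_{ik}\,\overline{T^q_{i\ell}}, \quad B_{k\overline{\ell}} = \sum_{i,q} T^{\ell}_{iq}\,\overline{T^k_{iq}}, \quad \phi^k_{\ell} = \sum_q \overline{\eta_q}\,T^k_{\ell q}, \quad \phi_k^{\ell} = \sum_q \overline{\eta_q}\,T^{\ell}_{kq}, \]
with $\eta_k=\sum_i T^i_{ik}$ as in the statement. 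For the left-hand side, since $\eta_k$ is the contraction of the Chern torsion over its upper index and first lower index and $D^r$ is a metric (Hermitian) connection, covariant differentiation commutes with this contraction; hence $\sum_i T^i_{ik,\overline{\ell}} = \eta_{k,\overline{\ell}}$ and the prefactor $4r(2r-1)$ survives unchanged.

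Next I would process the three terms on the right of \eqref{eq:rTderibar} one at a time. The first term becomes $4r^2(r-1)\sum_{i,q}T^q_{ik}\overline{T^q_{i\ell}} = 4r^2(r-1)A_{k\overline{\ell}}$ immediately. In the second term, the piece $\sum_{i,q}T^i_{iq}\overline{T^k_{\ell q}}$ collapses the repeated $i$ into $\eta_q$, giving $\sum_q \eta_q\overline{T^k_{\ell q}}=\overline{\phi^k_{\ell}}$, while the piece $-\sum_{i,q}T^i_{kq}\overline{T^i_{\ell q}}$ is handled by applying the torsion antisymmetry $T^p_{ab}=-T^p_{ba}$ in both factors and relabeling $i\leftrightarrow q$, which turns it into $-A_{k\overline{\ell}}$; so the second term yields $(r-1)(5r^2-1)(\overline{\phi^k_{\ell}}-A_{k\overline{\ell}})$. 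In the third term, the piece $\sum_{i,q}T^{\ell}_{iq}\overline{T^k_{iq}}$ is already $B_{k\overline{\ell}}$, and $-\sum_{i,q}T^{\ell}_{kq}\overline{T^i_{iq}}=-\sum_q T^{\ell}_{kq}\overline{\eta_q}=-\phi_k^{\ell}$, giving $-(r-1)^3(B_{k\overline{\ell}}-\phi_k^{\ell})$. Adding the three contributions reproduces the displayed identity.

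I do not expect a serious obstacle here; the computation is essentially bookkeeping once Lemma \ref{lemma3} is available. The one genuinely non-automatic step is the use of the torsion antisymmetry to merge the $A$-type expression coming from the second term with that of the first, so that the coefficient of $A_{k\overline{\ell}}$ comes out correctly. Beyond that, the only care required is tracking the conjugations and the ordering of the lower indices in the $\phi$-type contractions, so that $\overline{\phi^k_{\ell}}$ and $\phi_k^{\ell}$ appear with exactly the index positions and signs shown in the statement; and noting that the faithful reproduction of the left-hand factor $4r(2r-1)\,\eta_{k,\overline{\ell}}$ rests solely on $D^r$ being metric, which is what licenses the commutation of contraction and covariant derivative used at the outset.
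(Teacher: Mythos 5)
Your proof is correct and is exactly the paper's argument: the paper obtains Lemma \ref{lemma4} precisely by setting $i=j$ in the identity \eqref{eq:rTderibar} of Lemma \ref{lemma3} and summing over $i$, with the same bookkeeping of $A$, $B$, $\phi$, $\phi^{\ast}$ that you carry out. The only cosmetic remark is that commuting the covariant derivative with the trace $\eta_k=\sum_i T^i_{ik}$ is automatic for any linear connection acting on the induced tensor bundles (it is a natural contraction of an upper against a lower index), so invoking metric compatibility there, while harmless, is not actually needed.
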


Here we adopted the notation in \cite{ZZ} that
$$ A_{k\overline{\ell}} = \sum_{i,j} T^i_{jk} \overline{ T^i_{j\ell }}\, , \ \ \ B_{k\overline{\ell}} = \sum_{i,j} T^{\ell}_{ij} \overline{ T^{k}_{ij}}\, , \ \ \  \phi_k^{\ell} = \sum_i T^{\ell}_{ki} \overline{\eta_i}\,. $$
Note that $\mbox{tr}(A) = \mbox{tr}(B) =|T|^2=\sum_{i,j,k} |T^i_{jk}|^2$, and $\mbox{tr}(\phi )=|\eta|^2 =\sum_k |\eta_k|^2$.

\begin{lemma} \label{lemma5}
Let $(M^n,g)$ be a Hermitian manifold with K\"ahler-like Gauduchon connection $D^r$. Then it holds:
\begin{equation*}
 2(2r-1) \chi = (r-1)(3r-1) |\eta|^2 - (r-1)^2 |T|^2 ,
\end{equation*}
where $\chi = \sum_k \eta_{k,\overline{k}}$ and the indices after comma mean covariant derivatives with respect to $D^r$.
\end{lemma}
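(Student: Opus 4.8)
The plan is to obtain Lemma \ref{lemma5} as a single contraction of Lemma \ref{lemma4}, in exact parallel with the way Lemma \ref{lemma4} was produced from Lemma \ref{lemma3}. Concretely, I would set $\ell=k$ in the identity of Lemma \ref{lemma4} and sum over $k$. The left-hand side collapses immediately to $4r(2r-1)\sum_k\eta_{k,\overline{k}}=4r(2r-1)\chi$ by the very definition of $\chi$, so the whole problem reduces to recognizing each contracted term on the right as one of the global quadratic norms already in play.

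The next step is this identification. From $A_{k\overline{\ell}}=\sum_{i,j}T^i_{jk}\overline{T^i_{j\ell}}$ and $B_{k\overline{\ell}}=\sum_{i,j}T^{\ell}_{ij}\overline{T^k_{ij}}$ one reads off $\sum_k A_{k\overline{k}}=\sum_k B_{k\overline{k}}=|T|^2$, and from $\phi_k^{\ell}=\sum_i T^{\ell}_{ki}\overline{\eta_i}$ together with $\eta_k=\sum_i T^i_{ik}$ one gets $\sum_k\phi_k^k=\sum_i\eta_i\overline{\eta_i}=|\eta|^2$, hence also $\sum_k\overline{\phi_k^k}=|\eta|^2$ since $|\eta|^2$ is real. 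Feeding these four traces into the contracted identity gives
$$4r(2r-1)\chi=4r^2(r-1)|T|^2+(r-1)(5r^2-1)(|\eta|^2-|T|^2)-(r-1)^3(|T|^2-|\eta|^2).$$
It then remains only to collect coefficients: a factor $(r-1)$ pulls out of every term, and grouping the $|T|^2$ and $|\eta|^2$ contributions should leave the coefficient of $|T|^2$ as $-2r(r-1)$ and that of $|\eta|^2$ as $2r(3r-1)$, so that
$$4r(2r-1)\chi=2r(r-1)\big[(3r-1)|\eta|^2-(r-1)|T|^2\big].$$
Cancelling the common factor $2r$ then yields exactly the stated formula $2(2r-1)\chi=(r-1)(3r-1)|\eta|^2-(r-1)^2|T|^2$.

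Since this is a straight contraction, I do not expect a genuine obstacle, only two points to handle with care. The first is the index bookkeeping in the trace step: matching the positions of $T^k_{ij}$ in $A$, $B$ and $\phi$, and invoking the antisymmetry $T^k_{ij}=-T^k_{ji}$ where the summed indices sit in the wrong slots — this is the one place a sign or index slip could occur. The second, and the step I would treat most explicitly, is the final cancellation of $2r$: the displayed polynomial relation holds for every $r$, but dividing by $2r$ is legitimate only for $r\neq0$. At $r=0$ the contraction degenerates to $0=0$ and carries no information about $\chi$, so the clean derivation delivers Lemma \ref{lemma5} for $r\neq0$, with the $r=0$ instance being vacuous for our purposes (a Lichnerowicz K\"ahler-like metric is K\"ahler, whence $T$, $\eta$ and $\chi$ all vanish and the formula holds trivially).
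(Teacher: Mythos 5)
Your contraction of Lemma \ref{lemma4} and the resulting algebra are correct, and for $r\neq 0$ this is precisely the paper's own first step (the paper likewise notes that contracting Lemma \ref{lemma4} only yields the identity of Lemma \ref{lemma5} multiplied by $r$). The genuine gap is your treatment of $r=0$. First, the $r=0$ instance is not vacuous: it reads $-2\chi=|\eta|^2-|T|^2$, and the paper uses exactly this case in an essential way --- it feeds into formula (\ref{eq:Tetasquare}), which together with Proposition \ref{GKLr=0} and integration proves the $r=0$ case of Theorem \ref{thm1} (the stated main novelty of that theorem); it is also invoked in the proof of Theorem \ref{thm5} (``When $r=0$, Lemma \ref{lemma5} and (\ref{eq:RKLdbarT}) imply $|\eta|^2=|T|^2$'') and enters Lemma \ref{lemma6} whenever one of the two Gauduchon parameters equals $0$. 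Second, your justification of it is circular: the assertion ``a Lichnerowicz K\"ahler-like metric is K\"ahler'' is exactly the $r=0$ case of Theorem \ref{thm1}, whose proof in this paper depends on Lemma \ref{lemma5} at $r=0$, so it cannot be used to establish that case. It also smuggles in compactness, which Lemma \ref{lemma5} does not assume and which is unavailable where the lemma is applied in Theorems \ref{thm2} and \ref{thm5}, both stated for arbitrary Hermitian manifolds.

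The paper closes the $r=0$ case by stepping back behind Lemma \ref{lemma4}: the prefactor $4r(2r-1)$ in Lemma \ref{lemma4} is what destroys the information at $r=0$, but the identities (\ref{eq:xy1}) and (\ref{eq:xy2}) in the proof of Lemma \ref{lemma3} hold for every $r$. Setting $i=j$, $k=\ell$ and summing there turns $x$ into $2\chi$, $y$ into $2\overline{\chi}$, and the common right-hand side $2Q$ into $2(r-1)(3r-1)|\eta|^2-2(r-1)^2|T|^2$; for $r\neq\frac{1}{2}$ the resulting $2\times 2$ linear system gives $x=y=\frac{Q}{2r-1}$, which is the stated identity with no spurious factor of $r$ to cancel, while $r=\frac{1}{2}$ is the K\"ahler case from Lemma \ref{lemma3}, where both sides vanish. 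This is the repair your argument needs at $r=0$.
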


\begin{proof}
Let $k=\ell$ and sum up in Lemma \ref{lemma4}. Then the identity in Lemma \ref{lemma5} with both sides multiplied by $r$ will follow. This establishes the identity in Lemma \ref{lemma5} when $r\neq 0$.

To cover the $r=0$ case, we note that if we let $i=j$ and $k=\ell$ and sum them up, then $x$ becomes $2\chi$, $y$ becomes $2\overline{\chi}$, and the quantity $2Q$, which is the common right hand side of (\ref{eq:xy1}) and (\ref{eq:xy2}), becomes
$$ 4r(r-1)|T|^2 + (r-1)(3r-1) (2|\eta|^2-2|T|^2) = 2(r-1)(3r-1)|\eta|^2 - 2(r-1)^2 |T|^2. $$
When $r\neq \frac{1}{2}$, the identities (\ref{eq:xy1}) and (\ref{eq:xy2}) lead to $x=y=\frac{Q}{2r-1}$ which proves Lemma \ref{lemma5}. When $r=\frac{1}{2}$, $g$ is K\"ahler so the equality holds as both sides are zero. This completes the proof of the lemma.
\end{proof}

The following proposition will be applied in the proofs of Theorem \ref{thm1} and Theorem \ref{thm5}.

\begin{proposition}\label{GKLr=0}
Let $(M^n,g)$ be a Hermitian manifold, where Lichnerowicz connection $D^0$ is K\"ahler-like. Then $\eta=0$ holds everywhere on $M^n$, namely, $g$ is balanced.
\end{proposition}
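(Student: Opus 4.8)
The plan is to extract from the hypothesis a single scalar divergence identity, and then to evaluate that divergence a second time in a purely algebraic way, so that the two evaluations together annihilate $|\eta|^2$.

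First I specialize Lemma~\ref{lemma5} to $r=0$. Since at $r=0$ one has $2(2r-1)=-2$, $(r-1)(3r-1)=1$ and $(r-1)^2=1$, the identity reads $2\chi=|T|^2-|\eta|^2$, where $\chi=\sum_k\eta_{k,\overline k}$ and all covariant derivatives are taken with respect to $D^0$. In particular the K\"ahler-like hypothesis already forces $\chi$ to be real. It is also worth recording the algebraic content of Lemma~\ref{lemma4} at $r=0$: there the coefficient $4r(2r-1)$ of $\eta_{k,\overline\ell}$ vanishes and the two surviving coefficients are $1$ and $-1$, so the lemma degenerates to the tensorial identity $\overline{\phi^k_\ell}-\phi_k^\ell=A_{k\overline\ell}-B_{k\overline\ell}$. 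As the left-hand side is anti-Hermitian in $(k,\ell)$ while the right-hand side is Hermitian, both must vanish, giving $A=B$ and that $\phi$ is Hermitian. Thus the entire difficulty is concentrated in finding a second, independent expression for $\chi$.

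For that second expression I pass to the Chern connection. Fix $p$ and, by Lemma~\ref{lemma1}, a unitary frame with $\theta^0|_p=0$, so that $\theta=-\gamma$ at $p$. Using \eqref{eq:gm+tht2}, the $D^0$-divergence $\chi$ differs from its Chern counterpart by a contorsion term, which (via $\sum_kT^k_{mk}=-\eta_m$) contributes only a quadratic $|\eta|^2$-type correction. The antiholomorphic Chern divergence of $\eta$ is then read off from the $(2,1)$-part of the first Bianchi identity \eqref{eq:Bianchi1}, which expresses the $\overline\partial$-derivatives of the Chern torsion through the Chern curvature $\Theta$. Finally I substitute $\Theta=\Theta^0-(d\gamma+\gamma\wedge\gamma)$, the $r=0$ case of the relation $\Theta^r-\Theta=(1-r)d\gamma+(1-r)^2\gamma\gamma$ recorded in \eqref{eq:taugammavarphi}, discard the surviving curvature contractions by the K\"ahler-like symmetry of $R^{D^0}$, and use Lemma~\ref{lemma2} to rewrite the remaining holomorphic covariant derivatives of torsion as torsion quadratics. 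This turns $\chi$ into an explicit algebraic expression in $T$ and $\eta$; inserting it into $2\chi=|T|^2-|\eta|^2$ collapses to $c\,|\eta|^2=0$ with $c\neq0$, so $\eta\equiv0$ and $g$ is balanced.

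The main obstacle is the step in which the Chern curvature is traded for the $D^0$-curvature. At $r=0$ the K\"ahler-like condition degenerates — this is exactly why the coefficient $4r(2r-1)$ in Lemma~\ref{lemma3} vanishes — so the antiholomorphic derivatives $T^j_{ik,\overline\ell}$ are not pinned down algebraically, and a priori they appear both in the Bianchi identity and inside $d\gamma$. The crux is to verify that, in the particular contraction defining $\chi$, these uncontrolled $\overline\partial$-derivatives cancel between the Bianchi term and the contorsion term, leaving only $\Theta^0$ (removed by K\"ahler-likeness) together with algebraic torsion. It is precisely this cancellation that lets the argument stay pointwise, so that no compactness or integration is required.
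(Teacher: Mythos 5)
Your preliminary reductions are correct and agree with the paper's first steps: at $r=0$ Lemma~\ref{lemma5} gives $2\chi=|T|^2-|\eta|^2$, and the degeneration of Lemma~\ref{lemma4} yields $A=B$ and $\phi=\phi^{\ast}$, which is exactly the paper's identity \eqref{eq:ABphi}. The gap is the promised ``second evaluation'' of $\chi$. The route you describe --- the $(2,1)$-part of the Chern Bianchi identity \eqref{eq:Bianchi1}, followed by the substitution $\Theta=\Theta^0-(d\gamma+\gamma\gamma)$ and discarding terms by K\"ahler-likeness of $\Theta^0$ --- is not independent of the first evaluation: it is precisely how the paper derives the equation $P^{j\ell}_{ik}=0$ in the proof of Lemma~\ref{lemma3}, whose contraction is Lemma~\ref{lemma5} itself. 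Running the same computation a second time can only return $2\chi=|T|^2-|\eta|^2$ (or a tautology), so the constant $c$ in your hoped-for identity $c\,|\eta|^2=0$ is $0$, not a nonzero number. The cancellation you defer as ``the crux'' is exactly the degeneration $4r(2r-1)\to 0$: at $r=0$ the uncontrolled derivatives $T^j_{ik,\overline{\ell}}$ do drop out of the K\"ahler-like equations, but what survives is the \emph{uncontracted} quadratic identity \eqref{eq:RrTbarT:r=0}, not a second formula for the scalar $\chi$.

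Moreover, no argument carried out purely at the level of traces can close this gap, because the trace consequences of \eqref{eq:RrTbarT:r=0} --- namely $A=B$, $\phi=\phi^{\ast}$, with Lemma~\ref{lemma5} serving only to determine $\chi$ --- are consistent with $\eta\neq 0$. Concretely, for $n=2$ take $T^1_{12}=a=-T^2_{12}$ with $a\neq 0$ and all other components zero: then $\eta_1=\eta_2=a$, so $|\eta|^2=2|a|^2>0$, while one checks directly that $A=B$ and $\phi$ is Hermitian. (This $T$ of course violates the full identity \eqref{eq:RrTbarT:r=0}; e.g.\ for $(i,k,j,\ell)=(1,2,1,1)$ its left side equals $-2|a|^2\neq 0$.) This is why the paper, after establishing \eqref{eq:ABphi}, must exploit the uncontracted identity pointwise: it chooses a frame with $\eta=|\eta|\varphi_n$, diagonalizes $\phi$, extracts the orthogonality and eigenvector relations \eqref{orthogonal} and \eqref{eigen} for the skew-symmetric matrix $\{T^n_{j\ell}\}$, and then runs a rank, parity, and equivalence-class argument to contradict $\sum_{\ell}T^{\ell}_{n\ell}=-|\eta|<0$. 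That combinatorial core is the actual content of the proposition and is absent from your plan; the scalar identity you aim for cannot substitute for it.
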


\begin{proof}
Lemma \ref{lemma3} for the case $r=0$ shows that
\begin{equation}\label{eq:RrTbarT:r=0}
\sum_q \{ T_{iq}^j \overline{ T_{\ell q}^k }  -   T_{kq}^j \overline{ T_{\ell q}^i }  \}
+  \sum_q \{ T_{iq}^{\ell} \overline{ T_{j q}^k }  - T_{kq}^{\ell} \overline{ T_{j q}^i }  \}=0
\end{equation}
for any $i$, $j$, $k$, $\ell$. Let $i=j$ in the equation above and sum $i$ up, which yields
\[\overline{\phi^{k}_{\ell}} - A_{k\bar{\ell}} + B_{k\bar{\ell}}-\phi^{\ell}_k=0\]
for any $k$, $\ell$. Since $A$ and $B$ are Hermitian symmetric matrices, it yields that
\begin{equation}\label{eq:ABphi}
A=B, \quad \phi = \phi^*.
\end{equation}

Assume that $|\eta|>0$ for some point $p$ on the manifold $M^n$. Then it is clear that the same also hold in a neighborhood of $p$, which enables us
to choose the unitary frame $e$ after some appropriate unitary transformation, such that $\frac{\eta}{|\eta|}=\varphi_n$ in such a neighborhood, yielding that $\eta_1 = \cdots =\eta_{n-1}=0$ and $\eta_n=|\eta| > 0$. Then $\phi_k^{\ell}=T^{\ell}_{kn}\overline{\eta_n}=T^{\ell}_{kn} |\eta| $, which implies that
\[\phi^{\ell}_n = \overline{\phi^n_{\ell}}=0\]
for any $\ell$. Since $\phi$ is Hermitian symmetric, after some another unitary transformation of $\varphi_1,\cdots,\varphi_{n-1}$ with $\varphi_n$ left unchanged, it follows that $\phi$ can be diagonal, yielding $T^\ell_{kn}=0$ for $k \neq \ell$ and $T^k_{kn}$ real for $k=\ell$.

Let $i=n$ in the identity \eqref{eq:RrTbarT:r=0}, which yields that
\begin{equation} \label{eq:Rrn1:r=0}
(T^{\ell}_{n \ell} - T^j_{n j})\overline{T^k_{j \ell}} = \sum_{q} T^j_{kq} \overline{T^n_{\ell q}} + T^{\ell}_{k q} \overline{T^n_{jq}}
\end{equation}
for any $j$, $k$, $\ell$. Similarly, let $j=n$ in the identity \eqref{eq:RrTbarT:r=0} and take the conjugation on both sides, which yields that
\begin{equation}\label{eq:Rrn2:r=0}
(T^k_{nk} + T^i_{n i})\overline{T^{\ell}_{ik}}=\sum_{q} T^i_{\ell q} \overline{T^n_{kq}} - T^k_{\ell q} \overline{T_{i q}^{n}},
\end{equation}
for any $i$, $k$, $\ell$. Hence, the two equalities \eqref{eq:Rrn1:r=0} and \eqref{eq:Rrn2:r=0} implies
\begin{equation}\label{eq:Rrkey:r=0}
T^{\ell}_{n \ell}\overline{T^k_{j \ell}} = \sum_{q} T^j_{k q} \overline{T^n_{\ell q}}
\end{equation}
for any $j$, $k$, $\ell$. Let $j=n$ in the identity \eqref{eq:Rrkey:r=0} and it follows that
\begin{equation}\label{orthogonal}
\sum_q T^n_{k q} \overline{T^n_{\ell q}} =
\begin{cases} (T^{\ell}_{n \ell})^2, &\ \text{if}\quad k=\ell \\ 0.&\ \text{if}\quad k \neq \ell\end{cases}
\end{equation}
Let $k=n$ in the identity \eqref{eq:Rrkey:r=0}, which yields that
\begin{equation}\label{eigen}
(T^\ell_{n \ell} + T^j_{n j})\overline{T^n_{j \ell}} =0
\end{equation}
for any $j,\ell$. These enables us to regard $\{T^n_{j \ell}\}$ as a $(n-1) \times (n-1)$ skew symmetric matrix, since $T^n_{n\ell}=T^n_{j n}=0$.
Without loss of generality, we can be assume that the first $m$ rows of the matrix $\{T^n_{j \ell}\}$ are non-zero ones and the remaining rows are zeros. It is clear that $\mathrm{rank}\{T^n_{j \ell}\}=m$, since the first $m$ rows are mutually orthogonal by \eqref{orthogonal}. It yields that the possibly non-zero elements of the matrix $\{T^n_{j \ell}\}$ lie in the intersection of the first $m$ rows and
the first $m$ columns. Then it follows that $m$ is a positive even integer, since \eqref{orthogonal} and $\sum_\ell T^\ell_{n \ell}=-|\eta|<0$ force $m \neq 0$, which yields $\det\{T^n_{j \ell}\} \neq 0$ for $1 \leq j, \ell \leq m$, while $\det\{T^n_{j \ell}\} = (-1)^m \det\{T^n_{j \ell}\} $ holds by the skew symmetry of $\{T^n_{j \ell}\}$. It is also clear that $T^\ell_{n \ell} \neq 0$ for $1 \leq \ell \leq  m$ by \eqref{orthogonal}. Then the analysis is narrowed to the matrix $\{T^n_{j \ell}\}$ with possibly smaller size, where $1 \leq j, \ell \leq m$.

Define an equivalent relation $\sim$ on the set $S=\{1,2,\cdots,m\}$ as follows:
\begin{enumerate}
\item For $1 \leq i \neq j \leq m$, the relation $i\sim j$ means that there exist finite mutually distinct indices $i_1,\cdots,i_k$ such that $i=i_1$, $j=i_k$ and $T^n_{i_1\,i_2}, T^n_{i_2\,i_3}, \cdots, T^n_{i_{k-1}\,i_{k}}$ are all non-zeros.
\item For any $i \in S$, the relation $i \sim i$ always holds.
\end{enumerate}
It is easy to check the '$\sim$' is indeed an equivalent relation on $S$. Then the equivalent class with a representative $i$ is denoted by $[i]$, namely $\{j\in S \big| j \sim i\}$. It follows clearly that $T^k_{nk}$ and $T^\ell_{n \ell}$ are real functions different by a sign when $T^n_{k \ell} \neq 0$ by \eqref{eigen} and thus the same holds when $k,\ell$ belong to one equivalent class $[i]$. Furthermore, $T^n_{k \ell}=0$ when $k\in [i]$, $\ell \in [j]$ and $[i]\neq [j]$, as $T^n_{k \ell}\neq0$ would imply $k \sim \ell$ and thus $[i]=[j]$, which is a contradiction.

Fix one equivalent class $[i]$. Then it follows from \eqref{orthogonal} and the discussion above that, for each $\ell \in [i]$,
\[ (T^\ell_{n \ell})^2 = \sum_{j=1}^m |T^n_{\ell j}|^2 = \sum_{j\in [i]}|T^n_{\ell j}|^2,\]
since $T^n_{\ell j}$ vanishes when $j$ goes out of $[i]$. Note that the number $(T^\ell_{n \ell})^2$ is a positive invariant when $\ell$ goes inside $[i]$, as it is shown above. From \eqref{eigen}, it yields that, for any $j$, $\ell$,
\[(T^\ell_{n \ell} + T^j_{nj})|T^n_{j \ell}|^2 = 0.\]
Sum up $j, \ell$ in the equivalent class $[i]$, which follows
\[\begin{aligned}
0 &= \sum_{j \in [i]}\sum_{\ell \in [i]} (T^\ell_{n \ell} + T^j_{nj})|T^n_{j \ell}|^2\\
  &= \sum_{j \in [i]}\sum_{\ell \in [i]} T^\ell_{n \ell}|T^n_{j \ell}|^2 + \sum_{j \in [i]}T^j_{nj}(T^j_{n j})^2\\
  &= \sum_{\ell \in [i]} T^\ell_{n \ell}(T^\ell_{n \ell})^2 + (\sum_{j \in [i]}T^j_{nj})(T^j_{n j})^2\\
  &= (\sum_{\ell \in [i]} T^\ell_{n \ell})(T^\ell_{n \ell})^2 + (\sum_{j \in [i]}T^j_{nj})(T^j_{n j})^2\\
  &= 2 (\sum_{\ell \in [i]} T^\ell_{n \ell})(T^\ell_{n \ell})^2.
\end{aligned}\]
It yields that $\sum_{\ell \in [i]} T^\ell_{n \ell}=0$. As the equivalent class $[i]$ can be arbitrary, it follows that
\[-|\eta|=\sum_{\ell =1}^m T^{\ell}_{n \ell} =0,\]
where a contradiction appears finally. Therefore, this completes the proof.
\end{proof}

Now we are ready to prove Theorem \ref{thm1}.

\begin{proof}[\textbf{Proof of Theorem \ref{thm1}}]
Note that the identity in Lemma \ref{lemma5} is exactly \cite[the formula (23)]{YZ1}. Hence, under the assumption that $D^r$ is K\"ahler-like, \cite[Lemma 3.3]{YZ1} still holds and it leads to \cite[the formula (25)]{YZ1}, which now takes the form
\begin{equation}
n(2r-1)\sqrt{-1} \partial \overline{\partial} \omega^{n-1} = \{ (r-1)^2 |T|^2 + (r^2+6r-3)|\eta|^2 \} \omega^n. \label{eq:Tetasquare}
\end{equation}
The compactness of $M^n$ enables us to integrate the above identity and the remaining part of argument in \cite{YZ1} gives a proof of the first case of Theorem \ref{thm1}, which also yields that $g$ is K\"ahler, for $n \geq 3$, when $r\not\in ( -3-2\sqrt{3}, \,  -3+2\sqrt{3})$ and $r\neq 1$. When $r=0$, Proposition \ref{GKLr=0} and \eqref{eq:Tetasquare} clearly imply that $g$ is K\"ahler. This completes the proof.
\end{proof}

For the sake of simplicity, let us denote by $T^j_{ik,\,\overline{\ell}}$ and $T^j_{ik | \overline{\ell}}$ the covariant derivatives with respect to Gauduchon connections $D^r$ and $D^{r'}$, respectively, where $r\neq r'$.  Since $D^{r'}-D^r = (r-r')\gamma$, it follows that
\begin{eqnarray}
T^j_{ik | \overline{\ell}} & = & \overline{e}_{\ell} T^j_{ik} - T^j_{qk} \langle D^{r'}_{\overline{e}_{\ell} } e_i, \overline{e}_q \rangle - T^j_{iq} \langle D^{r'}_{\overline{e}_{\ell} } e_k, \overline{e}_q \rangle  - T^q_{ik} \langle D^{r'}_{\overline{e}_{\ell} }  \overline{e}_j, e_q \rangle \nonumber \\
& = & T^j_{ik , \,\overline{\ell}} - (r-r') \{ T^j_{qk} \gamma_{iq}(\overline{e}_{\ell}) + T^j_{iq} \gamma_{kq}(\overline{e}_{\ell}) - T^q_{ik} \gamma_{qj}(\overline{e}_{\ell}) \} \nonumber \\
& = & T^j_{ik , \,\overline{\ell}} + (r-r') \{ T^j_{qk} \overline{T^i_{q\ell }}  + T^j_{iq} \overline{T^k_{q\ell }} - T^q_{ik} \overline{T^q_{j\ell }} \label{eq:deltaTbar} \}.
\end{eqnarray}
Let $i=j$ in the above identity and sum up, it yields
\begin{eqnarray}
&& \eta_{k|\overline{\ell}} = \eta_{k,\overline{\ell}} + (r'-r) \overline{\phi^k_{\ell} } \label{eq:deltaeta}.
\end{eqnarray}
Then let $k=\ell$ and sum up again, it follows
\begin{eqnarray}
&& \chi ' = \chi + (r'-r) |\eta|^2 \label{eq:deltachi}
\end{eqnarray}
where $\chi' = \sum_k \eta_{k|\overline{k}}$. As a consequence of Lemma \ref{lemma5} and (\ref{eq:deltachi}), it proceeds to the following
\begin{lemma} \label{lemma6}
Suppose that a Hermitian manifold $(M^n,g)$ has K\"ahler-like Gauduchon connections $D^r$ and $D^{r'}$, where $r\neq r'$. Then it holds that
$$ (2rr'-r-r') \{ |\eta|^2 + |T|^2 \} = 0. $$
\end{lemma}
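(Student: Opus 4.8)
The plan is to apply Lemma \ref{lemma5} to each of the two K\"ahler-like connections separately and then eliminate the torsion-divergence scalars using the transition formula \eqref{eq:deltachi}. Applying Lemma \ref{lemma5} to $D^r$ gives
\[ 2(2r-1)\chi = (r-1)(3r-1)|\eta|^2 - (r-1)^2|T|^2, \]
and applying the same lemma with parameter $r'$ to the connection $D^{r'}$ (which is legitimate since $D^{r'}$ is assumed K\"ahler-like) gives
\[ 2(2r'-1)\chi' = (r'-1)(3r'-1)|\eta|^2 - (r'-1)^2|T|^2, \]
where $\chi = \sum_k \eta_{k,\overline{k}}$ is computed with respect to $D^r$ and $\chi' = \sum_k \eta_{k|\overline{k}}$ with respect to $D^{r'}$. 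These two scalars are linked by \eqref{eq:deltachi}, namely $\chi' = \chi + (r'-r)|\eta|^2$.

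First I would dispose of the degenerate values. If $r=\tfrac12$ or $r'=\tfrac12$, then Lemma \ref{lemma3} forces $g$ to be K\"ahler, so $|\eta|^2=|T|^2=0$ and the asserted identity holds trivially. Hence I may assume $r\neq\tfrac12$ and $r'\neq\tfrac12$, so that $2r-1$ and $2r'-1$ are both nonzero and $\chi$, $\chi'$ can be solved for from the two displayed equations.

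The core step is to eliminate $\chi$ and $\chi'$. Substituting $\chi'=\chi+(r'-r)|\eta|^2$ into the $D^{r'}$-identity and then inserting the value of $\chi$ read off from the $D^r$-identity produces a single relation that is homogeneous of degree one in $|\eta|^2$ and $|T|^2$. After clearing the denominator $2(2r-1)(2r'-1)$, I expect a relation of the form
\[ C_\eta\,|\eta|^2 + C_T\,|T|^2 = 0, \]
with polynomial coefficients $C_\eta$ and $C_T$ in $r,r'$. The main (purely algebraic) obstacle is to verify that these two coefficients coincide: a direct expansion should yield $C_\eta = C_T = (r-r')(2rr'-r-r')$, with the factor $r-r'$ emerging from the antisymmetry of the computation and the factor $2rr'-r-r'$ being exactly the content of the lemma.

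Once this common value is confirmed, the relation factors as $(r-r')(2rr'-r-r')(|\eta|^2+|T|^2)=0$, and dividing by the nonzero $r-r'$ gives the claim. The only real care required is in the bookkeeping of the degree-two-in-$(r,r')$ polynomial expansions when checking $C_\eta=C_T$; everything else is forced by the two previously established formulas.
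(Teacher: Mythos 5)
Your proposal is correct and follows essentially the same route as the paper's own proof: apply Lemma \ref{lemma5} to both connections, dispose of the $r=\tfrac12$ or $r'=\tfrac12$ cases via Lemma \ref{lemma3}, and eliminate $\chi,\chi'$ using \eqref{eq:deltachi}. The algebraic identity you leave to "direct expansion" does check out exactly as you predict --- one finds $f(r')-f(r)=\frac{(r'-r)(6rr'-3r-3r'+2)}{2(2r-1)(2r'-1)}$ and $h(r')-h(r)=\frac{(r'-r)(2rr'-r-r')}{2(2r-1)(2r'-1)}$, from which the common factor $(r'-r)(2rr'-r-r')$ multiplying $|\eta|^2+|T|^2$ emerges after clearing denominators, just as in the paper.
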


\begin{proof}
Let us assume that $r$, $r'\neq \frac{1}{2}$, as otherwise $g$ will be K\"ahler, hence $T=\eta=0$. By Lemma \ref{lemma5}, it follows that $\chi = f(r) |\eta|^2 - h(r) |T|^2$ and $\chi ' = f(r') |\eta|^2 - h(r') |T|^2$, where
$$ f(r) = \frac{(r-1)(3r-1)}{2(2r-1)}, \ \ \ \ \ \ h(r) = \frac{(r-1)^2 }{2(2r-1)}. $$
Hence, it yields from (\ref{eq:deltachi}) that
\begin{eqnarray*}
(r'-r)|\eta|^2 & = & \chi'-\chi \ = \ (f(r')-f(r)) |\eta|^2 - (h(r')-h(r)) |T|^2 \\
& = & \frac{(r'-r)}{2(2r-1)(2r'-1)} \{ (6rr' -3r-3r'+2) |\eta|^2 - (2rr'-r-r') |T|^2 \}.
\end{eqnarray*}
Cancel the factor $r'-r\neq 0$, which yields the identity stated in the lemma.
\end{proof}

As a consequence, it is clear that when $r$ and $r'$ satisfy $2rr'-r-r'\neq 0$, $D^r$ and $D^{r'}$ cannot be simultaneously K\"ahler-like, unless $g$ is K\"ahler. Therefore the key in proving Theorem \ref{thm2} is to deal with the case when $r$ and $r'$ does satisfy the condition $2rr'-r-r'=0$.

Lemma \ref{lemma6} also suggests the following phenomenon: Gauduchon connections seem to form {\em duality pairs} in a sense, which will be described below. Consider the function from ${\mathbb R}\setminus \{ \frac{1}{2}\}$ onto itself
 $$\xi (r)=\frac{r}{2r-1}.$$
The graph of this function is the two branches of hyperbola centered at the point $(\frac{1}{2}, \frac{1}{2})$. The connection $D^{\xi (r)}$ can be considered as the {\em dual} of $D^r$, since $\xi (\xi (r))=r$, for any $r \in {\mathbb R}\setminus \{ \frac{1}{2}\}$, while the value $\frac{1}{2}$ can be regarded as the dual to $\infty $. Note that when $D^{\frac{1}{2}}$ is K\"ahler-like, the metric is necessarily K\"ahler. In this sense, the Lichnerowicz connection $D^0$ and the Chern connection $D^1$ are the only {\em self-dual} Gauduchon connections, while the Strominger (or Bismut) connection $D^{-1}$ is dual to $D^{\frac{1}{3}}$, the so-called {\em minimal} connection which has the smallest torsion norm amongst all Gauduchon connections. Finally, the two boundary values $-3\pm 2\sqrt{3}$ of the interval appeared in Theorem \ref{thm1} are also dual to each other. Therefore, the key to prove Theorem \ref{thm2} is to deal with the case when $r$ and $r'$ form a duality pair.

In the remaining part of this section, we will assume that $(M^n,g)$ is a Hermitian manifold, whose Gauduchon connections $D^r$ and $D^{r'}$ are both K\"ahler-like, where $r\neq \frac{1}{2}$, $r\neq r'$ and $r'=\xi (r) = \frac{r}{2r-1}$. This implies that both  $r$ and $r'$ do not belong to $\{ 0, 1, \frac{1}{2}\}$. Our goal is to conclude that $g$ must be K\"ahler, thus completing the proof of Theorem \ref{thm2}.

Similar to (\ref{eq:deltaTbar}), it yields that
\begin{eqnarray}
T^j_{ik | \ell} & = & e_{\ell} T^j_{ik} - T^j_{qk} \langle D^{r'}_{e_{\ell} } e_i, \overline{e}_q \rangle - T^j_{iq} \langle D^{r'}_{e_{\ell} } e_k, \overline{e}_q \rangle  - T^q_{ik} \langle D^{r'}_{e_{\ell} }  \overline{e}_j, e_q \rangle \nonumber \\
& = & T^j_{ik , \,{\ell}} - (r-r') \{ T^j_{qk} \gamma_{iq}(e_{\ell}) + T^j_{iq} \gamma_{kq}(e_{\ell}) - T^q_{ik} \gamma_{qj}(e_{\ell}) \} \nonumber \\
& = & T^j_{ik , \,\ell} - (r-r') \{ T^j_{qk} T^q_{i\ell }  + T^j_{iq} T^q_{k\ell } - T^q_{ik} T^j_{q\ell } \} \nonumber \\
& = & T^j_{ik , \,\ell}  + (r'-r) \{ T^q_{\ell i} T^j_{kq}   + T^q_{k\ell } T^j_{iq}  + T^q_{ik} T^j_{\ell q }\}. \label{eq:deltaT}
\end{eqnarray}
where the index after comma or `$|$' stands for covariant derivatives with respect to $D^r$ or $D^{r'}$ respectively as before.
Together with Lemma \ref{lemma2} and (\ref{eq:deltaT}), it follows that

\begin{lemma} \label {lemma7}
If a Hermitian manifold $(M^n,g)$ has K\"ahler-like Gauduchon connections $D^r$ and $D^{r'}$, where $r\neq 0,1$, $r'\neq 1$ and $r\neq r'$, then
$$ T^j_{ik,\, \ell } = T^j_{ik|\ell } = \sum_q T^q_{ik} T^j_{q\ell } =0 $$
for any indices $i$, $j$, $k$, $\ell$. In particular, $\eta_{i,j}=\eta_{i\,| j}=0$ and $C_{ij}:= \sum_{q,s} T^q_{si} T^s_{qj} =0$.
\end{lemma}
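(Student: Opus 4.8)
The plan is to combine two pieces of structural information that are already available. On one hand, Lemma~\ref{lemma2} (applied twice, once for $D^r$ and once for $D^{r'}$, both of which are K\"ahler-like with parameter $\neq 1$) gives the cyclic torsion identity \eqref{eq:rTTcyclic}. Since $r\neq 0$ and $r'\neq 0$ under our hypotheses, the factor $r$ (respectively $r'$) may be cancelled, so \eqref{eq:rTTcyclic} reduces to the \emph{cyclic sum} vanishing:
$$ \sum_q \big( T^q_{ij}T^\ell_{kq} + T^q_{ki}T^\ell_{jq} + T^q_{jk}T^\ell_{iq}\big) = 0 $$
for all $i,j,k,\ell$. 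On the other hand, the same lemma gives the derivative formula \eqref{eq:rTderi}, namely
$$ T^\ell_{ik,j} = -(1+r)\sum_q T^q_{ik}T^\ell_{jq}, $$
and the analogous formula with $r$ replaced by $r'$ for the $D^{r'}$-derivative $T^\ell_{ik|j}$.

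Next I would exploit the comparison formula \eqref{eq:deltaT}, which expresses the difference $T^j_{ik|\ell} - T^j_{ik,\ell}$ purely in terms of the quadratic torsion expression $T^q_{\ell i}T^j_{kq}+T^q_{k\ell}T^j_{iq}+T^q_{ik}T^j_{\ell q}$. But this quadratic expression is exactly (a relabeling of) the cyclic sum that we have just shown vanishes. Hence \eqref{eq:deltaT} collapses to $T^j_{ik|\ell} = T^j_{ik,\ell}$; the two covariant derivatives in the $(1,0)$-direction agree. Substituting the two versions of \eqref{eq:rTderi} into this equality then yields
$$ -(1+r)\sum_q T^q_{ik}T^j_{\ell q} = -(1+r')\sum_q T^q_{ik}T^j_{\ell q}, $$
and since $r\neq r'$ we may cancel, forcing $\sum_q T^q_{ik}T^j_{\ell q}=0$. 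Feeding this back into either form of \eqref{eq:rTderi} immediately gives $T^j_{ik,\ell}=0$, and then $T^j_{ik|\ell}=0$ as well. The three asserted vanishings follow at once.

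The specialization to $\eta$ and $C$ is routine bookkeeping: setting $i=j$ and summing gives $\eta_{k,\ell}=\eta_{k|\ell}=0$, while the definition $C_{ij}=\sum_{q,s}T^q_{si}T^s_{qj}$ is, after relabeling, a sum of the vanishing quantity $\sum_q T^q_{si}T^s_{q j}$ over $s$, so $C_{ij}=0$. The only point demanding genuine care is the bookkeeping of indices when identifying the quadratic term in \eqref{eq:deltaT} with the cyclic sum from \eqref{eq:rTTcyclic}, together with the skew-symmetry $T^k_{ji}=-T^k_{ij}$; I expect this index matching, rather than any deep idea, to be the main potential source of error. No compactness or integration is needed here, consistent with the global, pointwise nature of the statement.
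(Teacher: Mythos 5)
Your proposal is correct and follows essentially the same route as the paper's own proof: use $r\neq 0$ to cancel the factor in \eqref{eq:rTTcyclic} so that the quadratic term in \eqref{eq:deltaT} vanishes, giving $T^j_{ik|\ell}=T^j_{ik,\ell}$, then equate the two instances of \eqref{eq:rTderi} and cancel $r-r'$ to get $\sum_q T^q_{ik}T^j_{q\ell}=0$ and hence the vanishing of both derivatives. The only (harmless) slip is your claim that $r'\neq 0$ is among the hypotheses---it is not ($r'=0$ is allowed), but your argument never actually needs it, since the vanishing of the cyclic sum is a tensor identity that only has to be derived once, from $D^r$ with $r\neq 0$.
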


\begin{proof}
When $r\neq 0,1$, $ T^j_{ik,\, \ell } = T^j_{ik|\ell }$ follows from (\ref{eq:rTTcyclic}) in Lemma \ref{lemma2} and (\ref{eq:deltaT}). Since $r, r'\neq 1$ and $r\neq r'$, the identity (\ref{eq:rTderi}) in Lemma \ref{lemma2} for both derivatives imply \[\sum_q T^q_{ik} T^j_{q\ell } =0.\]
Hence both derivatives are also zeros.
\end{proof}

Now let us assume that $r\neq \frac{1}{2}$ and $r'=\xi(r)=\frac{r}{2r-1}\neq r$, which follows that neither $r$ nor $r'$ belongs to $\{ 0,1,\frac{1}{2}\}$. By Lemma \ref{lemma4}, it yields that
$$ \eta_{k,\,\overline{\ell}} = a(r) A + b(r) \,( \phi^{\ast} - A) +c(r)\, (  \phi - B ), $$
where $\phi = \phi_k^{\ell}$, $\phi^{\ast } = \overline{\phi^k_{\ell}}$, $A=A_{k\overline{\ell}}$, $B=B_{k\overline{\ell}}$ and
$$ a(r) = \frac{r(r-1)}{2r-1}, \ \ \ b(r) = \frac{(r-1)(5r^2-1)}{4r(2r-1)}, \ \ \ c(r) = \frac{(r-1)^3}{4r(2r-1)}.$$
The condition that $r'=\xi(r)$, or equivalently $2rr' = r+r'$, implies that
$$ a(r')-a(r) = (r'-r), \ \ \ b(r')-b(r) = \frac{3rr'+1}{4rr'}(r'-r), \ \ \ c(r')-c(r) = \frac{rr'-1}{4rr'} (r'-r). $$
After these are plugged into (\ref{eq:deltaeta}), namely, $\eta_{k|\overline{\ell}} - \eta_{k,\overline{\ell}} = (r'-r)\phi^{\ast }$, and the non-zero factor $(r'-r)$ is cancelled out, it follows that
$$ 4rr' \phi^{\ast } = 4rr' A + (3rr'+1) (\phi^{\ast }-A) + (rr'-1) (\phi -B), $$
or equivalently,
$$ (rr'-1) \{ A-B + \phi - \phi^{\ast} \} =0. $$
Since $rr'-1= \frac{r^2}{2r-1} -1 = \frac{(r-1)^2}{2r-1} \neq 0$, it yields that $A-B= \phi^{\ast }-\phi$. Note that the left hand side of the last equality is Hermitian symmetric, while the right hand is skew-Hermitian, which implies that both sides are zero, namely

\begin{lemma} \label {lemma8}
If a Hermitian manifold $(M^n,g)$ has K\"ahler-like Gauduchon connections $D^r$ and $D^{r'}$,
where $r\neq \frac{1}{2}$ and $r'=\frac{r}{2r-1}\neq r$,  then $A=B$, $\phi =\phi^{\ast }$ and
$$ \eta_{k,\overline{\ell }} = \frac{(r-1)(3r-1)}{2(2r-1)} \phi - \frac{(r-1)^2}{2(2r-1)} A. $$
\end{lemma}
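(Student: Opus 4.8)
The plan is to play the two K\"ahler-like hypotheses against each other by applying Lemma \ref{lemma4} once for $D^r$ and once for $D^{r'}$, and then reconciling the two resulting expressions for the derivative of the torsion $1$-form $\eta$ through the change-of-connection formula \eqref{eq:deltaeta}. Concretely, Lemma \ref{lemma4} lets me write $\eta_{k,\overline{\ell}} = a(r)A + b(r)(\phi^{\ast}-A) + c(r)(\phi - B)$, where $a,b,c$ are the explicit rational functions obtained by dividing the identity of Lemma \ref{lemma4} through by $4r(2r-1)$, and the analogous formula with $r$ replaced by $r'$ computes the $D^{r'}$-derivative $\eta_{k|\overline{\ell}}$. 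Since both $r$ and $r'$ avoid $\{0,1,\tfrac12\}$ (a consequence of $r\neq\tfrac12$ and $r'=\xi(r)\neq r$), all of these coefficients are well defined.

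Next I would subtract the two expressions and feed in \eqref{eq:deltaeta}, which asserts $\eta_{k|\overline{\ell}} - \eta_{k,\overline{\ell}} = (r'-r)\phi^{\ast}$. Here the duality hypothesis does the decisive work: the relation $r'=\xi(r)$ is equivalent to $2rr' = r+r'$, and under this single constraint the coefficient differences collapse to $a(r')-a(r) = r'-r$, $b(r')-b(r) = \tfrac{3rr'+1}{4rr'}(r'-r)$ and $c(r')-c(r) = \tfrac{rr'-1}{4rr'}(r'-r)$. After cancelling the nonzero factor $r'-r$, everything condenses into the single tensor identity $(rr'-1)\{A-B+\phi-\phi^{\ast}\}=0$.

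The last step is structural rather than computational. Because $r\neq1$ forces $rr'-1 = (r-1)^2/(2r-1)\neq0$, the bracket must vanish, that is, $A-B = \phi^{\ast}-\phi$. Now $A$ and $B$ are each Hermitian symmetric, so the left-hand side is Hermitian symmetric, whereas $\phi^{\ast}-\phi$ is skew-Hermitian; a matrix that is simultaneously Hermitian and skew-Hermitian is zero, so $A=B$ and $\phi=\phi^{\ast}$ hold separately. Substituting these back into the Lemma \ref{lemma4} expression merges the $(\phi^{\ast}-A)$ and $(\phi-B)$ terms into $(b(r)+c(r))(\phi-A)$, and the arithmetic $b(r)+c(r) = \tfrac{(r-1)(3r-1)}{2(2r-1)}$ together with $a(r)=\tfrac{r(r-1)}{2r-1}$ yields the closed form stated in the lemma.

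The only genuinely delicate point is the second step: I must verify that the duality constraint $2rr'=r+r'$ is exactly what forces the three coefficient differences to factor through $r'-r$ and leaves a lone scalar factor $rr'-1$ in front of a combination independent of $r$. Once that algebra is confirmed, the remaining arguments --- nonvanishing of the denominators and the Hermitian/skew-Hermitian splitting --- are immediate, and I expect no essential obstacle beyond careful bookkeeping of these rational coefficients.
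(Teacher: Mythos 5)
Your proposal is correct and follows essentially the same route as the paper: apply Lemma \ref{lemma4} for both $r$ and $r'$, compare via \eqref{eq:deltaeta}, use the duality relation $2rr'=r+r'$ to reduce the coefficient differences (which are exactly the ones the paper records), cancel $r'-r$ to obtain $(rr'-1)\{A-B+\phi-\phi^{\ast}\}=0$, and conclude via $rr'-1=\frac{(r-1)^2}{2r-1}\neq 0$ together with the Hermitian/skew-Hermitian splitting. The final substitution, with $b(r)+c(r)=\frac{(r-1)(3r-1)}{2(2r-1)}$ and $a(r)-b(r)-c(r)=-\frac{(r-1)^2}{2(2r-1)}$, reproduces the stated closed form, so there is no gap.
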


Now we are ready to prove  Theorem \ref{thm2}.

\begin{proof}[\textbf{Proof of Theorem \ref{thm2}}]
Let $(M^n,g)$ be a Hermitian manifold, with K\"ahler-like Gauduchon connections $D^r$ and $D^{r'}$ where $r\neq r'$. It can be assumed that $r,r'\neq \frac{1}{2}$ and $2rr'-r-r'= 0$, as otherwise $g$ is K\"ahler by the first part of Lemma \ref{lemma3} and $2rr'-r-r'\neq 0$ will imply, by Lemma \ref{lemma6}, that the metric has vanishing Chern torsion hence is K\"ahler.

Let us start with $r'=\frac{r}{2r-1}\neq r$ and thus $r,r' \notin \{0,1,\frac{1}{2}\}$. By Lemma \ref{lemma8}, it yields that $\phi = \phi^{\ast}$. Also, Lemma \ref{lemma7} implies that $\sum_q T^q_{ik} T^j_{q\ell }=0$ for any indices $i$, $j$, $k$, and $\ell$. Multiply by $\overline{\eta}_k\overline{\eta}_{\ell}$ and sum up $k$ and $\ell$, which yields that
$$ \sum_q \phi_i^q \phi_q^j = 0$$
for any $i$, $j$. Since $\phi =\phi^{\ast}$, that is, $\phi_q^j = \overline{\phi^q_j}$, it follows that
$$ \sum_q \phi_i^q \overline{\phi_j^q} =0 $$
for any $i$, $j$, which implies $\phi=0$. The trace of $\phi$ is $|\eta|^2$ and thus $\eta =0$. From the last identity in Lemma \ref{lemma8}, it yields that $A=0$. Therefore $\mbox{tr}(A)=|T|^2=0$, implying that $T=0$ and $g$ is K\"ahler. This completes the proof of Theorem \ref{thm2}.
\end{proof}

\vspace{0.3cm}

\section{K\"ahler-like canonical $(r,s)$-connections}\label{canconn}

Let us turn our attention to the plane of canonical metric connections generated by the Gauduchon line $D^r$ and the Riemannian connection $\nabla$ on a Hermitian manifold $(M^n,g)$:
$$D^r_s=(1-s)D^r+s\nabla, \ \ \ (r,s)\in \Omega\subseteq {\mathbb R}^2, $$
where $ \Omega = \{ (r,s) | s\neq 1\} \cup \{ (0,1)\}$.  The points $(0,1)$, $(0,-1)$, $(1,0)$, $(-1,0)$ in $\Omega$ corresponds to the Riemannian, anti-Riemannian,  Chern, and Strominger connection, respectively. Each can be K\"ahler-like yet non-K\"ahler. Also, the two points $(-1,2)$ and $(\frac{1}{3},-2)$  turn out to be special as well, and the corresponding connections are denoted by $\nabla^+$, $\nabla^-$, respectively. Denote by
$$\Omega' = \Omega \setminus (\{s=0\} \cup \{ (0,1), (0,-1), (-1,2), (\frac{1}{3},-2)  \})$$
the complement of the Gauduchon line $L_0=\{(r,s) \big| s=0\}$ and the four special points corresponding to the Riemannian connection $\nabla$, the anti-Riemannian connection $\nabla'$, and $\nabla^+$, $\nabla^-$. Our goal is to show that, for any $(r,s)\in \Omega'$,  the canonical metric connection $D^r_s$ cannot be K\"ahler-like unless the metric is K\"ahler, while $\nabla^+$ or $\nabla^-$ being K\"ahler-like is equivalent to that the Strominger connection $\nabla^s$ is K\"ahler-like.

Let us try to understand the implication of a canonical $(r,s)$-connection $D^r_s$ being K\"ahler-like. Let $(M^n ,g)$ be a Hermitian manifold with $D^r_s$ being K\"ahler-like, where  $(r,s)\in \Omega$. What this means will be investigated in terms of the torsion components of the Chern connection of $g$. Fix a point $p\in M$ and let $e$ be a local unitary frame near $p$, with the dual coframe $\varphi$. Denote by $\theta$, $\tau$ the connection matrix and torsion vector of the Chern connection $\nabla^c=D^1$ under $e$. It follows, from the discussion before Lemma \ref{lemma1} in Section \ref{gkl}, that
$$ D^r_s e = (\theta + t\gamma )e + s\overline{\theta}_2 \overline{e}, $$
where $t= 1-r+rs$. That is, the matrices of connection and curvature of $D^r_s$ under the frame $\{ e, \overline{e}\}$ are:
$$ \theta^{\!D} = \left[ \begin{array}{cc} \theta^{(t)} & s\overline{\theta}_2 \\ s \theta_2 & \overline{\theta^{(t)} } \end{array} \right] , \ \ \ \Theta^{\!D} = \left[ \begin{array}{cc} \Theta^{\!D}_1 & \overline{\Theta^{\!D}_2} \\  \Theta^{\!D}_2 & \overline{\Theta^{\!D}_1} \end{array} \right] , $$
where $D=D^r_s$ and
$$ \Theta^{\!D}_1=d\theta^{(t)} - \theta^{(t)} \theta^{(t)} - s^2 \overline{\theta}_2 \theta_2, \ \ \ \ \ \Theta^{\!D}_2 = s(d\theta_2 - \theta_2\,\theta^{(t)} -  \overline{\theta^{(t)}}\, \theta_2), $$
while $\theta^{(t)}=\theta +t\gamma $ corresponds to the Gauduchon connection $D^{1-t} $. Its curvature tensor $R^D$ is given by
\begin{eqnarray*}
R^D_{XY\overline{i}\overline{j}} & = & (\Theta^{\!D}_2)_{ij}(X,Y)  \\
R^D_{XYi\overline{j}} & = & (\Theta^{\!D}_1)_{ij}(X,Y)
\end{eqnarray*}
for any tangent vector $X$, $Y$. By definition, $D$ being K\"ahler-like means $ R^D_{\overline{i}\overline{j}XY} =  R^D_{XY\overline{i}\overline{j}} = 0$ and $  R^D_{i\overline{j}k\overline{\ell}}=R^D_{k\overline{j}i\overline{\ell}}$ for any indices and any tangent vectors $X$, $Y$. Therefore, it yields that

\begin{lemma} \label{lemma9}
On a Hermitian manifold $(M^n,g)$, the canonical metric connection $D=D^r_s$ is K\"ahler-like if and only if
\begin{equation*}
\Theta^{\!D}_2 = 0, \ \ \ \ \  (\Theta^{\!D}_1)^{2,0}=0, \ \ \ \ \  \,^t\!\varphi \, (\Theta^{\!D}_1)^{1,1} =0. \label{eq:DKL}
\end{equation*}
\end{lemma}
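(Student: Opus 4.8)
The claim is an equivalence, so I would prove both directions, but really only one direction requires work: unpacking the definition of K\"ahler-likeness for the Hermitian-but-not-necessarily-$(1,0)$-preserving connection $D=D^r_s$. The key observation is that $D^r_s$ is a metric connection ($Dg=0$) but, unlike the Gauduchon connections $D^r$, it need not satisfy $DJ=0$; this is precisely because the $s\overline{\theta}_2\overline{e}$ term mixes the $(1,0)$ and $(0,1)$ frames. So I cannot directly invoke the simplified characterization from \cite[Remark 6]{AOUV} that applies to Hermitian connections. Instead I must return to the original definition via the two symmetry conditions \eqref{bnc} and \eqref{type_1} on the full curvature tensor $R^D$.

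**Steps.** First I would record, from the block form of $\Theta^{\!D}$ already displayed, exactly which components of $R^D$ are governed by which matrix entries: the computation shown gives $R^D_{XY\overline{i}\overline{j}}=(\Theta^{\!D}_2)_{ij}(X,Y)$ and $R^D_{XYi\overline{j}}=(\Theta^{\!D}_1)_{ij}(X,Y)$, together with their conjugates. The plan is then to translate the two defining symmetries into statements about $\Theta^{\!D}_1$ and $\Theta^{\!D}_2$. The type condition \eqref{type_1}, $R^D(Jx,Jy,z,w)=R^D(x,y,z,w)=R^D(x,y,Jz,Jw)$, forces each curvature component to be of pure type in each of the two pairs of slots; combined with the first Bianchi-type symmetry \eqref{bnc}, this is what the excerpt summarizes (for Hermitian connections) as ``the only possibly nonzero components are $R^D(X,\overline Y,Z,\overline W)$, symmetric under interchange of first and third slots.'' Concretely I expect the type condition in the first pair of indices to kill the $(2,0)$- and $(0,2)$-parts, which yields $(\Theta^{\!D}_1)^{2,0}=0$ (and its conjugate) and $\Theta^{\!D}_2=0$; and the first-Bianchi/interchange symmetry on the surviving $(1,1)$-part $(\Theta^{\!D}_1)^{1,1}$ is exactly the condition $\,^t\!\varphi\,(\Theta^{\!D}_1)^{1,1}=0$, since $R^D_{i\overline j k\overline\ell}=R^D_{k\overline j i\overline\ell}$ is the statement that contracting the index $i$ against $\varphi_i$ (antisymmetrizing first and third positions) vanishes. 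For the converse I would simply check that the three listed vanishing conditions reconstitute both \eqref{bnc} and \eqml{type_1}, reading the implications backwards.

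**Main obstacle.** The subtle point — and the reason this deserves care rather than a one-line citation — is that $D^r_s$ is not $J$-invariant when $s\neq 0$, so a priori $R^D(\ast,\ast,Z,W)$ need \emph{not} vanish automatically, and one cannot assume the curvature is concentrated in the convenient index pattern from the outset. I must verify that the type condition \eqref{type_1}, imposed as a hypothesis (not derived from $DJ=0$), genuinely forces the mixed-type pieces carried by $\Theta^{\!D}_2$ and by $(\Theta^{\!D}_1)^{2,0}$ to vanish, and that nothing is lost by the block decomposition. I would handle this by expanding $R^D$ on type-$(1,0)$ and type-$(0,1)$ vectors and matching against the entries of $\Theta^{\!D}_1,\Theta^{\!D}_2$ slot by slot, being attentive to the placement of the $s$-factors and the conjugations in the off-diagonal blocks. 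Once the bookkeeping of which symmetry constrains which block is set up cleanly, the equivalence follows formally; the real work is entirely in that translation, and I expect it to be routine but error-prone precisely at the points where $J$-invariance fails.
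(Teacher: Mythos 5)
Your proposal is correct and follows essentially the same route as the paper: the paper's proof consists precisely of reading off, from the displayed identifications $R^D_{XY\overline{i}\overline{j}}=(\Theta^{\!D}_2)_{ij}(X,Y)$ and $R^D_{XYi\overline{j}}=(\Theta^{\!D}_1)_{ij}(X,Y)$, that the defining symmetries \eqref{bnc} and \eqref{type_1} amount to $R^D_{\overline{i}\overline{j}XY}=R^D_{XY\overline{i}\overline{j}}=0$ together with $R^D_{i\overline{j}k\overline{\ell}}=R^D_{k\overline{j}i\overline{\ell}}$, which are exactly the three matrix conditions. Your extra care about the failure of $DJ=0$ for $s\neq 0$ (so that $\Theta^{\!D}_2=0$ must be imposed rather than being automatic as in \cite[Remark 6]{AOUV}) is precisely the point the paper's terse ``by definition'' argument implicitly relies on.
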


As an immediate corollary, we observe the following duality phenomenon for the K\"ahler-likeness of canonical metric connections, which
should not be confused with the duality pairs on the Gauduchon line discussed in Section \ref{gkl}. It occurs in the subset $\Omega_0 = \{(r,s) \big| s\neq 0, 1, -1\} \cap \Omega$. Define a map $\Psi : \Omega_0 \rightarrow \Omega_0$ by
$$ \Psi (r,s) = (\frac{1-s}{1+s}r, -s). $$
Clearly, $\Psi (r,s) \neq (r,s)$ and $\Psi (\Psi (r,s)) = (r,s)$. For $(r',s') =  \Psi (r,s) $, it is easy to see that
$$ t'=1-r'+r's' = 1 - (1+s)r' = 1-(1-s)r=t. $$
Hence, for $D=D^r_s$ and $D'=D^{r'}_{s'}$, it yields that $\Theta^{D'}_1=\Theta^{D}_1 $ and $\Theta^{D'}_2=- \Theta^{D'}_2 $. Then, by Lemma \ref{lemma9}, it is clear that $D'$ will be K\"ahler-like if and only if $D$ is K\"ahler-like.

For instance, $\Psi (-1,2)= (\frac{1}{3}, -2)$, hence $\nabla^+$ is K\"ahler-like if and only if $\nabla^-$ is K\"ahler-like. It is obvious that the K\"ahler-likeness of $\nabla$ is equivalent to that of $\nabla'$, since $(0,-1)$ can also be regarded as the dual to $(0,1)$.

Note that this pairing phenomenon was caused by the reflection of the $s$-factor in the parameter plane. For any point on the punctured line $L_{-1}^{\ast }=\{ (r, -1) \mid r\neq 0\}$, there is no obvious candidate of another point in $\Omega$ such that the corresponding pair of connections will be K\"ahler-like simultaneously. The same thing goes for points on the Gauduchon line $L_0=\{ (r,s) \big| s=0\}$, where $\Psi $ is defined but has only fixed points.

For any given point $p$, let us choose our unitary frame $e$ such that $\theta^{(t)}|_p=0$. Then at the point $p$, it follows
from $\theta = -t\gamma$ and \eqref{eq:taugamma} that
$d\,^t\!\varphi = (1-t)\,^t\!\varphi\gamma' + t \,^t\!\varphi \overline{\,^t\!\gamma'}$, which implies
\[ \partial \varphi_q = (1-t) \sum_{i,k}T^q_{ik}\varphi_i\varphi_k,
\quad \overline{\partial} \varphi_q = t\sum_{i,j} \overline{T^i_{qj}} \, \varphi_i \overline{\varphi}_j, \]
and, by the first Bianchi identity $d\tau = \,^t\!\Theta \varphi - \,^t\!\theta \tau$,
\[\begin{split}
\,^t\!\varphi \,\Theta &= d\,^t\!\tau + \,^t\!\tau \theta = d(\,^t\!\varphi \gamma') + \,^t\!\varphi \gamma' (-t\gamma) \\
&= \,^t\!\varphi \{ (1-2t)\gamma' \gamma' - \partial \gamma' \} + \,^t\!\varphi \{ -\overline{\partial} \gamma' + t \gamma ' \overline{\,^t\!\gamma'} + t \overline{\,^t\!\gamma'} \gamma'\} \\
&= \,^t\!\varphi \{ -\overline{\partial} \gamma' + t \gamma ' \overline{\,^t\!\gamma'} + t \overline{\,^t\!\gamma'} \gamma'\} ,
\end{split}\]
where the last equality results from the vanishing of the $(2,0)$ part $\Theta^{2,0}$ of the Chern curvature and thus
\begin{equation}
 0= (^t\!\varphi \,\Theta )^{3,0} = \,^t\!\varphi \{ (1-2t)\gamma' \gamma' - \partial \gamma' \}. \label{eq:30part}
 \end{equation}
Since the entries of $\theta_2$ are $(1,0)$-forms by \eqref{eq:gm+tht2}, it follows that
\begin{eqnarray*}
(\Theta^{\!D}_1)^{2,0} & = & t \partial \gamma' + t^2 \gamma' \gamma', \\
(\Theta^{\!D}_1)^{1,1} & = & \Theta + t( \overline{\partial } \gamma' - \partial \overline{\,^t\!\gamma'}) - t^2 (\gamma' \overline{\,^t\!\gamma'} + \overline{\,^t\!\gamma'} \gamma' ) - s^2 \overline{\theta_2} \theta_2.
 \end{eqnarray*}
Therefore, the equations in Lemma \ref{lemma9} are equivalent to
\begin{eqnarray}
&&  s\,d\,\theta_2 = 0, \label{eq:dtheta2}\\
& &  t(\partial \gamma' + t \gamma' \gamma' ) =0, \label{eq:tgamma} \\
&& ^t\!\varphi \, \{ (t-1) \overline{\partial} \gamma' - t \,\partial \,\overline{ ^t\!\gamma'} - t(t-1) ( \gamma'\overline{ ^t\!\gamma'} + \overline{ ^t\!\gamma'} \gamma' ) - s^2\, \overline{\theta}_2 \theta_2 \} =0. \label{eq:main}
\end{eqnarray}
Note that the identity \eqref{eq:gm+tht2} also implies that
\begin{eqnarray*}
\overline{\partial} \,\overline{(\theta_2)}_{ik} & = &  -\sum_{j,\ell } \big( T^j_{ik,\overline{\ell}} + (t-1) \sum_q T^q_{ik} \overline{T^q_{j\ell }} \big) \overline{\varphi}_j \overline{\varphi}_{\ell}, \\
\partial \,\overline{(\theta_2)}_{ik} & = & \sum_{j,\ell } \big( - T^j_{ik,\ell} + t \sum_q T^q_{ik} T^j_{q\ell } \big) \overline{\varphi}_j \varphi_{\ell}, \\
\end{eqnarray*}
which enable us to express the equations (\ref{eq:30part}), (\ref{eq:dtheta2}), (\ref{eq:tgamma}) and (\ref{eq:main}) in terms of their components.
Hence, it yields

\begin{lemma} \label{lemma10}
Suppose that the canonical metric connection $D^r_s$ of a Hermitian manifold $(M^n,g)$ is K\"ahler-like. Then the Chern torsion components satisfy
\begin{eqnarray}
&&  {\mathfrak S}_{i,k,\ell} \{ T^j_{ik,\ell} + (3t-2) T^q_{ik} T^j_{\ell q} \} \,=\,0, \label{eq:27}\\
&& s \, \{ T^j_{ik,\ell} + t T^q_{ik} T^j_{\ell q} \} \ = \ 0, \label{eq:28}\\
&& s \,\{ T^j_{ik, \overline{\ell}} - T^{\ell}_{ik, \overline{j}} + 2(t-1) T^q_{ik}\overline{T^q_{j\ell } }    \} \, = \, 0, \label{eq:29} \\
& &  t\, \{ T^j_{ik,\ell} - T^j_{i\ell , k}  + 2(t-1) T^q_{k\ell }T^j_{iq} +t T^q_{ik }T^j_{\ell q} + t T^q_{\ell i}  T^j_{kq} \} \,=\,0, \label{eq:30} \\
&& 2(t-1)T^j_{ik,\overline{\ell}} + t (\overline{ T^i_{j\ell , \overline{k}} }- \overline{ T^k_{j\ell , \overline{i}}} ) \, = \, -2t(t-1) (w + v^j_i - v^j_k) + (t^2-s^2) (v_i^{\ell} - v_k^{\ell} ), \label{eq:31}
\end{eqnarray}
for any $i,j,k,\ell$, where the index after comma stands for covariant derivative with respect to $D^{1-t}$, $t=1-r+rs$, ${\mathfrak S}$ stands for the cyclic sum, while $w=\sum_q T^q_{ik} \overline{T^q_{j\ell}}$ and
\begin{gather*}
v^j_i = \sum_q T^j_{iq} \overline{T^k_{\ell q}},
\quad v^{\ell}_k = \sum_q T^{\ell}_{kq} \overline{T^i_{j q}},
\quad v^j_k = \sum_q T^j_{kq} \overline{T^i_{\ell q}} ,
\quad v^{\ell}_i = \sum_q T^{\ell}_{iq} \overline{T^k_{j q}},
\end{gather*}
are defined as in the proof of Lemma \ref{lemma3}.
\end{lemma}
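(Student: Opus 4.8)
The plan is to establish the five identities by expanding, at a fixed point $p$, the three conditions of Lemma~\ref{lemma9} together with the universal Chern Bianchi relation \eqref{eq:30part}, all read in the adapted unitary frame $e$ satisfying $\theta^{(t)}|_p=0$, where $t=1-r+rs$. The decisive feature of this frame is that the connection matrix of $D^{1-t}$ vanishes at $p$, so that there the comma derivatives (covariant derivatives with respect to $D^{1-t}$) coincide with the ordinary ones; in particular $\partial T^j_{ik}=\sum_\ell T^j_{ik,\ell}\,\varphi_\ell$ and $\overline{\partial}T^j_{ik}=\sum_\ell T^j_{ik,\overline{\ell}}\,\overline{\varphi}_\ell$ at $p$. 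Throughout I would use the identities $\gamma'_{ij}=\sum_k T^j_{ik}\varphi_k$ and $(\theta_2)_{ij}=\sum_k\overline{T^k_{ij}}\,\varphi_k$ from \eqref{eq:gm+tht2}, the structure relations $\partial\varphi_q=(1-t)\sum_{a,b}T^q_{ab}\varphi_a\varphi_b$ and $\overline{\partial}\varphi_q=t\sum_{a,b}\overline{T^a_{qb}}\,\varphi_a\overline{\varphi}_b$ recorded above, the antisymmetry $T^k_{ij}=-T^k_{ji}$, and the two displayed component formulas for $\partial\,\overline{(\theta_2)}_{ik}$ and $\overline{\partial}\,\overline{(\theta_2)}_{ik}$.

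First I would read off \eqref{eq:27} from \eqref{eq:30part}, which holds on any Hermitian manifold and needs no K\"ahler-like hypothesis: writing $(\gamma'\gamma')_{ij}=\sum_{q}T^q_{ik}T^j_{q\ell}\,\varphi_k\varphi_\ell$ and expanding $\partial\gamma'_{ij}=\sum_k(\partial T^j_{ik})\wedge\varphi_k+\sum_k T^j_{ik}\,\partial\varphi_k$, the quadratic weight $(1-2t)$ of $\gamma'\gamma'$ and the weight $-(1-t)$ carried by the $\partial\varphi_k$ term combine, after wedging with $\varphi_i$ and reading the totally antisymmetric $(3,0)$-coefficient, into the cyclic sum with coefficient $3t-2$, while the genuine derivative part contributes $\mathfrak{S}_{i,k,\ell}T^j_{ik,\ell}$. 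Next, the condition $\Theta^{\!D}_2=0$, equivalently (by conjugation, as $s$ is real) $s\,d\,\overline{\theta}_2=0$, splits by bidegree: its $(1,1)$-part $s\,\partial\,\overline{(\theta_2)}_{ik}=0$ yields \eqref{eq:28} at once (after rewriting $T^j_{q\ell}=-T^j_{\ell q}$), while antisymmetrizing its $(0,2)$-part $s\,\overline{\partial}\,\overline{(\theta_2)}_{ik}=0$ in the pair $(j,\ell)$ and using $\overline{T^q_{\ell j}}=-\overline{T^q_{j\ell}}$ produces \eqref{eq:29}, the factor $2$ there originating from this antisymmetrization. Finally, the condition $(\Theta^{\!D}_1)^{2,0}=0$, namely \eqref{eq:tgamma}, is treated by the same method, expanding $\partial\gamma'_{ij}+t(\gamma'\gamma')_{ij}$ together with its $\partial\varphi$ contribution and antisymmetrizing only in the two holomorphic slots of the matrix entry $(i,j)$, which gives \eqref{eq:30}.

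The hard part will be \eqref{eq:31}, coming from the remaining condition ${}^t\!\varphi\,(\Theta^{\!D}_1)^{1,1}=0$ in the reduced form \eqref{eq:main}, in which the Chern curvature $\Theta$ has already been eliminated through the Bianchi identity (this is exactly why $\overline{\partial}\gamma'$ enters with weight $t-1$ instead of $t$). Here one must expand the four groups $(t-1)\overline{\partial}\gamma'$, $-t\,\partial\,\overline{{}^t\!\gamma'}$, $-t(t-1)(\gamma'\,\overline{{}^t\!\gamma'}+\overline{{}^t\!\gamma'}\,\gamma')$ and $-s^2\,\overline{\theta}_2\theta_2$, bearing in mind that the two derivative operators themselves generate quadratic torsion terms through $\overline{\partial}\varphi$ and $\partial\,\overline{\varphi}$, each with an extra factor $t$. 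Contracting with ${}^t\!\varphi$ and collecting the coefficient of a fixed $(2,1)$-monomial, the derivative contributions assemble into the left-hand side $2(t-1)T^j_{ik,\overline{\ell}}+t(\overline{T^i_{j\ell,\overline{k}}}-\overline{T^k_{j\ell,\overline{i}}})$, while the quadratic contributions organize into the invariants $w$, $v^j$ and $v^\ell$; the point to watch is that $w$ and the $v^j$'s acquire the weight $-2t(t-1)$, whereas the $v^\ell$'s receive contributions both from the $t$-weighted quadratic pieces and from the Strominger-type term $-s^2\,\overline{\theta}_2\theta_2$, and these reorganize precisely into the coefficient $t^2-s^2$. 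Keeping the antisymmetrizations and conjugations consistent across these four groups is the only genuine difficulty; once that bookkeeping is carried out, matching coefficients yields \eqref{eq:31} and completes the proof.
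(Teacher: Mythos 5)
Your proposal is correct and follows essentially the same route as the paper: the paper also works in the unitary frame with $\theta^{(t)}|_p=0$, reduces the K\"ahler-like conditions of Lemma \ref{lemma9} to the equations \eqref{eq:dtheta2}, \eqref{eq:tgamma}, \eqref{eq:main} together with the universal Bianchi consequence \eqref{eq:30part}, and then reads off \eqref{eq:27}--\eqref{eq:31} componentwise exactly as you describe (with \eqref{eq:27} from \eqref{eq:30part}, \eqref{eq:28}/\eqref{eq:29} from the $(1,1)$- and $(0,2)$-parts of $s\,d\theta_2=0$, \eqref{eq:30} from \eqref{eq:tgamma}, and \eqref{eq:31} from \eqref{eq:main}). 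Your bookkeeping of the coefficients, in particular the combination $-(1-t)-(1-2t)=3t-2$ in \eqref{eq:27} and the split of $t^2-s^2$ between the $t$-weighted quadratic corrections and the $-s^2\,\overline{\theta}_2\theta_2$ term in \eqref{eq:31}, matches the paper's computation.
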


Let us try to get the expression for $T^j_{ik,\overline{\ell}}$. As in the proof of Lemma \ref{lemma3}, denote by
\begin{eqnarray}
x & = & T^j_{ik,\overline{\ell}} - T^{\ell}_{ik,\overline{j}}\,, \ \ \ \ \  y \ = \  \overline{ T^i_{j\ell , \overline{k} } -  T^k_{j\ell , \overline{i}}}\,, \nonumber \\
Q  & = & -2t(t-1)w - \frac{1}{2}(3t^2-2t-s^2)(v_i^j - v_k^j - v_i^{\ell} + v_k^{\ell} )\,\nonumber.
\end{eqnarray}
Interchange $j$ and $\ell$ in the last identity of Lemma \ref{lemma10} and take the difference between the two, it yields
$$ (t-1)x + t y = Q.$$
Interchange $(ik)$ with $(j\ell)$ and take the complex conjugation in the equation above, we get
$$ (t-1)y + tx = Q.$$
When $t\neq \frac{1}{2}$, it implies that $x=y=\frac{Q}{(2t-1)}$. Plug this back into the last identity of Lemma \ref{lemma10},
it yields

\begin{lemma} \label{lemma11}
Suppose that the canonical metric connection $D^r_s$ of a Hermitian manifold $(M^n,g)$ is K\"ahler-like. Then the Chern torsion components satisfy the equation
$$ 4(t-1)(2t-1)T^j_{ik,\overline{\ell}}  = -4t(t-1)^2w -t(5t^2-10t+4+s^2) (v_i^j - v_k^j) + (t^3-3s^2t+2s^2) (v_i^{\ell} - v_k^{\ell}) $$
for any $i,j,k,\ell$, where the index after comma stands for covariant derivative with respect to $D^{1-t}$ and $t=1-r+rs$.
\end{lemma}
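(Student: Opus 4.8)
The plan is to isolate $T^j_{ik,\overline{\ell}}$ from the last identity \eqref{eq:31} of Lemma \ref{lemma10}, which couples it to the ``reflected'' first-derivative combination $\overline{T^i_{j\ell,\overline{k}}}-\overline{T^k_{j\ell,\overline{i}}}$ together with the purely quadratic torsion terms $w$ and the four $v$'s. Following the template of Lemma \ref{lemma3}, I would first collect the two first-derivative combinations into $x=T^j_{ik,\overline{\ell}}-T^{\ell}_{ik,\overline{j}}$ and $y=\overline{T^i_{j\ell,\overline{k}}-T^k_{j\ell,\overline{i}}}$, and bundle the quadratic part into the single symbol $Q$. The aim is to produce two independent linear relations between $x$ and $y$, so that the system can be solved and the resulting value of $y$ substituted back into \eqref{eq:31}.

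Both relations come from exploiting the permutation symmetries of \eqref{eq:31}. First I would interchange the upper indices $j\leftrightarrow\ell$ and subtract the new identity from the original. On the left, the antisymmetry $T^q_{\ell j}=-T^q_{j\ell}$ makes the two reflected-derivative terms reinforce, producing $2(t-1)x+2ty$; on the right, tracking $w\mapsto -w$ and the induced permutation of the four $v$'s, the quadratic terms must recombine into exactly $2Q$, giving the first relation $(t-1)x+ty=Q$. For the second relation I would interchange the index pairs $(ik)\leftrightarrow(j\ell)$ and take complex conjugation; this operation swaps $x\leftrightarrow y$ and leaves $Q$ invariant (here $w\mapsto w$ and the $v$-combination entering $Q$ is fixed), yielding $tx+(t-1)y=Q$.

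Solving the $2\times2$ system is then immediate: its coefficient determinant is $(t-1)^2-t^2=-(2t-1)$, so subtracting the two relations forces $x=y$, whence $x=y=Q/(2t-1)$ whenever $t\neq\frac{1}{2}$. Substituting $y=Q/(2t-1)$ into \eqref{eq:31}, clearing denominators by multiplying through by $2(2t-1)$, and inserting the explicit form of $Q$ then delivers the stated formula; as a consistency check, the $w$-coefficient collapses to $-4t(t-1)^2$ via $-4t(t-1)(2t-1)+4t^2(t-1)$, and the two $v$-differences assemble with coefficients $-t(5t^2-10t+4+s^2)$ and $t^3-3s^2t+2s^2$. The only real obstacle is the bookkeeping in the middle step: one must verify that the quadratic right-hand side genuinely reorganizes into $2Q$ under the first operation and is fixed under the second, since a single sign slip in how $w$ or any one $v^{\bullet}_{\bullet}$ transforms would corrupt the final coefficients. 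The concluding simplification is routine but should be done carefully because of the competing $t$- and $s^2$-contributions.
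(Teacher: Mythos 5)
Your derivation is, for $t\neq\frac12$, exactly the paper's argument: the same quantities $x$, $y$, $Q$, the same two symmetry operations applied to \eqref{eq:31} (interchange of $j$ and $\ell$, then interchange of $(ik)$ with $(j\ell)$ followed by complex conjugation), the same linear system $(t-1)x+ty=Q$, $tx+(t-1)y=Q$ with solution $x=y=Q/(2t-1)$, and the back-substitution into \eqref{eq:31}. Your bookkeeping claims are all correct: under $j\leftrightarrow\ell$ one has $w\mapsto -w$, $v^j_i\leftrightarrow v^\ell_i$, $v^j_k\leftrightarrow v^\ell_k$, so the right side of the difference is $2Q$; under the swap-and-conjugate operation $x\leftrightarrow y$ while $w$ and the combination $v_i^j-v_k^j-v_i^\ell+v_k^\ell$ are fixed; and the final coefficients do collapse as you indicate, e.g.\ $-4t(t-1)(2t-1)+4t^2(t-1)=-4t(t-1)^2$.

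However, your proof only establishes the identity when $t\neq\frac12$, whereas the lemma is asserted for every K\"ahler-like $D^r_s$ with $(r,s)\in\Omega$, hence for every value of $t=1-r+rs$, including $t=\frac12$. That case is not vacuous: the left-hand side $4(t-1)(2t-1)T^j_{ik,\overline{\ell}}$ vanishes identically at $t=\frac12$, but the right-hand side does not vanish formally, so one must still prove it is zero. This is where the degeneration of your linear system is used rather than abandoned: at $t=\frac12$ the two relations read $-\frac12x+\frac12y=Q$ and $\frac12x-\frac12y=Q$, and adding them forces $Q=0$ (and $x=y$), i.e.
\[
w + \bigl(\tfrac14 + s^2\bigr)\bigl( v_i^j - v_k^j - v_i^{\ell} + v_k^{\ell} \bigr) = 0 ,
\]
which is \eqref{eq:thalf}. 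A direct substitution shows that the right-hand side of the claimed formula at $t=\frac12$ equals exactly $-Q$, hence vanishes, so the identity holds with both sides zero. This half-line of argument is precisely the content of the paper's proof of the lemma (the $t\neq\frac12$ computation having been carried out before the statement), and the identity \eqref{eq:thalf} is also what drives the $t=\frac12$ alternative in Lemma \ref{lemma12}; omitting it leaves the lemma unproved as stated.
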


\begin{proof}
The case when $t \neq \frac{1}{2}$ has been shown. When $t=\frac{1}{2}$, the linear system above about $x$ and $y$ yields $x=y$ and $Q=0$, that is
\begin{equation}
 w + (\frac{1}{4} + s^2) ( v_i^j - v_k^j - v_i^{\ell} + v_k^{\ell} ) = 0. \label{eq:thalf}
 \end{equation}
Therefore the identity in Lemma \ref{lemma11} also holds.
\end{proof}

\begin{lemma} \label{lemma12}
Suppose that the canonical metric connection $D^r_s$ of a Hermitian manifold $(M^n,g)$ is K\"ahler-like. Then it yields that,
when $t \neq \frac{1}{2}$, it holds that, for any $i$, $k$,
\begin{equation}
4s(t-1)^2 \sum_q |T^q_{ik}|^2= s (3t^2-2t -s^2) \, \sum_q \{ 2Re (T^i_{iq} \overline{ T^k_{kq}}) - |T^i_{kq}|^2 - |T^k_{iq}|^2 \},  \label{eq:key}
\end{equation}
when $t=\frac{1}{2}$, the identity above holds without the $s$ factor, namely,
\begin{equation}
\sum_q |T^q_{ik}|^2= (\frac{1}{4}+s^2) \, \sum_q \{ 2Re (T^i_{iq} \overline{ T^k_{kq}}) - |T^i_{kq}|^2 - |T^k_{iq}|^2 \}.  \label{eq:keyhalf}
\end{equation}
\end{lemma}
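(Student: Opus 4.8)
The plan is to remove the covariant derivative $T^j_{ik,\overline\ell}$ by playing the $(1,1)$-type curvature information against the $(0,2)$-type information, and then to collapse the four free indices by setting $j=i$, $\ell=k$. Recall from the discussion preceding Lemma \ref{lemma11} that, writing $x=T^j_{ik,\overline\ell}-T^\ell_{ik,\overline j}$, the K\"ahler-like condition forces $(2t-1)x=Q$ when $t\neq\frac12$, where
$Q=-2t(t-1)w-\frac12(3t^2-2t-s^2)(v_i^j-v_k^j-v_i^\ell+v_k^\ell)$, while at $t=\frac12$ the same linear system degenerates and forces $Q=0$, which is precisely \eqref{eq:thalf}. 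The second ingredient is \eqref{eq:29}, coming from the vanishing of the $(0,2)$-block $\Theta^{\!D}_2$: for $s\neq 0$ it says exactly $x=-2(t-1)w$. So I have two independent handles on the same antisymmetrized derivative $x$, and equating them kills the derivative.

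First I would dispose of $t\neq\frac12$. If $s=0$ there is nothing to prove, since both sides of \eqref{eq:key} carry the factor $s$. If $s\neq 0$, I substitute $x=-2(t-1)w$ into $(2t-1)x=Q$; the two $w$-contributions combine through the identity $(2t-1)-t=t-1$, leaving the clean relation
\[
(3t^2-2t-s^2)\,(v_i^j-v_k^j-v_i^\ell+v_k^\ell)=4(t-1)^2\,w .
\]
The point I expect to be genuinely delicate here is that $w=\sum_q T^q_{ik}\overline{T^q_{j\ell}}$ is \emph{antisymmetric} under $j\leftrightarrow\ell$ (because of $T^q_{\ell j}=-T^q_{j\ell}$), and it is exactly this antisymmetry that prevents the $w$-term in $Q$ from cancelling when one forms $x$; getting this sign right is the crux of the bookkeeping.

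The remaining work is the specialization $j=i$, $\ell=k$. Under it $w$ becomes the nonnegative quantity $\sum_q|T^q_{ik}|^2$; the term $v_k^\ell$ becomes the complex conjugate of $v_i^j$, so that $v_i^j+v_k^\ell=2\,\mathrm{Re}(v_i^j)=\sum_q 2\,\mathrm{Re}(T^i_{iq}\overline{T^k_{kq}})$; and $v_k^j$, $v_i^\ell$ become $\sum_q|T^i_{kq}|^2$ and $\sum_q|T^k_{iq}|^2$ respectively. Hence $v_i^j-v_k^j-v_i^\ell+v_k^\ell$ turns into exactly the bracket $\sum_q\{2\,\mathrm{Re}(T^i_{iq}\overline{T^k_{kq}})-|T^i_{kq}|^2-|T^k_{iq}|^2\}$ of the statement, and multiplying the specialized relation by $s$ reproduces \eqref{eq:key}.

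Finally, for $t=\frac12$ I would not invoke \eqref{eq:29} at all, since then $(2t-1)x=Q$ degenerates; instead $Q=0$ is literally \eqref{eq:thalf}, namely $w+(\frac14+s^2)(v_i^j-v_k^j-v_i^\ell+v_k^\ell)=0$, valid for every $s$. Running the same index specialization $j=i$, $\ell=k$ then yields \eqref{eq:keyhalf}, now with no $s$-factor, consistent with the fact that the $s$-weighting entered only through the $(0,2)$-equation used in the other case. Thus the entire proof is a two-line algebraic elimination followed by a careful contraction; the only real obstacle is keeping the antisymmetry of $w$ and the conjugate-symmetry $v_k^\ell=\overline{v_i^j}$ straight, after which both displayed identities of the lemma fall out.
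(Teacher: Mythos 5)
Your proof is essentially identical to the paper's: the paper likewise combines \eqref{eq:29} (i.e.\ $sx=2s(1-t)w$) with the relation $x=Q/(2t-1)$ from the linear system preceding Lemma \ref{lemma11} to obtain the intermediate identity \eqref{eq:prekey} (your ``clean relation''), then specializes $j=i$, $\ell=k$, and for $t=\tfrac12$ it, too, invokes $Q=0$, i.e.\ \eqref{eq:thalf}, with the same specialization. The only differences are cosmetic---you split off the trivial case $s=0$ and cancel the factor $s$, whereas the paper carries $s$ through---and you inherit the paper's own sign slip: specializing \eqref{eq:thalf} actually produces \eqref{eq:keyhalf} with a minus sign in front of $\bigl(\tfrac14+s^2\bigr)$, as one also sees by setting $t=\tfrac12$ in \eqref{eq:key}.
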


\begin{proof}
It follows from (\ref{eq:29}) that $sx=2s(1-t)w $. If $t\neq \frac{1}{2}$, then $x=\frac{Q}{2t-1}$, which yields that
\begin{equation}
4s(t-1)^2 w = s (3t^2-2t -s^2) \, (v_i^j - v_k^j -v_i^{\ell} + v_k^{\ell}). \label{eq:prekey}
\end{equation}
Let $i=j$ and $k=\ell$, the equation above is exactly the identity \eqref{eq:key}. When $t=\frac{1}{2}$, the linear system about $x$ and $y$ imply that $x=y$ and $Q=0$, which yields (\ref{eq:thalf}). Let $i=j$ and $k=\ell$, then (\ref{eq:keyhalf}) follows. This completes the proof of the lemma.
\end{proof}

Now we proceed to prove Theorem \ref{thm4}. This will be divided into three parts, which are the contents of Lemma \ref{lemma13}, Lemma \ref{lemma15} and Lemma \ref{lemma16} below.

\begin{lemma} \label{lemma13}
Let $(M^n,g)$ be a Hermitian manifold whose canonical metric connection $D^r_s$ is K\"ahler-like for $(r,s)\in \Omega$. Write $t=1-r+rs$. If $s\neq 0$ and $t\neq 0,1$, then $g$ is K\"ahler.
\end{lemma}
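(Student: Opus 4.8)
The plan is to exploit the fact that the hypotheses $s\neq0$ and $t\neq0,1$ (where $t=1-r+rs$) make several of the torsion identities of Lemma \ref{lemma10} nondegenerate, and to split them into a purely holomorphic family (quadratic in the torsion) and a conjugate-linear family, each of which I contract into matrix identities for the Hermitian forms $A=(A_{k\overline\ell})$ and $B=(B_{k\overline\ell})$.

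First I would produce a holomorphic identity. Since $s\neq0$, \eqref{eq:28} gives $T^j_{ik,\ell}=-t\sum_q T^q_{ik}T^j_{\ell q}$; substituting this (and its $(k,\ell)$-swap) into \eqref{eq:30}, which we may divide by $t\neq0$, all but one group of terms cancel after using the antisymmetry $T^q_{ij}=-T^q_{ji}$, leaving $2(t-1)\sum_q T^q_{ik}T^j_{\ell q}=0$. As $t\neq1$ this yields the key quadratic relation
\[ \sum_q T^q_{ik}T^j_{\ell q}=0 \qquad\text{for all } i,j,k,\ell, \]
and in turn $T^j_{ik,\ell}=0$. Contracting this relation against $\overline{T^j_{\ell m}}$ gives $\sum_q T^q_{ik}A_{q\overline m}=0$, and a further contraction against $\overline{T^m_{ik}}$ gives $\tr(AB)=0$; since $A$ and $B$ are positive semidefinite, this forces $AB=0$ (hence $BA=0$ as well).

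Next comes the conjugate-linear input: the general form of \eqref{eq:prekey}, valid for every $t$ (the $t=\tfrac12$ case being \eqref{eq:thalf}), reads $\mu\, w=\lambda(v^j_i-v^j_k-v^\ell_i+v^\ell_k)$ with $\mu=4(t-1)^2$ and $\lambda=3t^2-2t-s^2$. Contracting it over one index pair produces the Hermitian matrix identity
\[ (\mu+\lambda)A+\lambda B=\lambda\,\Phi,\qquad \Phi_{i\overline j}=\phi_i^j+\overline{\phi_j^i}. \]
If $\lambda=0$ then \eqref{eq:key} immediately gives $\sum_q|T^q_{ik}|^2=0$, so $T=0$; hence assume $\lambda\neq0$. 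The endgame is then clean \emph{once the metric is known to be balanced}: when $\eta=0$ we have $\Phi=0$, so $(\mu+\lambda)A+\lambda B=0$, and right-multiplying by $A$ while using $AB=BA=0$ gives $(\mu+\lambda)A^2=0$, whence $A=0$ if $\mu+\lambda\neq0$, while if $\mu+\lambda=0$ the identity reduces to $\lambda B=0$ and $B=0$. In either case $|T|^2=\tr A=\tr B=0$ and $g$ is K\"ahler.

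Thus the heart of the matter, and the step I expect to be the main obstacle, is proving that $g$ is balanced, i.e. $\eta=0$. Contracting the quadratic relation above with $\eta$ gives $\sum_q T^q_{ik}\eta_q=0$; assuming $\eta\neq0$ at a point and choosing the unitary frame with $\eta=|\eta|\varphi_n$ forces the entire top layer $T^n_{ik}=0$, so the $n$-th row and column of both $B$ and $\Phi$ vanish. The matrix identity at position $(n,\ell)$ then gives $(\mu+\lambda)A_{n\overline\ell}=0$, and when $\mu+\lambda\neq0$ this yields $A_{n\overline n}=\sum_{i,j}|T^i_{jn}|^2=0$, hence $\eta_n=\sum_iT^i_{in}=0$, contradicting $|\eta|>0$; therefore $\eta=0$. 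The genuine difficulty is the degenerate locus $\mu+\lambda=0$ (that is, $s^2=7t^2-10t+4$), where this positivity argument stalls: there the identity only yields $B=\Phi$ and $A\Phi=0$, and I expect one must track the diagonal torsion through an equivalence-class decomposition exactly as in the proof of Proposition \ref{GKLr=0}, ultimately forcing $\sum_\ell T^\ell_{n\ell}=0$ to contradict $|\eta|>0$.
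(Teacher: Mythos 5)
Your argument is sound on most of the parameter region, and up to the matrix identity $(\mu+\lambda)A+\lambda B=\lambda\Phi$ it is a legitimate repackaging of the paper's computations: the derivation of $\sum_q T^q_{ik}T^j_{\ell q}=0$ from \eqref{eq:28} and \eqref{eq:30}, the disposal of the case $\lambda=3t^2-2t-s^2=0$ via \eqref{eq:key}, and the endgame via $\tr(AB)=0\Rightarrow AB=0$ are all correct. But the degenerate locus you flag at the end is a genuine gap, not a technicality. The curve $\mu+\lambda=0$, i.e. $s^2=7t^2-10t+4$, is non-empty inside the admissible region: the discriminant of $7t^2-10t+4$ is $100-112<0$, so $7t^2-10t+4>0$ for every $t$, and hence for each $t\neq 0,1$ there are two admissible values $s=\pm\sqrt{7t^2-10t+4}\neq 0$ lying on this curve. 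There your balancedness argument collapses (the relations $B=\Phi$ and $A\Phi=0$ carry no positivity), and your proposed rescue --- transplanting the equivalence-class argument of Proposition \ref{GKLr=0} --- is only a hope: that argument is powered by the four-index identity \eqref{eq:RrTbarT:r=0}, which is special to the Lichnerowicz connection $D^0$ and has no counterpart established in your setting. As written, the lemma remains unproved on an entire curve of parameters.

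The paper closes exactly this hole by a mechanism you never invoke. From $\sum_q T^q_{ik}T^j_{\ell q}=0$, i.e. $A_XA_Y=0$ for the operators $A_X(e_i)=\sum_{q,j}X_qT^j_{qi}e_j$, the linear-algebra claim in the proof of Theorem 2 of \cite{YZ} produces a nonzero common kernel vector, which one may take to be $e_n$; this kills all torsion components with a \emph{lower} index $n$, namely $T^j_{qn}=0$. Then \eqref{eq:key} with $i=n$ reduces to $0=-s\,(3t^2-2t-s^2)\sum_q|T^n_{kq}|^2$, which kills the upper-index-$n$ components as well --- and this step uses only $s\neq 0$, $t\neq 1$ and $\lambda\neq 0$, with no condition whatsoever on $\mu+\lambda$. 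Restricting $A_X$ to $e_n^{\bot}$ and repeating peels off one direction at a time and yields $T=0$ uniformly, including on your degenerate curve. So the missing idea is the common-kernel-vector claim (or some substitute for it): your frame adapted to $\eta$ controls only the upper index of the torsion, and the single contracted identity $(\mu+\lambda)A+\lambda B=\lambda\Phi$ is too weak to finish when $\mu+\lambda=0$.
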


\begin{proof}
Since $s\neq 0$ and $t\neq 0,1$, by \eqref{eq:28} and \eqref{eq:30} in Lemma \ref{lemma10}, it follows that
\begin{equation}
\sum_q T^q_{ik} T^j_{\ell q} =0 \label{eq:TT}
\end{equation}
for any $i$, $j$, $k$, $\ell$. Let us denote by $A_X$ the linear transformation on $V:=T^{1,0}_pM$ associated to $X = \sum_q X_q e_q$, defined as
\[\begin{array}{crcl}
A_X:&V & \longrightarrow & V\\
&e_i & \longrightarrow & \sum\limits_{q,j} X_q T^j_{qi}e_j,\\
\end{array}\]
which is clearly independent of the choice of the unitary frame $e$. Then the equation (\ref{eq:TT}) simply means $A_XA_Y=0$ for any $X,Y\in V$. By the claim in \cite[the proof of Theorem 2]{YZ}, it yields that there exists a non-zero vector $W\in V$ such that $A_X(W) =0 $ for any $X$. Without loss of generality, we may assume that such $W$ is $e_n$. Then it follows that $T^q_{nk}=0$ for any $q$, $k$.

It is clear from (\ref{eq:key}) that $3t^2-2t-s^2= 0$ implies $T=0$, since $s\neq 0$ and $t\neq 1$. Hence, let us assume $3t^2-2t-s^2\neq 0$.
Let $i=n$ in (\ref{eq:key}) and thus it follows that that $T^n_{kq} =0$ for any $k$, $q$. This means that the direction $e_n$ plays no part in the components of the Chern torsion, namely $T^j_{ik}=0$ whenever any of the indices is $n$. Then the linear transformation $A_X$ can be restricted to the orthogonal complement $e_n^{\bot}$ of $e_n$ in $V$, and thus another direction yields, denoted by $e_{n-1}$, annihilating $A_X\big|_{e_n^{\bot}}$ for any $X$, which implies $T^{n\!-\!1}_{\, q\,k } = T^{q }_{n\!-\!1\, k}=0$ for any $q$, $k$. Repeat this argument, which yields that $T=0$, namely $g$ is K\"ahler. Therefore the proof is completed.
\end{proof}

Let us deal with the $t=1$ case, which means $r(1-s)=0$. However $s=1$ implies $r=0$ by the definition of $\Omega$. Hence, it follows that $t=1$ is actually equivalent to $r=0$, the vertical axis in $\Omega$. In other words, we are studying the K\"ahler-likeness of $D^0_s$, where $s\neq 0$. We will also assume that $s\neq  \pm 1$, as $D^0_1=\nabla$ and $D^0_{-1}=\nabla'$, and there are non-K\"ahler metric $g$ with K\"ahler-like $\nabla$ or $\nabla'$ for $n\geq3$, as mentioned in Section \ref{intro}. 



\begin{lemma} \label{lemma15}
Let $(M^n,g)$ be a Hermitian manifold whose canonical metric connection $D^0_s$ is K\"ahler-like for $s\neq 0, 1, -1$.  Then $g$ is K\"ahler.
\end{lemma}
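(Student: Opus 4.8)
The plan is to exploit the fact that $r=0$ forces $t = 1-r+rs = 1$, so that every torsion identity of Lemma~\ref{lemma10}--Lemma~\ref{lemma12} can be specialized to $t=1$; at this value the covariant-derivative terms conveniently drop out and only purely algebraic content survives. Concretely, I would first evaluate Lemma~\ref{lemma11} at $t=1$. The coefficient $4(t-1)(2t-1)$ of $T^j_{ik,\overline{\ell}}$ and the coefficient $-4t(t-1)^2$ of $w$ both vanish, and a short computation of the two remaining coefficients leaves
$$ 0 = (1-s^2)\big\{ (v^j_i - v^j_k) + (v^\ell_i - v^\ell_k)\big\}. $$
Since $s \neq \pm 1$ we may cancel $1-s^2$, and the resulting relation $(v^j_i - v^j_k) + (v^\ell_i - v^\ell_k)=0$ is \emph{identical} to equation \eqref{eq:RrTbarT:r=0}, the single hypothesis that drives the entire proof of Proposition~\ref{GKLr=0}.

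The key observation is therefore that the K\"ahler-likeness of $D^0_s$ (for any admissible $s$) reproduces exactly the algebraic torsion identity characterizing the $r=0$ Gauduchon case. Hence the combinatorial argument in the proof of Proposition~\ref{GKLr=0} applies verbatim: adapting the unitary frame so that $\eta = |\eta|\,\varphi_n$, diagonalizing $\phi$, and running the equivalence-class analysis on the skew-symmetric matrix $\{T^n_{j\ell}\}$ yields a contradiction unless $\eta \equiv 0$ everywhere. Thus $g$ is balanced, and it only remains to upgrade ``balanced'' to ``K\"ahler.''

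For this second step I would bring in the norm identity of Lemma~\ref{lemma12}. Evaluating \eqref{eq:key} at $t=1$ kills the left-hand side through the factor $(t-1)^2$ and reduces the right-hand side to $s(1-s^2)\sum_q\{2\re(T^i_{iq}\overline{T^k_{kq}}) - |T^i_{kq}|^2 - |T^k_{iq}|^2\}$. Since $s \neq 0,\pm 1$, the factor $s(1-s^2)$ is nonzero, so the bracketed sum vanishes for all $i,k$. Summing over $i$ and $k$ and using $\eta_q = \sum_i T^i_{iq}$ collapses the first term to $2|\eta|^2$ and the remaining two to $2|T|^2$, giving the pointwise identity $|T|^2 = |\eta|^2$. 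Combined with $\eta \equiv 0$ from the previous step, this forces $|T|^2 = 0$, i.e. $T \equiv 0$, so $g$ is K\"ahler.

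The genuine content is concentrated in the first step: recognizing that the $t=1$ specialization of Lemma~\ref{lemma11} collapses precisely onto \eqref{eq:RrTbarT:r=0}, which is what lets us recycle Proposition~\ref{GKLr=0} wholesale instead of redoing its delicate frame-adaptation argument. The role of the hypotheses is transparent here: $s\neq\pm 1$ is exactly what permits dividing by $1-s^2$ to extract the algebraic identity, and it must fail in general since $\nabla=D^0_1$ and $\nabla'=D^0_{-1}$ can be K\"ahler-like on non-K\"ahler manifolds, while $s\neq 0,\pm1$ guarantees $s(1-s^2)\neq 0$ so that the comparison $|T|^2=|\eta|^2$ survives. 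I do not anticipate a serious obstacle beyond carefully verifying these coefficient cancellations; the only mild subtlety is confirming that no covariant-derivative term re-enters at $t=1$, which the vanishing of $4(t-1)(2t-1)$ makes automatic.
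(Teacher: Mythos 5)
Your proposal is correct, but it takes a genuinely different and noticeably heavier route than the paper's own proof. Your computations all check out: at $t=1$ Lemma \ref{lemma11} does collapse to $(1-s^2)\{(v^j_i-v^j_k)+(v^\ell_i-v^\ell_k)\}=0$, this is literally the identity \eqref{eq:RrTbarT:r=0}, the proof of Proposition \ref{GKLr=0} uses nothing but that identity (so it can indeed be recycled verbatim to give $\eta\equiv 0$), and \eqref{eq:key} at $t=1$ with $s(1-s^2)\neq 0$ gives $|T|^2=|\eta|^2$, which finishes the argument. The paper, however, extracts strictly more information before starting the endgame: it uses \eqref{eq:29} (which at $t=1$, $s\neq 0$ forces $x=0$, hence $y=0$) together with \eqref{eq:31} (and $s^2\neq 1$) to obtain the \emph{unsymmetrized} identity $v^{\ell}_i=v^{\ell}_k$ for all $i,j,k,\ell$, namely \eqref{eq:t1}, which is finer than the summed identity you get from Lemma \ref{lemma11} alone. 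Contracting \eqref{eq:t1} gives $A=\phi^{\ast}$ and $B=\phi$, hence $A=B=\phi=\phi^{\ast}$ and $|T|^2=|\eta|^2$ in one stroke, and the contradiction is then only a few lines: adapting the frame so that $\eta=|\eta|\varphi_n$ gives $\phi^n_n=0$, hence $A_{n\overline{n}}=B_{n\overline{n}}=0$, hence $T^i_{jn}=T^n_{ij}=0$ for all $i,j$, hence $\phi\equiv 0$ and $|\eta|^2=\mathrm{tr}(\phi)=0$, a contradiction. So the paper never needs the equivalence-class analysis of Proposition \ref{GKLr=0}, nor Lemma \ref{lemma12}. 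What your approach buys is a nice structural observation --- the K\"ahler-likeness of $D^0_s$ for $s\neq 0,\pm 1$ reproduces exactly the torsion identity of the K\"ahler-like Lichnerowicz case, so that result's proof can be reused wholesale; what it costs is that by working only with the symmetrized consequence of \eqref{eq:31} you discard the antisymmetric information carried by \eqref{eq:29}, which is precisely what makes the paper's own proof short and self-contained.
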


\begin{proof}
Since $s\neq 0$ and $t=1$, the equation \eqref{eq:29} in Lemma \ref{lemma10} says that $x=0$ and thus $y=0$.
Hence, by the identity \eqref{eq:31} of Lemma \ref{lemma10}, it yields that $v^{\ell}_i = v_k^{\ell}$ since $s^2\neq 1$, namely,
\begin{equation}
 \sum_q T^{\ell }_{iq} \overline{ T^k_{jq} } = \sum_q T^{\ell }_{kq} \overline{ T^i_{jq} }  \label{eq:t1}
 \end{equation}
for any indices $i$, $j$, $k$, $\ell$.
Let $k=\ell$ and sum up in (\ref{eq:t1}) and it follows that $A_{i\overline{j}} = \overline{\phi^i_j}$ for any $i$, $j$, that is, $A=\phi^{\ast}$. Similarly, let $i=j$ and sum up in (\ref{eq:t1}), which yields $B=\phi$. Since $A$ and $B$ are Hermitian symmetric, it follows that $A=B=\phi =\phi^{\ast}$, which implies that $|T|^2=|\eta|^2$ after the trace is taken. We claim that actually it holds that $|\eta|=|T|=0$ everywhere on $M^n$.

Assume the contrary, namely, $|\eta|=|T|\neq 0$ at some point $p$ on $M^n$. This will also hold in a neighborhood of $p$. Then it enables us to modify the frame $e$ by some unitary transformation in the neighborhood of $p$, such that $\frac{\eta}{|\eta|} = \varphi_n$, yielding that $\eta_1 = \cdots =\eta_{n-1}=0$ and $\eta_n=|\eta| > 0$. It follows that $\phi^n_n=0$, which implies $A_{n\bar{n}}=B_{n\bar{n}}=0$. Therefore, $T^i_{jn}=T^n_{ij}=0$ for any $i$, $j$, which yields, under the modified frame $e$, that $\phi^j_i = T^j_{in}|\eta|=0$ for any $i$, $j$, where a contradiction $0=\mathrm{tr}(\phi)=|\eta|^2$ appears. This completes the proof.
\end{proof}

Now let us consider the $t=0$ case, which means the two branches of the hyperbola $r(1-s)=1$. The two special connections $\nabla^+$, $\nabla^-$ correspond to the case of $s=\pm2$.

\begin{lemma} \label{lemma16}
Let $(M^n,g)$ be a Hermitian manifold whose canonical metric connection $D^r_s$ is K\"ahler-like for $(r,s)\in \Omega$ such that $s\neq 0, \pm 2$ and $t=0$, where $t=1-r+rs$. Then $g$ is K\"ahler.
\end{lemma}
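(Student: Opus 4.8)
The plan is to reduce everything to Chern--torsion identities by specializing Lemma~\ref{lemma10} and Lemma~\ref{lemma12} to the locus $t=0$, to extract from them a single scalar trace identity that disposes of the range $0<s^{2}<2$ immediately, and then to settle the remaining range by proving that $g$ is balanced through a unitary frame adapted to the torsion $1$-form $\eta$.

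First I would feed $t=0$ and $s\neq0$ into Lemma~\ref{lemma10}. Equation \eqref{eq:28} then forces $T^{j}_{ik,\ell}=0$, i.e.\ the Chern covariant derivatives of the torsion in the $(1,0)$ directions all vanish; substituting this into \eqref{eq:27} leaves the cyclic first Bianchi identity $\sum_{q}\{T^{q}_{ik}T^{j}_{\ell q}+T^{q}_{k\ell}T^{j}_{iq}+T^{q}_{\ell i}T^{j}_{kq}\}=0$, which is exactly the Jacobi identity for the bracket $[e_{i},e_{k}]:=\sum_{j}T^{j}_{ik}e_{j}$, so that $(T^{1,0}_{p}M,[\cdot,\cdot])$ carries a complex Lie algebra structure. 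Equations \eqref{eq:29} and \eqref{eq:31} collapse (using $t=0$) to the purely algebraic identity of Lemma~\ref{lemma12}, which I will use as the main quadratic constraint in the form
\[
 4\sum_{q}|T^{q}_{ik}|^{2}=s^{2}\sum_{q}\Big\{|T^{i}_{kq}|^{2}+|T^{k}_{iq}|^{2}-2\,\mathrm{Re}\big(T^{i}_{iq}\overline{T^{k}_{kq}}\big)\Big\}
\]
for all $i,k$.

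Summing this identity over $i$ and $k$ yields the scalar relation $(s^{2}-2)|T|^{2}=s^{2}|\eta|^{2}$. When $0<s^{2}<2$ the left side is $\le0$ while the right side is $\ge0$, so both vanish and $g$ is K\"ahler. For $s^{2}\ge2$ (and $s^{2}\neq4$) the trace identity is no longer decisive on its own, but it shows it suffices to prove $\eta\equiv0$: once $\eta=0$ the relation reads $(s^{2}-2)|T|^{2}=0$, giving $T=0$ as soon as $s^{2}\neq2$, while the single borderline value $s^{2}=2$ is then treated separately by returning $\eta=0$ (hence $\phi=0$) into the full quadratic identity together with the Jacobi identity (note the Jacobi identity is genuinely restrictive here: e.g.\ the Heisenberg bracket violates the quadratic identity).

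Thus the heart of the matter is balancedness. Here I would mirror the proof of Proposition~\ref{GKLr=0}: assume $|\eta|>0$ at a point $p$, choose a unitary frame with $\eta=|\eta|\,e_{n}$ near $p$, and derive a contradiction by extracting from the quadratic identity and the cyclic (Jacobi) identity the orthogonality and eigenvalue relations for the skew-symmetric matrix $\{T^{n}_{j\ell}\}$, partitioning the indices into equivalence classes and concluding $\sum_{\ell}T^{\ell}_{n\ell}=0$ against $\sum_{\ell}T^{\ell}_{n\ell}=-|\eta|<0$. The main obstacle, and the feature that distinguishes this case from the Lichnerowicz case, is that every single contraction of the quadratic identity turns out to be purely Hermitian, so one does \emph{not} get the symmetries $A=B$ and $\phi=\phi^{\ast}$ for free as in Proposition~\ref{GKLr=0}; the work is to run the adapted-frame analysis directly from the full tensorial identity. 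The coefficient controlling this analysis degenerates exactly at $s^{2}=4$, i.e.\ $s=\pm2$ (the connections $\nabla^{+},\nabla^{-}$), which is consistent with the non-K\"ahler Strominger K\"ahler-like examples existing there and accounts for the hypothesis $s\neq\pm2$.
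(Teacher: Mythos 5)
Your reductions at $t=0$ are correct as far as they go: \eqref{eq:28} gives $T^j_{ik,\ell}=0$, \eqref{eq:27} then gives the cyclic (Jacobi) identity, Lemma \ref{lemma12} gives your quadratic constraint, and its trace $(s^2-2)|T|^2=s^2|\eta|^2$ does dispose of $0<s^2<2$ (a case split the paper does not need to make). But both steps that would finish the proof are gaps. First, the balancedness step --- the heart of the matter --- is only a plan, and the plan is aimed at the wrong mechanism. At $t=0$ the Jacobi identity, contracted over $j=\ell$, gives $\sum_q \eta_q T^q_{ik}=0$ (this is \eqref{eq:etaT} in the paper); in your adapted frame with $\eta=|\eta|\,\varphi_n$ this already says $T^n_{ik}=0$ for all $i,k$. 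So the skew-symmetric matrix $\{T^n_{j\ell}\}$ whose orthogonality, eigenvalue and equivalence-class structure you propose to exploit, mirroring Proposition \ref{GKLr=0}, is identically zero, and that entire analysis yields nothing. The contradiction has to come from elsewhere, namely (as in the paper) from the $k=\ell$ trace of the full quadratic identity, $s^2(\phi+\phi^{\ast}-B)=(s^2-4)A$: since $T^n_{ik}=0$ forces $\phi^n_n=B_{n\overline{n}}=0$, the hypothesis $s^2\neq 4$ gives $A_{n\overline{n}}=0$, while $|\eta|^2A_{n\overline{n}}=|\phi|^2$, whence $\phi=0$ and $|\eta|^2=\mathrm{tr}(\phi)=0$, a contradiction. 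Your sketch never identifies where $s\neq\pm 2$ enters the balancedness argument; this is where, and without it the conclusion is false, by Theorem \ref{thm3} and the existence of non-K\"ahler Strominger K\"ahler-like manifolds.

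Second, your fallback for $s^2=2$ provably cannot work: no purely algebraic argument from the quadratic identity, the Jacobi identity and $\eta=0$ forces $T=0$ there. Concretely, take $n=3$ and $T^k_{ij}=\epsilon_{ijk}$ (the structure constants of $\mathfrak{sl}(2,\mathbb{C})$). Then $T$ is skew in $i,j$, $\eta=0$, the Jacobi identity holds, and in the notation of Lemma \ref{lemma10} one checks
\[
4\sum_q T^q_{ik}\overline{T^q_{j\ell}}\;=\;-2\,\bigl(v^j_i+v^{\ell}_k-v^{\ell}_i-v^j_k\bigr),
\]
i.e.\ the full (not merely diagonal) quadratic identity at $s^2=2$ is satisfied, yet $T\neq 0$. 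What closes this case in the paper is a differential identity, not an algebraic one: tracing \eqref{eq:29} over $i=j$, $k=\ell$ gives $\chi=|T|^2$, where $\chi=\sum_k\eta_{k,\overline{k}}$; once $\eta\equiv 0$ on all of $M$, its covariant derivative $\chi$ vanishes identically, hence $T\equiv 0$, uniformly in $s$. Replacing your balancedness sketch and your $s^2=2$ discussion by these two observations would repair the argument (and, incidentally, make the $0<s^2<2$ case split unnecessary).
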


\begin{proof}
Since $s\neq 0$ and $t=0$, the equations \eqref{eq:27} and \eqref{eq:28} of Lemma \ref{lemma10} imply that
\begin{eqnarray}
T^j_{ik,\ell}&=&0, \label{eq:pT} \\
\sum_q ( T^q_{ik}T^j_{\ell q} + T^q_{\ell i}T^j_{k q} + T^q_{k\ell }T^j_{i q} ) & = &0. \label{eq:t0a}
\end{eqnarray}
for any $i$, $j$, $k$, $\ell$. Hence, let $j=\ell$ and sum up in \eqref{eq:t0a}, which yields
\begin{equation}\label{eq:etaT}
\sum_q \eta_q T^q_{ik}=0
\end{equation}
for any $i$, $k$. Also, the equations \eqref{eq:29} and \eqref{eq:31} of Lemma \ref{lemma10} imply that
\begin{eqnarray}
T^j_{ik,\bar{\ell}} &=& \frac{s^2}{2}(v^{\ell}_{i}-v^{\ell}_k) \label{eq:dbarT}   \\
4w &=&- s^2 (v_i^j + v_k^{\ell} - v_i^{\ell } - v_k^j ) \label{eq:t0}
\end{eqnarray}
for any $i$, $j$, $k$, $\ell$. Let $k=\ell$ and sum up in \eqref{eq:t0}, which yields
\begin{equation}
s^2 (\phi + \phi^{\ast }-B ) = (s^2-4) A. \label{eq:t0b}
\end{equation}
It follows that $|\eta|=\lambda$, where $\lambda$ is a global constant, since
\[|\eta|^2_{,\ell} = \sum_k \eta_{k,\ell}\overline{\eta_k}+\eta_k \overline{\eta_{k,\bar{\ell}}} =\frac{s^2}{2}(\sum_{i,k,q}\eta_kT_{iq}^k\overline{T^\ell_{iq}} - \eta_k \eta_q \overline{T^\ell_{kq}})=0,\]
by \eqref{eq:pT}, \eqref{eq:etaT} and \eqref{eq:dbarT}, and $|\eta|^2_{,\bar{\ell}}=0$ is similarly established.

We claim that $\lambda=0$. Assume the contrary, that is, $\lambda>0$, which enables us to choose the frame $e$ after some appropriate unitary transformation, such that $\eta=\lambda \varphi_n$, yielding that $\eta_1 = \cdots =\eta_{n-1}=0$ and $\eta_n=\lambda > 0$. Then it follows that $T^n_{ik}=0$ for any $i$, $k$, and thus $\phi^n_n=B_{n\overline{n}}=0$. Therefore, it yields from (\ref{eq:t0b}) and the assumption $s^2\neq 4$ that $A_{n\overline{n}} =0$. However, $\lambda^2 A_{n\overline{n}} = |\phi |^2$, which implies $\phi =0$ and thus $|\eta|^2=\lambda^2 =0$ after the trace is taken, where a contradiction appears. Therefore, the claim is established.

Now we have $\eta=0$, since $s\neq 0$ and $t=0$, by letting $i=j$ and $k=\ell$ and summing them up in the equation \eqref{eq:29} of Lemma \ref{lemma10}, we get $\chi = |T|^2$, hence $T=0$. This completes the proof of the lemma.
\end{proof}

\begin{proof}[\textbf{Proof of Theorem \ref{thm4}}]
Combining Lemma \ref{lemma13}, Lemma \ref{lemma15} and Lemma \ref{lemma16}, we get a proof of Theorem \ref{thm4}.
\end{proof}

The proof of  Theorem \ref{thm3} also follows.

\begin{proof}[\textbf{Proof of Theorem \ref{thm3}}]
Let us focus on the two special connections $\nabla^+$ and $\nabla^-$, namely, the case for $t=0$ and $s^2=4$. First assume that $\nabla^+$ (or equivalently $\nabla^-$) is K\"ahler-like. Lemma \ref{lemma10} for $t=0$ and $s^2=4$ leads to the following:
\begin{eqnarray}
T^j_{ik, \ell } \, &=& \, 0   \label{eq:skl1} \\
\sum_q ( T^q_{ik}T^j_{\ell q} + T^q_{\ell i}T^j_{k q} + T^q_{k\ell }T^j_{i q} )  \,&=&\,0 \label{eq:skl2} \\
T^j_{ik, \overline{\ell}} - T^{\ell }_{ik, \overline{j}} \, &=& \, 2w \label{eq:skl3} \\
T^j_{ik, \overline{\ell}} \, &=& \, 2 (v_i^{\ell } - v_k^{\ell} ) \label{eq:skl4}
\end{eqnarray}
for any $i$, $j$, $k$, $\ell$, where the index after comma means covariant derivative with respect to the Chern connection $\nabla^c=D^1=D^{1-t}$. From \eqref{eq:skl3} and \eqref{eq:skl4}, it follows that
\begin{equation}
w + v_i^j + v_k^{\ell } - v_i^{\ell } - v_k^j =0. \label{eq:skl5}
\end{equation}
Let us denote by $T^j_{ik|\ell}$ and $T^j_{ik|\overline{\ell}}$ the covariant derivatives with respect to the Strominger connection $\nabla^s=D^{-1}$. By (\ref{eq:deltaT}) and (\ref{eq:deltaTbar}), where $r=1$ and $r'=-1$, it yields that
$ T^j_{ik|\ell} = T^j_{ik,\ell}$ due to (\ref{eq:skl2}), and
$$ T^j_{ik|\overline{\ell}} - T^j_{ik,\overline{\ell}} = 2 (v_k^j - v_i^j - w).$$
Hence, $ T^j_{ik|\overline{\ell}}=0 $ due to (\ref{eq:skl4}) and  (\ref{eq:skl5}), which implies that $T$ is parallel with respect to $\nabla^s=D^{-1}$. This together with (\ref{eq:skl5}) says that $(M^n,g)$ is Strominger K\"ahler-like by \cite{ZZ}.

Conversely, if $(M^n,g)$ is Strominger K\"ahler-like, then it follows from \cite{ZZ} that (\ref{eq:skl2}), (\ref{eq:skl5}) and $T^j_{ik|\ell}= T^j_{ik|\overline{\ell}}=0 $ hold. Therefore, by (\ref{eq:deltaT}) and (\ref{eq:deltaTbar}), it yields (\ref{eq:skl1}) and (\ref{eq:skl4}), while (\ref{eq:skl3}) is a consequence of (\ref{eq:skl4}) and (\ref{eq:skl5}), which means, by definition, that $\nabla^+=D^{\!-\!1}_2$, or equivalently $\nabla^-=D^{\frac{1}{3}}_{\!-\!2}$, is K\"ahler-like. The proof of Theorem \ref{thm3} is completed.
\end{proof}

\begin{proof}[\textbf{Proof of Theorem \ref{thm5}}]
Let us assume that $D=D^r_s$ and $D'=D^{r'}_{s'}$ are both K\"ahler-like on a Hermitian manifold $(M^n,g)$, where $(r,s)\neq (r',s')\in \Omega$. Apparently, any of the following four K\"ahler-like pairs do not imply the K\"ahlerness of $g$
$$ \{ \nabla, \nabla'\},\quad \{ \nabla^+, \nabla^-\}, \quad \{ \nabla^+, \nabla^s\}, \quad \{ \nabla^-, \nabla^s\},$$
as there are examples of non-K\"ahler manifolds which are Riemannian K\"ahler-like or Strominger K\"ahler-like.

By Theorem \ref{thm2}, Theorem \ref{thm3}, and Theorem \ref{thm4}, we are left only with the case when $D=D^r$ is Gauduchon and $D'=\nabla$ is the Riemannian connection, where $r \neq \frac{1}{2}$ by Lemma \ref{lemma3}. It follows from Lemma \ref{lemma10} for $s=0$ and $t=1$ that the K\"ahler-likeness of  Riemannian connection $\nabla$ would imply
\begin{eqnarray}
T^j_{ik,\ell} &=& -\sum_q T^q_{ik}T^j_{\ell q} \label{eq:RKLpT}\\
T^j_{ik, \bar{\ell}} &=& T^{\ell}_{ik,\bar{j}} \label{eq:RKLdbarT}
\end{eqnarray}
for any $i$, $j$, $k$, $\ell$, where the index after comma stands for covariant derivative with respect to $D^{0}$. We will show that $g$ is K\"ahler in two cases: $r \neq 0$, and $r=0$.

When $r\neq0$, let us denote by $T^j_{ik|\ell}$ and $T^j_{ik|\overline{\ell}}$ the covariant derivatives with respect to the Gauduchon connection $D^{r}$. It follows from \eqref{eq:deltaT} and \eqref{eq:rTTcyclic} that $T_{ik|\ell}^j = T_{ik,\ell}^j$ for any $i$, $j$, $k$, $\ell$, and thus it yields, together with \eqref{eq:RKLpT} and \eqref{eq:rTderi}, that
\begin{eqnarray}
T_{ik|\ell}^j\ \ =\ \ T_{ik,\ell}^j &=& 0  \label{eq:RrpT} \\
\sum_q T_{ik}^q T^j_{\ell q} &=& 0  \label{eq:RrTT}
\end{eqnarray}
for any $i$, $j$, $k$, $\ell$. Similarly, the identity \eqref{eq:deltaTbar} implies that
\begin{equation}\label{eq:RrdbarT}
T^j_{ik| \bar{\ell}}= T^j_{ik ,\bar{\ell}} -r \{ T^j_{qk} \overline{T^i_{q\ell }} + T^j_{iq} \overline{T^k_{q\ell }}
- T^q_{ik} \overline{T^q_{j\ell }}\}
\end{equation}
for any $i$, $j$, $k$, $\ell$, which yields, together with \eqref{eq:RKLdbarT} and Lemma \ref{lemma5}, that
\[\frac{(r-1)(3r-1)}{2(2r-1)} |\eta|^2 - \frac{(r-1)^2}{2(2r-1)} |T|^2 = r |\eta|^2,\] or equivalently
\begin{equation}\label{eq:Rrchi}
(r-(\sqrt{2}-1))(r+\sqrt{2}+1) |\eta|^2 + (r-1)^2 |T|^2 = 0.
\end{equation}
Therefore, it is apparent that $r \leq -\sqrt{2}-1$ or $r \geq \sqrt{2}-1$, $r \neq 1$ implies $|T|=0$ and thus $g$ is K\"ahler.

Let us deal with the remaining cases under the condition $r \neq 0$, namely $ -\sqrt{2}-1 < r < \sqrt{2}-1$ or $ r =1$.
If $ -\sqrt{2}-1 < r < \sqrt{2}-1$ and $r\neq 0$, then by \eqref{eq:Rrchi} we have
\[|T|^2 = - \frac{(r-(\sqrt{2}-1))(r+\sqrt{2}+1)}{(r-1)^2} |\eta|^2.\]
Multiply $\bar{\eta}_{k}\bar{\eta}_{\ell}$ on both sides of \eqref{eq:RrTT} and sum $k,\ell$ up, which yields that
\[\sum_{q} \phi_i^q \phi_q^j =0\]
for any $i$, $j$. The nilpotency of the matrix $\phi$ implies that $\mathrm{tr}(\phi)=|\eta|^2=0$ and thus $|T|=0$. On the other hand, if $r=1$, it follows from \eqref{eq:Rrchi} that $|\eta|=0$. Note that $T^j_{ik|\bar{\ell}}=0$ in this case from Lemma \ref{lemma3}.
Together with \eqref{eq:RKLdbarT}, the identity \eqref{eq:RrdbarT} implies that
\[\sum_q \{T^j_{qk}\overline{T^i_{q \ell}} - T^{\ell}_{qk}\overline{T^i_{q j}}
- T^j_{qi}\overline{T^k_{q \ell}} + T^{\ell}_{q i} \overline{T^k_{q k}} - 2 T^q_{ik}\overline{T^k_{j \ell}} \} =0.\]
Let $i=j$, $k=\ell$ in the equation above and sum $i$ up, which yields that
\begin{equation} \label{eq:Trc}
\sum_{i,q}|T^k_{q i}|^2 = \sum_{i,q} |T^q_{ik}|^2
\end{equation}
for any $k$. By the same trick applied in the proof of Lemma \ref{lemma13},
\eqref{eq:RrTT} enables us to assume that $e_n$ annihilates the transformation $A_X$, yielding that $T_{nq}^i=0$ for any $q$, $i$.
Hence, it follows that $T^n_{qi}=0$ for any $q$, $i$ from \eqref{eq:Trc}, which implies that the Chern torsion $T$ has vanishing components whenever one of the indices is $n$. As in the proof of Lemma \ref{lemma13}, the trick can be repeated to deduce $T=0$ in the end. Therefore, the proof for the case when $r \neq 0$ is completed.

When $r=0$, Lemma \ref{lemma5} and \eqref{eq:RKLdbarT} imply $|\eta|^2=|T|^2$. From Proposition \ref{GKLr=0}, the K\"ahler-likeness of the Lichnerowicz connection $D^0$ implies that $\eta=0$ everywhere, yielding that $|T|=0$. In summary, the proof of Theorem \ref{thm5} is completed.
\end{proof}


\noindent\textbf{Acknowledgments.} We would like to thank mathematicians Jixiang Fu, Gabriel Khan, Kefeng Liu, Luigi Vezzoni, Bo Yang, Xiaokui Yang, Shing-Tung Yau and Xianchao Zhou for their interests and/or help. We are also indebted to the work \cite{AOUV} and \cite{Fu-Zhou} which inspired this study.

\vs

\end{document}